\newtheorem{proposition}{Proposition}
\newtheorem{lemma}{Lemma}
\newtheorem{theorem}{Theorem}
\newtheorem{remark}{Remark}
\newtheorem{assumption}{Assumption}
\begin{document}

\title{Reinforcement-learning-based Algorithms for Optimization Problems and Applications to Inverse Problems}

\author[1]{Chen Xu}
\author[2]{Yun-bin Zhao}
\author[1]{Zhipeng Lu}
\author[3,4]{Ye Zhang\footnote{Corresponding author}}
\affil[1]{Guangdong Laboratory of Machine Perception and Intelligent Computing, Shenzhen MSU-BIT University, China. 
}
\affil[2]{Shenzhen Research Institute of Big Data, Chinese University of Hong Kong, Shenzhen, China}
\affil[3]{Faculty of Computational Mathematics and Cybernetics, Shenzhen MSU-BIT University, 518172 Shenzhen, China}
\affil[4]{School of Mathematics and Statistics, Beijing Institute of Technology, 100081 Beijing, China
\authorcr Email: \{xuchen, zhipeng.lu, ye.zhang\}@smbu.edu.cn, yunbinzhao@cuhk.edu.cn}

\date{}

\maketitle

\begin{abstract}
We design a new iterative algorithm, called REINFORCE-OPT, for solving a general type of optimization problems. This algorithm parameterizes the solution search rule and iteratively updates the parameter using a reinforcement learning (RL) algorithm resembling REINFORCE. To gain a deeper understanding of the RL-based methods, we show that REINFORCE-OPT essentially solves a stochastic version of the given optimization problem, and that under standard assumptions, the searching rule parameter almost surely converges to a locally optimal value. Experiments show that REINFORCE-OPT outperforms other optimization methods such as gradient descent, the genetic algorithm, and particle swarm optimization, via its ability to escape from locally optimal solutions and its robustness to the choice of initial values. With rigorous derivations, we formally introduce the use of reinforcement learning to deal with inverse problems. By choosing specific probability models for the action-selection rule, we can also connect our approach to the conventional methods of Tikhonov regularization and iterative regularization. We take non-linear integral equations and parameter-identification problems in partial differential equations as examples to show how reinforcement learning can be applied in solving non-linear inverse problems. The numerical experiments highlight the strong performance of REINFORCE-OPT, as well as its ability to quantify uncertainty in error estimates and identify multiple solutions for ill-posed inverse problems that lack solution stability and uniqueness.
\end{abstract}
\maketitle

\section{Introduction}
\label{sec:Introduction}

In this paper, we study the theory of using a reinforcement-learning (RL) algorithm to solve the continuous optimization problem:
\begin{equation}
\label{opt}
\max_{\bm{x}\in \mathcal{H}_1} \mathcal{L}(\bm{x}),
\end{equation}
where $\mathcal{H}_1$ is a Hilbert space, and $\mathcal{L}$ is a functional acting from $\mathcal{H}_1$ to $\mathbb{R}$. To facilitate the use of RL, we reformulate the problem (\ref{opt}) as a solution-search problem within the framework of a Markov decision process (MDP) and solve it using an RL method similar to the popular REINFORCE algorithm (e.g. \cite[Eq. (13.8)]{SuttonBarto2018}). Specifically, our algorithm, named REINFORCE-OPT, parameterizes the solution search rule as $\pi_{\bm{\theta}}$ and iteratively updates the parameter $\bm{\theta}\in \mathbb{R}^d$ using the gradient ascent algorithm based on the $\bm{x}$ values obtained by Monte Carlo sampling. In this context, a search rule refers to a non-deterministic function that selects the next points to examine within $\mathcal{H}_1$.

The application of RL to optimization problems has garnered significant attention recently. For example, the search for the optimal $\pi_{\bm{\theta}}$ can be viewed as a search within the parameterized space of optimization algorithms. Based on this concept, Li and Malik  \cite{LiMalik2017} proposed a meta-learning method for continuous optimization algorithm design. Specifically, using a set $\mathcal{F}$  of randomly generated objective functions, they train the search rule $\pi_{\bm{\theta}}$ through guided policy search \cite{Levine2013}. Unlike REINFORCE-OPT, their search rule does not require retraining when dealing with a new objective function generated from the same random source as $\mathcal{F}$. However, their method relies on the gradients of the objective function as inputs to $\pi_{\bm{\theta}}$, limiting its application to nondifferentiable objectives. There are also several works that apply RL to solve combinatorial optimization problems, such as the traveling salesman problem and routing problems \cite{Bello2017, Nazari2018, Kool2019, Mazyavkina2021}, as well as works that combine RL with popular evolutionary algorithms for global optimization \cite{Seyyedabbasi2021, Seyyedabbasi2023, Drugan2019}. A good review for the recent development in this direction can be found in \cite{CLY2024}. While the basic idea behind these RL-based algorithms is similar to ours, a deeper theoretical understanding of these algorithms is often lacking. For example, the precise relationship between RL's goal of maximizing long-term returns and the original optimization problem (\ref{opt}) is not rigorously explored, nor is there a comprehensive theory on the convergence of algorithms.

Our first task is to address both of these issues. In Proposition \ref{interpret}, we prove that as the length of the search path used for each update of $\bm{\theta}$ approaches infinity, REINFORCE-OPT solves a stochastic version of the original problem (\ref{opt}) with a regularization term, i.e. 
\begin{equation}
\label{rlgoal-opt}
\max_{\bm{\theta}\in \mathbb{R}^d} \{ \mathbb{E}_{\bm{x}\sim \mu^{\pi_{\bm{\theta}}}} \left[ \mathcal{L}(\bm{x}) - \beta \lVert \bm{\theta} \rVert^2_w \right] \},
\end{equation}
where $\mu^{\pi_{\bm{\theta}}}$ is a distribution over the solution space $\mathcal{H}_1$ to be explained in Proposition \ref{interpret}, $\beta>0$ is the regularization coefficient, and the weighted Euclidean norm $\lVert \cdot \rVert_w$ is explained later in Sections \ref{MDP-setting}. Additionally, we provide an asymptotic convergence analysis of REINFORCE-OPT in this work.

In the field of optimization, REINFORCE-OPT belongs to the category of meta-heuristic methods, characterized by an iterative process of ``evaluating the $\mathcal{L}$-values for the current batch of $\bm{x}$ samples and generating the next batch based on these evaluations and the search rule''. Many popular methods fall into this category, including evolutionary algorithms like the genetic algorithm (GA) \cite[Section 3.1.9]{locatelli2013} and particle swarm optimization (PSO)  \cite[Section 3.1.5]{locatelli2013}, Bayesian optimization (BO) \cite{Frazier2018}, the estimation of distribution algorithm (EDA) \cite{HauschildPelikan2011}, and the cross-entropy (CE) method \cite{DEBOER2005}. The main difference between these methods and REINFORCE-OPT lies in the latter's parameterization of the search rule using a flexible function $\pi_{\bm{\theta}}$ (e.g., a neural network) and iterative updates to maximize the expected  $\mathcal{L}$-value of the $\bm{x}$ values sampled from $\pi_{\bm{\theta}}$. In contrast, GA and PSO (except for some variants) do not update key parameters in their search rules, such as the crossover and mutation rates in GA or the inertia weight and cognitive coefficient in PSO. BO parameterizes $\mathcal{L}$ rather than the search rule, as it is designed to handle situations where the functional form of $\mathcal{L}$ is unknown or expensive to evaluate. However, BO is impractical for handling large numbers of sampled $\bm{x}$ values due to its computational complexity, which requires at least $N^2$ evaluations of $\mathcal{L}$ \cite[Eq. (3)]{Frazier2018}, where $N$ denotes the number of sampled $\bm{x}$-values for one update of the search rule. Additionally, it is recommended that BO be applied to cases where the dimension of $\bm{x}$ does not exceed 20 \cite{Frazier2018}. EDA and CE update their search rules differently from REINFORCE-OPT. EDA constructs a probabilistic model to fit the distribution of the set $\mathcal{X}$ of $\bm{x}$-samples with larger $\mathcal{L}$-values and then re-samples based on this model. CE \cite[Algorithm 2.1]{DEBOER2005} updates the search rule parameters to increase the probabilities of samples in  $\mathcal{X}$.

Our algorithm offers several advantages over other popular methods:
\begin{enumerate}
\item \emph{No requirement for differentiability}: Unlike the RL-based algorithm in \cite{LiMalik2017} and local optimization methods (e.g., gradient ascent in $\mathcal{H}_1$), REINFORCE-OPT does not require the objective $\mathcal{L}$ to be differentiable\footnote{Note that REINFORCE-OPT performs gradient descent in the parameter space, not in the $\bm{x}$ space. Therefore, it does not rely on the derivative of $\mathcal{L}$. Specifically, the algorithm seeks to solve $\max_{\bm{\theta}} J(\bm{\theta})$ for some differentiable $J(\bm{\theta})$.}. As shown by the iteration formula in (\ref{REINFORCE}), the search path $\{\bm{x}_t\}$ produced by the updating rule $\pi_{\bm{\theta}^*}$  may escape from local optima\footnote{While our algorithm with ability to escape local optima has an advantage over local optimization methods, it may not have  an advantage over the RL-based algorithm in \cite{LiMalik2017}.} of $\mathcal{L}(\bm{x})$ (see Figures \ref{escape-local} and \ref{escape-local2D} for numerical demonstration).

\item \emph{Robustness against initial conditions}: Compared to some of the popular global optimization methods, our experiments in Section \ref{Robustness} demonstrate that REINFORCE-OPT is more robust to the choice of initial values $\bm{x}_0$ than GA, PSO and CE. Our results in Tables \ref{compare-results} and \ref{robustk6} suggest that it is difficult for these three algorithms to locate the global optimum when it lies outside the region of initial population.

\item \emph{Handling higher dimensions}: As shown in Section \ref{Ex1}, REINFORCE-OPT is capable of handling an $\bm{x}$-dimension of 64, which significantly exceeds the upper bound of 20 suggested for BO \cite{Frazier2018}.

\item \emph{Theoretical convergence}: While theoretical results on the convergence of GA for nonconcave functional forms of $\mathcal{L}$ in infinite solution spaces are lacking, this work provides an asymptotic convergence analysis for REINFORCE-OPT.

\item \emph{Limitations of GA theory}: Existing theoretical works on GA assume either a finite solution space \cite{Rudolph1994,BhandariMurthy1996,GreenhalghMarshall2000}, a concave objective $\mathcal{L}$  \cite{State1995}, or focus on general evolutionary algorithms rather than GA \cite{Rudolph1996, Rudolph2013, ChenHe2021}. We believe that the advantage of REINFORCE-OPT over GA and PSO in locating the global maximum for problem (\ref{opt}) stems from the reliable updating mechanism of gradient ascent in the $\bm{\theta}$-space.
\end{enumerate}

The second task in this work is to apply REINFORCE-OPT to solve a stochastic formulation of the following general inverse problem:
\begin{itemize} \item[ ] (\textbf{IP}) \emph{ Given an observed value of the random variable $\bm{y}^\delta$, estimate the solution $\textbf{x}_e$ of the following model
\begin{equation}
\label{inversep}
f(\textbf{x}_e) + \delta \xi = \bm{y}^\delta, 
\end{equation}
where $f: \mathcal{H}_1 \supset \mathcal{D}(f) \rightarrow \mathcal{H}_2$ is a known map between two Hilbert spaces, $\delta>0$ denotes the noise level, and $\xi$ represents an unobservable Hilbert space process\footnote{A Hilbert space process $\xi$ is a bounded linear mapping $\xi: \mathcal{H}_2 \to L^2(\Omega,\mathcal{A},P)$, where $(\Omega,\mathcal{A},P)$ is a probability space.} on $\mathcal{H}_2$.  }
\end{itemize} 
Rigorously, problem (\ref{inversep}) can be understood as follows: for each $z\in \mathcal{H}_2$, we have access to the value of $\langle\bm{y}^\delta, z \rangle$ that satisfies
\begin{equation*}
\langle f(\textbf{x}_e) , z \rangle + \delta \langle\xi, z \rangle = \langle\bm{y}^\delta, z \rangle,
\end{equation*}
where for simplicity we use $\langle \cdot, \cdot \rangle$ to denote the inner products for both Hilbert spaces. The noise term $\xi_z := \langle \xi, z\rangle$, i.e., $\xi$ can be interpreted as a random element in the algebraic dual space of $\mathcal{H}_2$. Model (\ref{inversep}) includes the standardized Gaussian white-noise model where $\xi_z \sim N(0, \|z\|^2_{\mathcal{H}_2})$ and $\text{Cov} [\xi_{z_1}, \xi_{z_2}] = \langle z_1, z_2 \rangle$ for all $z_1, z_2 \in \mathcal{H}_2$, where $\|\cdot\|$ denotes the norm in the Hilbert space.

For ill-posed problems\footnote{We refer to \cite[Def. 1.1]{Hofmann1998} and \cite[Def. 3]{Hofmann2018} for a rigorous definition of the (local) ill-posedness of a nonlinear operator equation.},  some regularization approaches are 
required to find stable approximations to the solution $\textbf{x}_e$ of inverse problem (\ref{inversep}). Specific regularization methods for similar inverse problems can be found in \cite{HarrachJahnPotthast2020,HarrachJahnPotthast2023,Jin2023IP}. Within the framework of Tikhonov regularization, the approximate solution of (\ref{inversep}) can be taken as the solution of the minimization problem
\begin{equation}
\label{Tikhonov}
\min_{\bm{x}\in \mathcal{H}_1} \|f(\bm{x}) - \bm{y}^\delta\|^2 + \alpha \Omega(\bm{x}),
\end{equation}
where $\Omega(\bm{x})$ is the regularization
term and $\alpha > 0$ denotes the regularization
parameter. However, the methods commonly used to solve (\ref{Tikhonov}) have two drawbacks: 
\begin{enumerate}
\item[(a)] The solution estimator $\bm{x}^\delta_\alpha$ is deterministic and therefore does not account for the randomness introduced by the noise $\delta \xi$ in the problem (\ref{inversep}).

\item[(b)] Difficulties arise when the objective in (\ref{Tikhonov}) is nonconvex and/or nondifferentiable; both are common in practical applications. 
\end{enumerate}

To address the two issues, we apply REINFORCE-OPT to solve (\ref{Tikhonov}), which by Proposition \ref{interpret} is equivalent to solving the following statistical formulation of (\ref{Tikhonov}) with double regularization penalties: 
\begin{equation}
\label{statisticalFormulation}
\bm{\theta}^* = \arg\max_{\bm{\theta}}  \mathbb{E}_{\bm{x}\sim \mu^{\pi_{\bm{\theta}}}} [ -\| f(\bm{x})-\bm{y}^\delta \|^2  - \alpha \Omega(\bm{x}) - \beta \lVert \bm{\theta} \rVert^2_w ],
\end{equation}
where the parameterized distribution $\mu^{\pi_{\bm{\theta}}}$ is explained in Proposition \ref{interpret}. In this new formulation, the designed regularization solution $\bm{x}$ is a random variable following the distribution $\mu^{\pi_{\bm{\theta}}}$ with parameters $\bm{\theta} = \bm{\theta}^*$. Also, our experiments in Section \ref{experiment-opt} demonstrate the ability of REINFORCE-OPT to handle nonconvex cases. In addition to addressing the above-mentioned issues (a) and (b), another advantage of REINFORCE-OPT over existing methods in the field of inverse problems is its ability to handle the case of multiple solutions of (\ref{inversep}). Commonly used supervised learning frameworks, such as neural networks, cannot be directly applied to model the multi-valued inversion map $f^{-1}$ (see, e.g., \cite{XuZhang2022,YangXuZhang2022}). This advantage will be demonstrated in  Section \ref{Simu3}.

This paper is organized as follows: Section \ref{MDP-setting} reformulates the optimization problem in (\ref{opt}) within the framework of an MDP and  explores the relationship between the goals of the reinforcement learning method and the optimization problem. In Section \ref{Main}, we present the REINFORCE-OPT algorithm and show the convergence theorem. Section \ref{RLandRegu} introduces the application of REINFORCE-OPT to inverse problems and establishes connections between this algorithm and classical regularization methods. Section \ref{experiment-opt} demonstrates the advantages of REINFORCE-OPT over other commonly used optimization methods through experimental comparisons, while Section \ref{simulation} showcases the performance of REINFORCE-OPT with numerical examples involving auto-convolution equations and inverse chromatography problems. Finally, Section \ref{Conclusion} provides concluding remarks, and additional technical proofs are included in the Appendices. All codes are available at \url{http://github.com/chen-research/REINFORCE-OPT}, with the reinforcement learning-related codes implemented using TensorFlow Agent \cite{TFAgents}.

\section{Reinforcement Learning for Optimization}
\label{MDP-setting}

The following notations will be used throughout the paper. 
\begin{longtable}{p{2cm} p{8.5cm} p{3cm}}
\midrule
\textbf{Notation}   &\textbf{Description}   &\textbf{Reference}  \\
\toprule 
$\mathbb{Z}^+$ & The set of positive integers. & \\




$\bm{y}_k^{\delta}$ & The $k^{th}$ observed sample of $\bm{y}^\delta$. & (\ref{inversep}).\\

$\Omega(\bm{x})$ & The regularization term. & (\ref{Tikhonov}).\\

$\alpha,\beta$ & The non-negative regularization coefficients. & (\ref{Tikhonov}) and (\ref{statisticalFormulation}).\\

$\pi_{\bm{\theta}}(\cdot|\bm{x})$ & A probability density of the action $\bm{a}$ given the current state $\bm{x}$. & Near (\ref{rlgoal}).\\

$\mu^{\pi_{\bm{\theta}}}$ & The invariant distribution of the Markov chain $\{\bm{x}_t\}_{t=0}^{+\infty}$ generated by following policy $\pi_{\bm{\theta}}$. &Theorem \ref{interpret}.\\

$\bm{x}_t$ & The state in step $t$ in a trajectory. & (\ref{pmeasure}).\\

$\bm{a}_t$ & The action taken in step $t$ in a trajectory. & (\ref{pmeasure}).\\

$T$ & The trajectory length $\{\bm{x}_{t}\}_{t=0}^{T-1}$. & (\ref{pmeasure}).\\

$J_T(\bm{\theta})$ & The expected sum of rewards obtained in a trajectory of length $T$ generated following the policy $\pi_{\bm{\theta}}$. & (\ref{pmeasure}).\\

$\mathbb{E}_{\bm{x}_t,\bm{a}_t}[\cdot | \pi_{\bm{\theta}}]$ & The expectation over $(\bm{x}_t,\bm{a}_t)$ given that they are the step $t$ state-action pair in a trajectory generated by $\pi_{\bm{\theta}}$. This expectation is also determined by the initial state $\bm{x}_0$. However, we consider the initial state's distribution fixed. & (\ref{pmeasure}). \\

$\overline{U(\bm{0};B_1)}$ & $\{ \bm{\theta} \mid \bm{\theta}\in\mathbb{R}^d, \| \bm{\theta}\|\leq B_1\}$. & Assumption \ref{Assump1}. \\

$\nabla$ & It denotes a partial derivative with respect to $\bm{\theta}$. & \\

$\nabla J_T(\bm{\theta})$ & $\frac{\partial J_T({\bm{\theta}})}{\partial\bm{\theta}}$. &  \\

$\hat{\nabla} J_T(\bm{\theta})$ & Sample estimate of $\nabla J_T{\bm{\theta}}$. & (\ref{REINFORCE}). \\

$g(\cdot)$ & A Lipschitz map from $\mathbb{R}^d$ to $\mathbb{R}^d$, with the Lipschitz coefficient of $L_g$. & Assumption \ref{Assump0}. \\

$M_{n+1}$ & $\hat{\nabla} J_T(\bm{\theta}_n)-\nabla J_T(\bm{\theta}_n)$. & (\ref{theta-update}). \\

$a_n$ & The updating step size for $\{\bm{\theta}_n\}$. & (\ref{REINFORCE}).\\

$s_n$ & $s_n:=\sum_{i=0}^{n-1} a_i$ for any $n>0$ and $s_0:=0$. & Lemma \ref{Lemma2.1}.\\

$[t]^-$ & $[t]^-:=\text{max}_{n\geq 0}\{ s_n | s_n\leq t \}$. & (\ref{bracket-}). \\

$[t]^+$ & $[t]^+:=\text{min}_{n\geq 0}\{ s_n | s_n\geq t \}$. &(\ref{bracket-}). \\

$\{\bm{\theta}_n\}_{n=0}^{+\infty}$ & The iterate sequence. & (\ref{theta-update}) or (\ref{REINFORCE}). \\

$\bar{\bm{\theta}}(\cdot)$ & $\bar{\bm{\theta}}(t):=\lambda_t \bm{\theta}_n + (1-\lambda_t)\bm{\theta}_{n+1}$ for $t\in [s_n,s_{n+1}]$, where $\lambda_t := \frac{s_{n+1}-t}{s_{n+1}-s_n}\in [0,1]$. & Lemma \ref{Lemma2.1}.\\

$\bm{\theta}^s(\cdot)$ & A solution to ODE (\ref{ODE-f}) with $\bm{\theta}^s(s)=\bar{\bm{\theta}}(s)$. & Lemma
\ref{Lemma2.1}.\\

$U^+$& $\{\bm{x}\in \mathbb{R}^D: \bm{x} \text{ has non-negative entries} \}$. & Below (\ref{xe}).\\
\bottomrule
\end{longtable}

Let us first reformulate problem (\ref{opt}) within an MDP framework\footnote{An MDP involves an agent that takes a series of actions in a dynamic environment, receiving rewards based on state-action pairs. Each action depends solely on the current state, independent of past states and actions, which is known as the Markov property. For more details on MDPs, see \cite[Section 3.1]{SuttonBarto2018}.}, allowing us to apply RL algorithms iteratively to solve it. Suppose that  there is an agent moving through $\mathcal{H}_1$, where its location represents the state. At each time step $t$, the agent takes a step (action $\bm{a}_t \in \mathcal{H}_1$) from the current state $\bm{x}_t$, transitioning to a new state $\bm{x}_{t+1}=\bm{x}_{t}+\bm{a}_{t}$ and receiving a reward  $R(\bm{x}_t,\bm{a}_t)$. In this framework, the approximate solution to problem (\ref{opt}) is the limit of the sequence $\{ \bm{x}_{t} \}$.

RL algorithms aim to find the $\mathbf{a}_t$-selection rule $\pi$ (time-independent), which is a map from the $\bm{x}$-space $\mathcal{S}$ to the set of probability distributions $PD(\mathcal{A})$ over the $\bm{a}$-space $\mathcal{A}$, that maximizes the expected average rewards. Specifically, after parameterizing the policy with $\pi_{\bm{\theta}}$, they aim to solve
\begin{equation}
\label{rlgoal}
\max_{\bm{\theta}\in \mathbb{R}^d} \{ J_T(\bm{\theta}) -\beta \lVert \bm{\theta} \rVert^2_w \},
\end{equation}
where $\beta>0$ is the regularization coefficient, $\lVert \cdot \rVert_w$ denotes the weighted Euclidean norm, i.e., $\lVert \bm{\theta} \rVert^2_w = \sum^d_{i=1} w_i \theta^2_i$ with positive numbers $w_i$, and\footnote{See \cite[Eq.(13.15)]{SuttonBarto2018} for the definition of a long-term performance measure. Another common way of defining the performance measure is the expected sum of discounted rewards, i.e., $J(\pi) := \mathbb{E}_{\bm{a}_t \sim \pi(\bm{x}_t)} \left[ \lim_{T\rightarrow \infty}\sum_{t=0}^T \gamma^{t} R(\bm{x}_t,\bm{a}_t)\right]$, where $\gamma\in [0,1)$ is the discount factor.} 
\begin{equation}
\label{pmeasure}
J_T(\bm{\theta}):=  \frac{1}{T} \sum_{t=0}^{T-1}\mathbb{E}_{\bm{x}_t,\bm{a}_t } \left[ R(\bm{x}_{t},\bm{a}_t) | \pi_{\bm{\theta}} \right],
\end{equation}
where
\begin{itemize}
\item $\pi_{\theta}(\cdot|\bm{x})$ is a distribution over the action space determined by the current state $\bm{x}$.
\item $\bm{x}_t \in \mathcal{S}$ denotes the $t^{th}$ step's state and $\mathcal{S}\subset \mathcal{H}_1$ is called the state space.
\item $\bm{a}_t \in \mathcal{A}$ denotes the agent's $t^{th}$ action and $\mathcal{A}\subset \mathcal{H}_1$ is called the action space. The distribution of $\bm{a}_t$ is $\pi_{\bm{\theta}}(\cdot|\bm{x}_t)$.
\item $\mathbb{E}_{\bm{x}_t,\bm{a}_t } \left[ \cdot | \pi_{\bm{\theta}} \right]$ denotes an expectation with respect to $(\bm{x}_t,\bm{a}_t)$, where $(\bm{x}_t,\bm{a}_t)$ is the $t^{th}$ step state-action pair of a trajectory generated following $\pi_{\bm{\theta}}$. Note that the distribution of the initial state $\bm{x}_0$ is independent from $\pi_{\bm{\theta}}$. It is determined by the problem environment or set by the user.
\item The transition rule from a state-action pair to the next state is
\begin{equation}
\label{iteration}
\bm{x}_{t+1} = \bm{x}_{t} + \bm{a}_{t}.
\end{equation}
We note that the transition rule can be set differently for a general problem. Nevertheless, we adopt (\ref{iteration})  throughout this paper since it is proper for the inverse problem (\ref{inversep}).
\item The reward function $R$ should be defined so that solving (\ref{rlgoal}) is consistent with finding a $\bm{a}_t$-selection rule $\pi$ ($\Delta\bm{x}_{t}$-selection rule) such that the agent will reach the minimum of $\mathcal{L}$ as soon as possible. There are multiple options for the functional form of $R$, and we choose the following simplest one:
\begin{equation}
\label{neg-R}
R(\bm{x},\bm{a}):=\mathcal{L}(\bm{x}+\bm{a}).
\end{equation}

\item $T\in\mathbb{Z}^+$ is the pre-selected trajectory length.
\end{itemize}

Therefore, RL can be viewed as a method for finding an $\bm{x}_t$-iteration rule to solve (\ref{opt}). Throughout the paper, a trajectory generated by the policy $\pi_{\bm{\theta}}$ refers to $\{(\bm{x}_t,\bm{a}_t) \}_{t=0}^{T-1}$, where $\bm{x}_0$ is sampled from a distribution that is potentially independent from $\pi_{\bm{\theta}}$, $\bm{a}_t \sim \pi_{\bm{\theta}}(\cdot|\bm{x}_t)$, and $\bm{x}_{t+1}=\bm{x}_t+\bm{a}_t$. We also call $\{\bm{x}_t\}_{t=0}^{T-1}$ a trajectory generated by $\pi_{\bm{\theta}}$. It is clear that $\{\bm{x}_t\}$ is a Markov chain.

To better understand the relationship between the RL's objective (\ref{rlgoal}) and the optimization problem (\ref{opt}), we present the following proposition, which demonstrates that the RL algorithm seeks to solve a stochastic version of (\ref{opt}). \\

\begin{proposition}
\label{interpret}
For any $\bm{\theta}\in\mathbb{R}^d$, under the conditions that the Markov chain $\{\bm{x}_t\}$ generated following the policy $\pi_{\bm{\theta}}$ has a unique invariant probability measure\footnote{Conditions that ensure the existence of a invariant probability measure of $\{\bm{x}_t\}$ can be found in Appendix C. To adapt to the conditions, we can rewrite $\bm{x}_{t+1}=\bm{x}_t+\bm{a}_t=\bm{x}_t+\bm{m}_{\bm{\theta}}(\bm{x}_t)+\bm{z}_{t+1} = f(\bm{x}_t,\bm{z}_{t+1})$, where $\bm{m}_{\bm{\theta}}(\bm{x}_t)$ denotes the mean of $\pi_{\bm{\theta}}(\cdot|\bm{x}_t)$, $\{\bm{z}_t\}$ is a sequence of i.i.d. random variables, and $\pi_{\theta}(\cdot)$ is a deterministic map.} $\mu^{\pi_{\bm{\theta}}}$, and that $\mathcal{L}(\cdot)$ is integrable with respect to the distribution $\mu_t$ of $\bm{x}_t$ for each positive integer $t$.
Then, as $T\rightarrow+\infty$, the RL's goal (\ref{rlgoal}) becomes
\begin{equation}
\label{rlgoal-infty}
\max_{\bm{\theta}} \{ \mathbb{E}_{\bm{x}\sim \mu^{\pi_{\bm{\theta}}}} \left[ \mathcal{L}(\bm{x}) \right]  - \beta \lVert \bm{\theta} \rVert^2_w \}.
\end{equation}
\end{proposition}

\begin{proof} By the property of a Markov chain's invariant probability, the probability measure $\mu_t$ of $\bm{x}_t$ converges weakly to $\mu^{\pi_{\bm{\theta}}}$ as $t\to +\infty$, which further implies that $\mu'_T:=\frac{1}{T}\sum_{t=1}^{T}\mu_t$ converges weakly to $\mu^{\pi_{\bm{\theta}}}$ as $T\to+\infty$. This result and the Portemanteau theorem (e.g., \cite[Theorem 13.16]{Klenke2013}) imply the second-to-last equality in the following derivation.
\begin{equation}
\begin{split}
\lim_{T\rightarrow +\infty} J_T(\bm{\theta})
=& \lim_{T\rightarrow +\infty} \frac{1}{T} \sum_{t=0}^{T-1}\mathbb{E}_{\bm{x}_t,\bm{a}_t } \left[ R(\bm{x}_{t},\bm{a}_t) | \pi_{\bm{\theta}} \right],\\
=& \lim_{T\rightarrow +\infty} \frac{1}{T} \sum_{t=0}^{T-1}\mathbb{E}_{\bm{x}_t,\bm{a}_t } \left[ \mathcal{L}(\bm{x}_{t+1})  | \pi_{\bm{\theta}} \right], \quad \text{by } (\ref{neg-R}), \\
=&\lim_{T\rightarrow +\infty}\frac{1}{T} \sum_{t=0}^{T-1}  \int_{\bm{x}\in\mathcal{S}} \mathcal{L}(\bm{x}) d\mu_{t+1} \\
=&\lim_{T\rightarrow +\infty}   \int_{\bm{x}\in\mathcal{S}} \mathcal{L}(\bm{x}) d\mu'_{T} \\
=&\mathbb{E}_{\bm{x}\sim \mu^{\pi_{\bm{\theta}}}}[ \mathcal{L}(\bm{x}) ], 
\label{pointwise-conv}
\end{split}
\end{equation}
which yields the required results by noting the stationary regularization term $\beta \lVert \bm{\theta} \rVert^2_w$. 
\end{proof}

\section{REINFORCE-OPT and Convergence Analysis}
\label{Main}

Before presenting the REINFORCE-OPT algorithm and the main theorem, we first revisit the policy gradient theorem.

\subsection{Gradient Estimate and Parameter Update Rule}

The well-known \emph{policy gradient theorem} provides the expression for the gradient $\nabla J_T(\bm{\theta})$. It has several versions; here, we derive the one specific to our problem, which can also be found in \cite{zhao2012}. Throughout this paper, we assume that the reward function is bounded. Starting from the initial $\bm{x}_0$ with a given distribution, let $h_T:=\{(\bm{x}_t,\bm{a}_t)\}_{t=0}^{T-1}$ denote the trajectory generated following $\pi_{\bm{\theta}}$, and let $\mathcal{H}_T$ denote the sample space of $h_T$. Also, define
$$  R(h_T):=\frac{1}{T} \sum_{t=0}^{T-1}R(\bm{x}_t,\bm{a}_t). $$
Then, from the definition of $J_T(\bm{\theta})$ in (\ref{pmeasure}), we derive that 
\begin{align*}
\nabla J_T(\bm{\theta}) &= \nabla \mathbb{E}_{h_T\sim \pi_{\bm{\theta}}} [R(h_T)] = \nabla \int_{h\in \mathcal{H}_T} p_{\bm{\theta}}(h_T) R(h_T) d h_T =\int_{h_T\in \mathcal{H}_T}  \nabla p_{\bm{\theta}}(h_T) R(h_T) d h_T \\
&=\int_{h_T\in \mathcal{H}_T}   p_{\bm{\theta}}(h_T) \nabla\ln p_{\bm{\theta}}(h_T) R(h_T) dh_T =\mathbb{E}_{h_T\sim \pi_{\bm{\theta}}} \left[\nabla\ln p_{\bm{\theta}}(h_T) R(h_T)\right]\\
&=\mathbb{E}_{h_T\sim \pi_{\bm{\theta}}} \left[R(h_T) \nabla \sum_{t=0}^{T-1}\ln \pi_{\bm{\theta}}(\bm{a}_t|\bm{x}_t) \right] =\mathbb{E}_{h_T\sim \pi_{\bm{\theta}}} \left[R(h_T)  \sum_{t=0}^{T-1}\nabla\ln \pi_{\bm{\theta}}(\bm{a}_t|\bm{x}_t) \right],
\end{align*}
where $p_{\bm{\theta}}(h_T)$ denotes the probability density of trajectory $h_T$ following policy $\pi_{\bm{\theta}}$, and the derivative and integration is interchangeable (see the third equality above) due to the Lebesgue's dominated convergence theorem (DCT) if\footnote{To adapt to DCT, view $\nabla p_{\bm{\theta}}$ as a sequence limit, i.e., $\nabla p_{\bm{\theta}}=\lim_{k\rightarrow +\infty} \frac{p_{\bm{\theta}_k}-p_{\bm{\theta}}}{\bm{\theta}_k-\bm{\theta}}$, where $\lim_{k\rightarrow +\infty}\bm{\theta}_k =\bm{\theta}$.} $\nabla p_{\bm{\theta}}(h_T)$ exists for all $h_T\in \mathcal{H}_T$. Based on the above expression, a sample estimate of $\nabla J_T(\bm{\theta})$ is straightforward, namely  
\begin{align}
\label{gradient-est}
\hat{\nabla} J_T(\bm{\theta}) := \frac{1}{L} \sum_{l=0}^{L-1} \left[R(h^l_{T})  \sum_{t=0}^{T-1}\nabla\ln \pi_{\bm{\theta}}(\bm{a}^l_t|\bm{x}^l_t) \right],
\end{align}
where $h^l_{T}$ denotes the $l^{th}$ sample of $h_{T}$.


We follow a stochastic gradient ascent rule to iteratively improve the objective function  $J_T(\bm{\theta})-\beta\|\bm{\theta}\|^2_w$ in (\ref{rlgoal}). Specifically, given the current value $\bm{\theta}_n$, we generate $L$ trajectories\footnote{Note that the initial state $\bm{x}_0$ in each trajectory is drawn from a user-specified distribution that is irrelevant to $\pi_{\bm{\theta}_n}$ and remains fixed as $n$ increases. One common choice is to set $\bm{x}_0$ as a fixed point.} following $\pi_{\bm{\theta}_n}$ and update $\bm{\theta}_n$ according to
\begin{align}
\label{REINFORCE}
\bm{\theta}_{n+1} = \bm{\theta}_n + a_n (\hat{\nabla} J_T(\bm{\theta}_n) - 2\beta \bm{\theta}_n), \quad n\in \mathbb{Z}^+\cup \{0\},
\end{align}
where $\bm{\theta}_0\in\mathbb{R}^d$ is randomly initialized, $\hat{\nabla} J_T(\bm{\theta})$ is computed according to (\ref{gradient-est}) using the generated $L$ trajectories, $a_n>0$ is the update step size, and the last term is for regularization purposes. Note that $L$ and $T$ are pre-selected by the user and do not change with $n$.

\subsection{REINFORCE-OPT}
We now describe the new algorithm, named REINFORCE-OPT, for solving the potentially nonconvex optimization problem (\ref{opt}), as shown in Algorithms \ref{alg:Framwork} and \ref{alg:Framwork1}.

\begin{algorithm}[htb]
\caption{Algorithm for training the action rule $\pi_{\bm{\theta}}$.}
\label{alg:Framwork}
\begin{algorithmic}[1]
\State{Design the structure of $\pi_{\bm{\theta}}$.}
\State{Construct the reward function $R$.}
\State{Define the distribution of the initial state $\bm{x}_0$.}
\State{Set values of the hyper-parameters $\{\mathcal{N}_{\bm{\theta}}, T, \text{H}_0, N, L, \beta\}$.}
\State{Initialize the weights in $\mathcal{N}_{\bm{\theta}}$ as $\bm{\theta}_0$, and $n=0$.}
\While{$n<N$}
    \State Follow $\pi_{\bm{\theta}_n}$ to generate $L$ length-$T$ trajectories $\{h_l^T\}_{l=0}^{L-1}$.
    \State{Compute $\bar{\bm{x}}_{t} = \frac{1}{L} \sum_{l=0}^{L-1}\bm{x}_{l,t}$ for each $t\in \{0,1,2,...,T-1\}$, where $\bm{x}_{l,t}$ is the step-$t$ state in trajectory $h_l^T$.}
    \State{Compute $\text{r}=\sum_{t=0}^{T-1} R(\bar{\bm{x}}_{t},0)$.}
    \If{$\text{r}>\text{H}_0$}
    \State break;
    \EndIf
    \State {Compute $\hat{\nabla}J_T(\bm{\theta})$ according to (\ref{gradient-est}).
    }
    \State Follow {(\ref{REINFORCE}) to update $\bm{\theta}_n$ to get $\bm{\theta}_{n+1}$. 
    }
\EndWhile
\end{algorithmic}
\end{algorithm}

\begin{algorithm}[htb]
\caption{REINFORCE-OPT for solving the optimization problem (\ref{opt}).}
\label{alg:Framwork1}
\begin{algorithmic}[1]
\State{Input the objective function $\mathcal{L}(\cdot)$.}
\State{Train the action rule $\pi_{\bm{\theta}}$ according to Algorithm \ref{alg:Framwork}.}
\State{Produce samples of approximate solution $\{\bm{x}\}$.}
\State{Construct the statistics of the approximate solution, i.e., the expectation and confidence intervals.}
\end{algorithmic}
\end{algorithm}

A brief explanation of each step of Algorithm \ref{alg:Framwork} is provided as follows:
\begin{itemize}
\item The structure of the distribution $\pi_{\bm{\theta}}$ of actions can be designed as a multivariate normal distribution, with its mean and covariance equal to the output of a neural network $\mathcal{N}_{\bm{\theta}}$ whose input is a state $\bm{x}$. $\pi_{\bm{\theta}}$ is designed in this way except the one in Section \ref{escape-from-local}.

\item The functional form of the reward function $R(\cdot,\cdot)$ can be constructed by (\ref{neg-R}). 

\item The initial state $\bm{x}_0$ is used for generating $L$ trajectories $\{h_T^l\}_{l=0}^{L-1}$ in Eq. (\ref{gradient-est}).

\item The meanings of hyper-parameters are: the architecture of neural network $\mathcal{N}_{\bm{\theta}}$, i.e., the number of hidden layers and nodes; the trajectory length $T$; the threshold $\text{H}_0$ for the stop-training rule; the prescribed total number of $\bm{\theta}$-updates $N$; the number $L$ of trajectories used for one update of $\bm{\theta}$; the regularization parameters $\alpha$ and $\beta$.

\item Note that the larger $R(\bm{x},0)$ is, the closer $\bm{x}$ is to the global maximum of $\mathcal{L}(\cdot)$. This is the reason we use r defined in line 9 of Algorithm \ref{alg:Framwork} as a performance indicator of the policy $\pi_{\bm{\theta}}$.

\end{itemize}

\subsection{Convergence Analysis}
In this section, we use the ODE method in stochastic-approximation theory to prove that REINFORCE-OPT asymptotically finds a local optimum of $\bm{\theta}$ for any pre-selected positive integers $L$ and $T$. For this purpose, we rewrite the updating rule (\ref{REINFORCE}) as
\begin{align}
\label{theta-update}
\bm{\theta}_{n+1} = \bm{\theta}_n + a_n (g(\bm{\theta}_n) + M_{n+1}), \quad n\in \mathbb{Z}^+\cup \{0\},
\end{align}
where
\begin{align}
\label{gM}
g(\bm{\theta}_n) := \nabla J_T(\bm{\theta}_n)-2\beta\bm{\theta}_n, \quad M_{n+1}:= \hat{\nabla} J_T(\bm{\theta}_n) -  \nabla J_T(\bm{\theta}_n).
\end{align}

It is worth stressing that the standard convergence analysis for stochastic gradient descent (e.g. \cite[Theorem 2]{Bottou1991}) does not apply to (\ref{theta-update}) since the distribution of the error $M_n$ may change with $n$.  In what follows, 
we first state some necessary assumptions required to show our main theoretical result, i.e. Theorem \ref{main-theorem}. Assumption \ref{Assump0} and \ref{Assump1} are standard in the convergence analysis of stochastic approximation algorithms (e.g., \cite[Section 8.1]{Borkar2022} and \cite{KarmakarBhatnagar2018,BhatnagarBorkar1998,Holzleitner2021}). The boundedness assumption (cf. Assumption \ref{Assump1}) is also common for the convergence analysis of algorithms such as stochastic gradient descent (SGD)\footnote{It is stated in \cite[Section 4.1]{Mertikopoulos2020} that a large number of researches in the literature on SGD assume that the trajectory is bounded.}.

\begin{assumption}
\label{Assump0}
\begin{enumerate}
\item $g:\mathbb{R}^{d}\rightarrow \mathbb{R}^d$ defined as in (\ref{gM}) is Lipschitz, i.e. there exists a constant $L_g>0$ such that for any $\bm{\theta}, \bm{\alpha}\in \mathbb{R}^d$ it holds that 
\begin{equation*}
\begin{split}
\lVert g(\bm{\theta}) - g(\bm{\alpha}) &\rVert \leq L_g \lVert \bm{\theta} - \bm{\alpha} \rVert. \\
\end{split}
\end{equation*}

\item $\{ M_n\}$ are integrable and satisfy, for any $n\geq 0$, 
\begin{equation*}
\mathbb{E}\left[ \lVert M_{n+1} \rVert^2 | \mathcal{F}_n  \right] \leq K(1+ \lVert\bm{\theta}_n \rVert^2),
\end{equation*}
where $K>0$ is some scalar, and the filtration $\mathcal{F}_n$ will be defined in Lemma \ref{Mn-mds} later.

\item The step sizes $\{a_n\}$ are positive scalars satisfying
\begin{equation*}
\sum_n a_n = +\infty, \quad \sum_n a^2_n < +\infty. 
\end{equation*}
\end{enumerate}
\end{assumption}

Examples of $\{a_n\}$ satisfying Point 3 above in Assumption \ref{Assump0} are $a_n = \frac{1}{n}, \frac{1}{n \log n}, \cdots$.

\begin{assumption}
\label{Assump1}
Assume that there exists $B_1>0$ such that, almost surely,
\begin{equation*}
\bm{\theta}_n \in \overline{U(\bm{0};B_1)}:=\{ \bm{\theta} \mid \bm{\theta}\in\mathbb{R}^d, \| \bm{\theta}\|\leq B_1\}, \textit{ for all } n\in\mathbb{Z}^+\cup\{0\}.
\end{equation*}
\end{assumption}

\begin{assumption}
\label{Assump2}
Assume that $\sup_{\bm{\theta}\in \overline{U(\bm{0};B_1)}}\{J_T(\bm{\theta})-\beta\|\bm{\theta} \|^2\} <+\infty$, and the set of maximum points,
$$H^*:=\{\bm{\theta}^* \in \overline{U(\bm{0};B_1)} \mid   J_T(\bm{\theta}^*)-\beta\| \bm{\theta}^* \|^2 \geq J_T(\bm{\theta})-\beta\| \bm{\theta}\|^2 \text{ for any } \bm{\theta}\in \overline{U(\bm{0};B_1)}\},$$
is non-empty and contains all the critical points of $(J_T(\bm{\theta})-\beta\| \bm{\theta}\|^2)$ in $\overline{U(\bm{0};B_1)}$.
\end{assumption}

This assumption is on the $\bm{\theta}$-space rather than the $\bm{x}$-space. Hence, it does not rule out the nonconvex optimization problems, such as the ones in Section \ref{experiment-opt}.

\begin{assumption}
\label{Assump3}
Assume that $\overline{U(\bm{0};B_1)}$ as defined in Assumption \ref{Assump1} is an invariant set for the ODE $\dot{\bm{\theta}}(t)=g(\bm{\theta}(t))$.
\end{assumption}

\begin{assumption}
\label{Assump4}
Assume that $\ln\pi_{\bm{\theta}}(\bm{a}|\bm{x})$ is differentiable with respect to $\bm{\theta}$ for any $\bm{\theta}\in \mathbb{R}^d$, any $\bm{a}\in\mathcal{A}$, and any $\bm{x}\in \mathcal{S}$.
\end{assumption}

This assumption is satisfied if $\pi_{\bm{\theta}}$ is modeled as a multivariate Gaussian distribution, with its mean and standard deviations generated by a neural network $\mathcal{N}_{\bm{\theta}}$, as is the case in most of our experiments in this work.

Before proving our main theorem, we need to establish two useful technical results.

\vskip 0.1in 

\begin{lemma}
\label{Mn-mds}
Given point 2 in Assumption \ref{Assump0}, \textit{$\{M_n\}$ defined in (\ref{theta-update}) is a martingale difference sequence with respect to the increasing $\sigma$-fields
}
\[ \mathcal{F}_n:=\sigma(\bm{\theta}_m,M_m,m \leq n), \;   n\geq0, \]
where $\sigma(\bm{\theta}_m,M_m,m \leq n)$ denotes the smallest $\sigma$-field that contains $\{\sigma(M_m)\}_{m=0}^n$ and $\{\sigma(\bm{\theta}_m)\}_{m=0}^n$.
\end{lemma}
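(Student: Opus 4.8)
The plan is to verify the two defining properties of a martingale difference sequence: that each $M_n$ is $\mathcal{F}_n$-measurable and integrable, and that $\mathbb{E}[M_{n+1}\mid\mathcal{F}_n]=0$. The measurability is essentially built into the definition of the filtration $\mathcal{F}_n=\sigma(\bm{\theta}_m,M_m,m\le n)$, so the only genuine content is the conditional-mean-zero identity together with integrability.

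First I would recall from \eqref{theta-update} that $M_{n+1}=\hat{\nabla}J_T(\bm{\theta}_n)-\nabla J_T(\bm{\theta}_n)$, where $\hat{\nabla}J_T(\bm{\theta}_n)$ is the sample estimate \eqref{gradient-est} computed from the $L$ trajectories $\{h_T^l\}_{l=0}^{L-1}$ generated by following $\pi_{\bm{\theta}_n}$. The key observation is that, conditional on $\mathcal{F}_n$ (equivalently, conditional on $\bm{\theta}_n$, since $\bm{\theta}_n$ is $\mathcal{F}_n$-measurable and the trajectories are drawn independently of the past given $\bm{\theta}_n$), the $L$ trajectories are i.i.d.\ samples whose distribution is exactly $p_{\bm{\theta}_n}$. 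Hence taking the conditional expectation of \eqref{gradient-est} and using the policy gradient theorem derived earlier in the excerpt, namely $\nabla J_T(\bm{\theta})=\mathbb{E}_{h_T\sim\pi_{\bm{\theta}}}\big[R(h_T)\sum_{t=0}^{T-1}\nabla\ln\pi_{\bm{\theta}}(\bm{a}_t|\bm{x}_t)\big]$, gives
\begin{equation*}
\mathbb{E}\big[\hat{\nabla}J_T(\bm{\theta}_n)\,\big|\,\mathcal{F}_n\big]=\frac{1}{L}\sum_{l=0}^{L-1}\mathbb{E}\Big[R(h_T^l)\sum_{t=0}^{T-1}\nabla\ln\pi_{\bm{\theta}_n}(\bm{a}_t^l|\bm{x}_t^l)\,\Big|\,\bm{\theta}_n\Big]=\nabla J_T(\bm{\theta}_n),
\end{equation*}
so that $\mathbb{E}[M_{n+1}\mid\mathcal{F}_n]=\nabla J_T(\bm{\theta}_n)-\nabla J_T(\bm{\theta}_n)=0$. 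Integrability of $M_{n+1}$ then follows from point 2 of Assumption~\ref{Assump0} (which gives $\mathbb{E}[\|M_{n+1}\|^2\mid\mathcal{F}_n]<\infty$, hence $\mathbb{E}\|M_{n+1}\|<\infty$ once one notes $\bm{\theta}_n$ is bounded, or directly from the stated integrability of $\{M_n\}$ in that assumption) together with the boundedness of the reward $R$ and of $\nabla\ln\pi_{\bm{\theta}}$ implicit in the policy-gradient derivation.

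The main obstacle — really the only subtle point — is making rigorous the claim that, conditional on $\mathcal{F}_n$, the $L$ trajectories used to form $\hat{\nabla}J_T(\bm{\theta}_n)$ are genuine samples from $p_{\bm{\theta}_n}$ and are independent of $\mathcal{F}_n$ beyond their dependence through $\bm{\theta}_n$. This requires being careful about the probability space on which everything is defined: one must argue that the randomness used to roll out the trajectories at step $n$ (the initial state $\bm{x}_0$, drawn from a fixed user-specified distribution independent of $\pi_{\bm{\theta}_n}$, and the action noise) is independent of $\mathcal{F}_n$, so that the conditional law of $h_T^l$ given $\mathcal{F}_n$ is exactly $p_{\bm{\theta}_n}$. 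Once this is set up, the interchange of conditional expectation with the finite sum over $l$ and $t$ is routine, and applying the already-proven policy gradient identity closes the argument.
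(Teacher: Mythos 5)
Your proposal is correct and takes essentially the same route as the paper: both arguments reduce to the single key identity $\mathbb{E}[\hat{\nabla}J_T(\bm{\theta}_n)\mid\mathcal{F}_n]=\nabla J_T(\bm{\theta}_n)$, i.e.\ the conditional unbiasedness of the REINFORCE gradient estimator given $\bm{\theta}_n$, the paper merely packaging this as the statement that the partial sums $U_n=\sum_{k\le n}M_k$ form a martingale. If anything, you supply more justification for that key step (the conditional law of the rollout trajectories being $p_{\bm{\theta}_n}$) than the paper does, which simply writes $\mathbb{E}[\hat{\nabla}J_T(\bm{\theta}_n)-\nabla J_T(\bm{\theta}_n)\mid\mathcal{F}_n]=\mathbb{E}[\bm{0}\mid\mathcal{F}_n]$.
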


\begin{proof}
It suffices to prove that $U_n:=\sum_{k=0}^nM_k$ is a martingale with respect to $\mathcal{F}_n$. First, it is clear that $U_n$ is integrable under point 2 of Assumption \ref{Assump0}. Second, $U_n$ is $\mathcal{F}_n$-measurable since it is the sum of a finite number of $\mathcal{F}_n$-measurable mappings (see \cite[Theorem 1.91]{Klenke2013}). Finally,
\begin{equation*}
\begin{split}
\mathbb{E}[U_{n+1} | \mathcal{F}_n ] &= \mathbb{E}[M_{n+1} | \mathcal{F}_n ] + \mathbb{E}[U_{n} | \mathcal{F}_n ] \\
&= \mathbb{E}[M_{n+1} | \mathcal{F}_n ] + U_{n}\mathbb{E}[\mathbf{1} | \mathcal{F}_n], \quad \text{ since }U_n\text{ is }\mathcal{F}_n\text{-measurable}, \\
&= \mathbb{E}[ \;\hat{\nabla}J_T(\bm{\theta}_{n})-\nabla J_T(\bm{\theta}_{n})  \;|  \mathcal{F}_n ]  + U_{n}, \quad\text{according to the definition of }M_{n+1},\\
&= \mathbb{E} [ \textbf{0} |\mathcal{F}_n] + U_{n}, \\
&= U_{n}.
\end{split}
\end{equation*}
Therefore, $\{U_n\}$ satisfies the conditions required for being a martingale. Hence, $M_n = U_n-U_{n-1}$ is a martingale difference sequence. 
\end{proof}

\vskip 0.1in

\begin{lemma}
\label{Lemma2.1}
\textit{Under Assumptions \ref{Assump0}, \ref{Assump1}, and \ref{Assump3}, for any $\Delta>0$ we have}
\begin{equation}
\label{Lemma2.1-eq}
\lim_{s\rightarrow+\infty} \left( \sup_{t\in[s,s+\Delta]}\lVert \bar{\bm{\theta}}(t) - \bm{\theta}^s(t) \rVert  \right) = 0, \quad a.s.,
\end{equation}
\textit{where}

\begin{enumerate}
 \item[\emph{1.}] \textit{$\bar{\bm{\theta}}(\cdot):[0,+\infty)\rightarrow \mathbb{R}^l$ is the continuous-time linear interpolation for $\{\bm{\theta}_n\}_{n=0}^{\infty}$, defined as $\bar{\bm{\theta}}(t):=\lambda_t \bm{\theta}_n + (1-\lambda_t)\bm{\theta}_{n+1}$ for $t\in [s_n,s_{n+1}]$, where $s_n :=\sum_{i=0}^{n-1} a_i$ for any $n>0$, $s_0:=0$, and $$\lambda_t := \frac{s_{n+1}-t}{s_{n+1}-s_n}\in [0,1].$$ }

\item[\emph{2.}]  $\{\bm{\theta}_n\}_{n=0}^{+\infty}$ is the sequence produced by the iteration (\ref{theta-update}) (the same as (\ref{REINFORCE})).

\item[\emph{3.}] For any $s\geq 0$, $\bm{\theta}^s(t)$ denotes a solution to the following ODE, with $\bm{\theta}^s(s)=\bar{\bm{\theta}}(s)$:
\begin{equation}
\label{ODE-f}
\dot{\bm{\theta}}(t) = g(\bm{\theta}(t)), \quad t\geq s.
\end{equation}
\end{enumerate}
\end{lemma}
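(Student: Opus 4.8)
The plan is to recognize \cref{Lemma2.1-eq} as the standard ``interpolated trajectory tracks the ODE flow'' estimate from the ODE method in stochastic approximation (as in \cite[Section 8.1]{Borkar2022}), and to adapt the classical argument to our setting using Assumptions \ref{Assump0}, \ref{Assump1}, \ref{Assump3}. First I would fix $\Delta>0$ and, for a given $s\geq 0$, pick the index $n(s)$ with $s\in[s_{n(s)},s_{n(s)+1})$. On the interval $[s,s+\Delta]$ the interpolated process $\bar{\bm{\theta}}$ visits finitely many nodes $s_{n(s)},\dots,s_{m(s)}$, and between consecutive nodes it is affine; the key identity is that, at the nodes, $\bar{\bm{\theta}}(s_{n+1})-\bar{\bm{\theta}}(s_n) = a_n\bigl(g(\bm{\theta}_n)+M_{n+1}\bigr)$, which after telescoping gives
\begin{equation*}
\bar{\bm{\theta}}(s_{m}) = \bar{\bm{\theta}}(s_{n(s)}) + \sum_{k=n(s)}^{m-1} a_k\, g(\bm{\theta}_k) + \zeta_{n(s),m},\qquad \zeta_{n(s),m}:=\sum_{k=n(s)}^{m-1} a_k M_{k+1},
\end{equation*}
while the ODE solution satisfies $\bm{\theta}^s(s_m) = \bar{\bm{\theta}}(s) + \int_s^{s_m} g(\bm{\theta}^s(\tau))\,d\tau$. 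Subtracting, writing the discrete sum $\sum a_k g(\bm{\theta}_k)$ as an integral of the piecewise-constant interpolant of $g(\bm{\theta}_k)$, and splitting the discrepancy into (i) the martingale noise term $\zeta$, (ii) the error between $g$ evaluated at the piecewise-constant versus piecewise-linear interpolant (controlled by $L_g$ from Assumption \ref{Assump0} together with the uniform bound $B_1$ from Assumption \ref{Assump1}, since $g$ is then bounded on $\overline{U(\bm 0;B_1)}$ and the step sizes $a_n\to 0$), and (iii) a Gr\"onwall-type feedback term $L_g\int_s^{\cdot}\|\bar{\bm{\theta}}-\bm{\theta}^s\|$, one arrives at a bound of the form
\begin{equation*}
\sup_{t\in[s,s+\Delta]}\|\bar{\bm{\theta}}(t)-\bm{\theta}^s(t)\| \;\le\; C_\Delta\Bigl(\,\sup_{n(s)\le m\le m(s)}\|\zeta_{n(s),m}\| + \rho(s)\Bigr)\,e^{L_g\Delta},
\end{equation*}
where $\rho(s)\to 0$ deterministically because $\max_{k\ge n(s)} a_k\to 0$, and $C_\Delta$ depends only on $\Delta$, $L_g$, and $B_1$. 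Here Assumption \ref{Assump3} is used to guarantee that $\bm{\theta}^s(t)$ stays in $\overline{U(\bm 0;B_1)}$ for $t\in[s,s+\Delta]$, so that the Lipschitz and boundedness bounds for $g$ apply along the ODE trajectory as well.

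The remaining point is to show the noise contribution vanishes almost surely as $s\to+\infty$, i.e.\ $\sup_{m\ge n(s)}\|\zeta_{n(s),m}\|\to 0$ a.s. For this I would invoke Lemma \ref{Mn-mds}: $\{M_n\}$ is a martingale difference sequence with respect to $\{\mathcal{F}_n\}$, hence $W_n:=\sum_{k=0}^{n-1} a_k M_{k+1}$ is a martingale. Its increments have conditional second moments bounded by $a_k^2\,\mathbb{E}[\|M_{k+1}\|^2\mid\mathcal{F}_k]\le a_k^2 K(1+\|\bm{\theta}_k\|^2)\le a_k^2 K(1+B_1^2)$ using point 2 of Assumption \ref{Assump0} and Assumption \ref{Assump1}; since $\sum_n a_n^2<+\infty$ (point 3 of Assumption \ref{Assump0}), the martingale $\{W_n\}$ is $L^2$-bounded, so by the martingale convergence theorem it converges a.s.\ to some finite limit $W_\infty$. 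A Cauchy sequence argument then gives $\sup_{m\ge n}\|W_m-W_n\|\to 0$ a.s.\ as $n\to\infty$, and since $\zeta_{n(s),m}=W_m-W_{n(s)}$ with $n(s)\to\infty$ as $s\to\infty$, we get $\sup_{m\ge n(s)}\|\zeta_{n(s),m}\|\to 0$ a.s. Combining with the deterministic decay of $\rho(s)$ yields \cref{Lemma2.1-eq}.

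The main obstacle — or at least the step requiring the most care — is the Gr\"onwall bookkeeping in the first paragraph: one must carefully separate the three error sources on an interval that does not begin at a grid node $s_n$ (so the first sub-interval $[s,s_{n(s)+1}]$ needs special handling), handle the supremum over all $t\in[s,s+\Delta]$ rather than just at the nodes (using that both $\bar{\bm\theta}$ and $\bm\theta^s$ have uniformly bounded ``velocities'' on the relevant region, the latter because $g$ is bounded on $\overline{U(\bm 0;B_1)}$), and verify that the constant $C_\Delta e^{L_g\Delta}$ genuinely does not depend on $s$. The probabilistic ingredient, by contrast, is routine once Lemma \ref{Mn-mds} and Assumptions \ref{Assump0}(2)--(3), \ref{Assump1} are in hand. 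I would also note explicitly where continuity/measurability of $g$ (Lipschitz, hence continuous) is needed to make the integral representation of the discrete sum legitimate.
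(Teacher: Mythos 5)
Your proposal is correct and follows essentially the same route as the paper: telescope the update to compare $\bar{\bm{\theta}}$ at the grid points with the ODE flow, split the discrepancy into a discretization error (controlled by $L_g$, $B_g$, and $\sum_k a_{n+k}^2$ via Assumptions \ref{Assump0}, \ref{Assump1}, \ref{Assump3}), a Gr\"onwall feedback term, and the martingale noise $\sum a_k M_{k+1}$, which is shown to vanish a.s.\ by the $L^2$-bounded martingale convergence theorem plus the Cauchy criterion. The only organizational difference is that the paper first reduces general $s$ to grid points $s_n$ (Appendix 2, via continuous dependence of ODE solutions on initial data) and then runs the Gr\"onwall argument only from nodes, whereas you run it directly from arbitrary $s$ and handle the initial partial sub-interval in place; both are standard and equivalent.
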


\begin{remark}
Note that $\bar{\bm{\theta}}(t)$ and $\bm{\theta}^s(t)$ start from the same point at time $t=s$ and are close to each other only in a finite period $[s,s+\Delta]$. The reason they are not close to each other in the entire time interval $[s,+\infty)$ is that the noise $\{M_{n}\}_{n=1}^{+\infty}$ is a martingale difference sequence rather than a sequence of i.i.d. mean-zero variables. We define some symbols that are frequently used in the proof:
\begin{itemize}
\item For any $t\geq 0$, define
\begin{equation}
\label{bracket-}
[t]^-:=\text{max}_{n\geq 0}\{ s_n | s_n\leq t \} \text{ and } [t]^+:=\text{min}_{n\geq 0}\{ s_n | s_n\geq t \}.
\end{equation}
\item Let $m_{\Delta}$ denote the non-negative integer such that
\begin{equation}
\label{mDelta}
s_{n+m_{\Delta}}:= [s_{n} + \Delta]^-.
\end{equation}
\item $B_g:=\sup_{\bm{\theta}\in \overline{U(\bm{0};B_1)}}\|g(\bm{\theta})\|\in [0,+\infty)$.
\end{itemize}
\end{remark}

\vskip 0.1in
 
\begin{proof}[Proof of Lemma \ref{Lemma2.1}] The proof of Lemma \ref{Lemma2.1} consists of 6 steps. 

\textbf{Step 1}. We prove that Eq. (\ref{Lemma2.1-eq}) can be implied from the fact that for any fixed $\Delta>0$,
\begin{equation}
\label{Lemma2.1a}
\lim_{n\rightarrow+\infty} \left( \sup_{t\in[s_n,[s_n+\Delta]^-]}\lVert \bar{\bm{\theta}}(t) - \bm{\theta}^{s_n}(t) \rVert  \right) = 0, \quad a.s.
\end{equation}
The proof of this implication is given in Appendix B. Thus it is sufficient to prove Eq. (\ref{Lemma2.1a}). To this goal, we will perform the next few steps (steps 2-5) to show that for any fixed and sufficiently large integer $n$ we have
\begin{equation}
\label{preview-eq}
\sup_{t\in[s_n,[s_n+\Delta]^-]}\lVert \bar{\bm{\theta}}(t) - \bm{\theta}^{s_n}(t) \rVert \leq K_n e^{L_g\Delta}+B_g \sup_{k\geq 0}a_{n+k},
\end{equation}
where $K_n$ is as defined in Eq. (\ref{Kn-def}). Based on this fact, we eventually prove in Step 6 that $\lim_{n\rightarrow +\infty} K_n = 0, a.s$. In addition, point 3 of the assumption \ref{Assump0} implies that $\lim_{n\rightarrow \infty} a_n = 0$, which further implies $\lim_{n\rightarrow +\infty} B_g \sup_{k\geq }a_{n+k} = 0$. Therefore, the right-hand side of (\ref{preview-eq}) approaches 0 almost surely as $n\rightarrow +\infty$, which further implies (\ref{Lemma2.1a}).

 Note that $\Delta>0$ is considered fixed. Since $\lim_{n\rightarrow \infty} a_n=0$, there exists $N_{\Delta}>0$ such that whenever $n>N_{\Delta}$ we have $a_n=s_{n+1}-s_n<\Delta \Rightarrow s_{n+1}<s_n+\Delta$, which further implies $m_{\Delta}\geq 1$. In step 2-5, $n$ is a fixed positive integer greater than $N_\Delta$.

\textbf{Step 2}. For any $m\in\mathbb{Z}^+$, we rewrite $\bar{\bm{\theta}}(s_{n+m})$ as
\begin{equation*}
\bar{\bm{\theta}}(s_{n+m}) = \bar{\bm{\theta}}(s_{n}) + \sum_{k=0}^{m-1}a_{n+k}g( \bar{\bm{\theta}}(s_{n+k}) ) + \delta_{n,n+m},
\end{equation*}
where $\delta_{n,n+m}=\sum_{i=0}^{m-1} a_{n+i}M_{n+i+1}$. This equality holds because $\bar{\bm{\theta}}(s_{k})=\bm{\theta}_k$ for any non-negative integer $k$, and the $\bm{\theta}$-updating algorithm specified in (\ref{theta-update}).

Using the definition of $\bm{\theta}^{s}(\cdot)$ in (\ref{ODE-f}), we rewrite $\bm{\theta}^{s_n}(s_{n+m})$ as
\begin{equation*}
\begin{split}
\bm{\theta}^{s_n}(s_{n+m}) &= \bm{\theta}^{s_n}(s_{n}) + \int_{s_n}^{s_{n+m}} g(\bm{\theta}^{s_n}(t)) dt\\
&= \bar{\bm{\theta}}(s_{n}) + \int_{s_n}^{s_{n+m}} g(\bm{\theta}^{s_n}(t)) dt, \quad\text{ since } \bm{\theta}^{s_n}(s_{n})=\bar{\bm{\theta}}(s_{n}),\\
&= \bar{\bm{\theta}}(s_{n}) + \sum_{k=0}^{m-1} a_{n+k}g(\bm{\theta}^{s_n}(s_{n+k}))+\int_{s_n}^{s_{n+m}}\left[ g(\bm{\theta}^{s_n}(y)) - g(\bm{\theta}^{s_n}( [y]^- ) )  \right] dy.
\end{split}
\end{equation*}

\textbf{Step 3.} We take the difference between the two equations in Step 2, and then prove that for any fixed $n\in \mathbb{Z}^+$,
\begin{equation}
\label{diff-bound}
\lVert  \bar{\bm{\theta}}(s_{n+m}) - \bm{\theta}^{s_n}(s_{n+m}) \rVert \leq L_g \sum_{i=0}^{m-1} a_{n+i} \lVert \bar{\bm{\theta}}(s_{n+i}) - \bm{\theta}^{s_n} (s_{n+i}) \rVert + K_n, \text{ for all } m\in \mathbb{Z}^+,
\end{equation}
where
\begin{equation}
\label{Kn-def}
K_n := B_g L_g \sum_{k=0}^\infty a^2_{n+k} + \sup_{j\in \mathbb{Z}^+}\lVert \delta_{n,n+j} \rVert.
\end{equation}
Recall that $B_g:=\sup_{\bm{\theta}\in \overline{U(\bm{0};B_1)}}\|g(\bm{\theta})\|\in [0,+\infty)$, and note that $\sum_{k=0}^\infty a^2_{n+k}<\infty$ due to Point 3 of Assumption \ref{Assump0}. Taking the difference between the two equations in Step 2 gives
\begin{equation}
\begin{split}
\lVert  \bar{\bm{\theta}}(s_{n+m}) - \bm{\theta}^{s_n}(s_{n+m}) \rVert &= \lVert  \sum_{k=0}^{m-1} a_{n+k}\left[ g( \bar{\bm{\theta}}(s_{n+k})) - g( \bm{\theta}^{s_n} (s_{n+k}) )  \right] \\
 &- \int_{s_n}^{s_{n+m}} \left[ g( \bm{\theta}^{s_n} (y) ) -  g( \bm{\theta}^{s_n} ([y]^-) )   \right] + \delta_{n,n+m} \rVert \\
&\leq L_g \sum_{k=0}^{m-1} a_{n+k} \lVert \bar{\bm{\theta}}(s_{n+k}) - \bm{\theta}^{s_n}(s_{n+k})  \rVert \\
&+ \int_{s_n}^{s_{n+m}} \lVert g( \bm{\theta}^{s_n}(y)) - g(\bm{\theta}^{s_n}([y]^-))  \rVert dy +\sup_{j\in \mathbb{Z}^+}\lVert \delta_{n,n+j} \rVert,
\end{split}
\end{equation}
where the property\footnote{For any integrable function $h:\mathbb{R}\rightarrow \mathbb{R}^l$ and any $a<b$, we have $\|\int_{a}^b h(t) dt \|\leq \int_{a}^b \|h(t)\|dt$. For the proof, define $v:=\int_{a}^b h(t) dt$. Then,
$$\|v\|^2 = v^{\text{T}}v = \int_{a}^b v^{\text{T}} h(t) dt \leq \int_{a}^b \|v\|\cdot \|h(t)\| dt.$$
Dividing both the left  and the right  by $\| v\|$ gives the desired inequality.} that the norm of an integral is no greater than the integral of the integrand's norm is used to derive the inequality. We now bound the second term in the above:
\begin{equation*}
\begin{split}
\int_{s_n}^{s_{n+m}} \lVert g( \bm{\theta}^{s_n}(y)) - g(\bm{\theta}^{s_n}([y]^-))  \rVert dy &= \sum_{k=0}^{m-1}\int_{s_{n+k}}^{s_{n+k+1}} \lVert g( \bm{\theta}^{s_n}(y)) - g(\bm{\theta}^{s_n}([y]^-))  \rVert dy \\
&= \sum_{k=0}^{m-1}\int_{s_{n+k}}^{s_{n+k+1}} \lVert g( \bm{\theta}^{s_n}(y)) - g(\bm{\theta}^{s_n}(s_{n+k}))  \rVert dy \\
& \leq \sum_{k=0}^{m-1}\int_{s_{n+k}}^{s_{n+k+1}} L_g \lVert \bm{\theta}^{s_n}(y) - \bm{\theta}^{s_n}(s_{n+k})  \rVert dy\\
&\leq \sum_{k=0}^{m-1}\int_{s_{n+k}}^{s_{n+k+1}} L_g \lVert g(\bm{\theta}^{s_n}(\tau_y))(y-s_{n+k})  \rVert dy, 
\end{split}
\end{equation*}
where the inequality in the fourth line is from the mean value theorem (MVT). Since $\|\bm{\theta}^{s_n}(\tau_y)\|\leq B_1$ (because $\overline{U(\bm{0};B_1)}$ is an invariant set for the ODE $\dot{\bm{\theta}}(t)=g(x(t))$ from Assumption \ref{Assump3}, $\tau_y\geq s_n$, and $\bm{\theta}^{s_n}(s_n)=\bm{\theta}_n\in \overline{U(\bm{0};B_1)}$), we have that 
\begin{equation*}
\begin{split}
\int_{s_n}^{s_{n+m}} \lVert g( \bm{\theta}^{s_n}(y)) - g(\bm{\theta}^{s_n}([y]^-))  \rVert dy  
&\leq L_g B_g \sum_{k=0}^{m-1}\int_{s_{n+k}}^{s_{n+k+1}}  (y-s_{n+k})  dy\\
&\leq L_g B_g \sum_{k=0}^{m-1}\int_{s_{n+k}}^{s_{n+k+1}}  (s_{n+k+1}-s_{n+k})dy\\
&\leq L_g B_g \sum_{k=0}^{m-1} a^2_{n+k},
\end{split}
\end{equation*}
This finishes the proof of Eq. (\ref{diff-bound}).

\textbf{Step 4.} In accordance with the discrete Gronwall inequality (see Lemma \ref{DiscreteGronwal} in Appendix A), Eq. (\ref{diff-bound}) implies that, for any fixed $n\in \mathbb{Z}^+$,
\begin{equation*}
\lVert  \bar{\bm{\theta}}(s_{n+m}) - \bm{\theta}^{s_n}(s_{n+m}) \rVert \leq K_n e^{L_g\sum_{i=0}^{m-1}a_{n+i}},
\end{equation*}
holds for all $m\in\mathbb{Z}^+$. Therefore, for any $m\in \{0,1,...,m_\Delta\}$ (where $m_\Delta$ is defined in (\ref{mDelta}) and $m_{\Delta}\geq 1$ because of the content of the last paragraph in Step 1) we have
\begin{equation}
\label{L4-res}
\lVert  \bar{\bm{\theta}}(s_{n+m}) - \bm{\theta}^{s_n}(s_{n+m}) \rVert \leq K_n e^{L_g \Delta},
\end{equation}
which holds because\footnote{Eq.(\ref{L4-res}) holds trivially for $m=0$.} for $m\in [1,m_\Delta]$,
$$\sum_{i=0}^{m-1}a_{n+i}\leq \sum_{i=0}^{m_\Delta-1}a_{n+i} = s_{n+m_{\Delta}}-s_n = [s_n+\Delta]^- - s_n \leq \Delta.$$

\textbf{Step 5.} Now, we use Eq. (\ref{L4-res}) to bound $\lVert \bm{\theta}^{s_n}(t) - \bar{\bm{\theta}}(t)  \rVert$ for any $t\in [s_{n+m}, s_{n+m+1}]$, where $m$ is any integer in $[0, m_\Delta-1]$. (Recall that $\lVert \bm{\theta}^{s_n}(t) - \bar{\bm{\theta}}(t)  \rVert$ appears in (\ref{preview-eq}).)
\begin{equation*}
\begin{split}
\lVert \bm{\theta}^{s_n}(t) - \bar{\bm{\theta}}(t) \rVert &= \lVert \bm{\theta}^{s_n}(t) - (\lambda_t \bm{\theta}_{n+m}+(1-\lambda_t)(\bm{\theta}_{n+m+1})) \rVert\\
&= \lVert \lambda_t(\bm{\theta}^{s_n}(t) -  \bm{\theta}_{n+m})+(1-\lambda_t)(\bm{\theta}^{s_n}(t)-\bm{\theta}_{n+m+1}) \rVert\\
&= \lVert \lambda_t\left(\bm{\theta}^{s_n}(t) -  \bar{\bm{\theta}}(s_{n+m})\right)+(1-\lambda_t)\left(\bm{\theta}^{s_n}(t)-\bar{\bm{\theta}}(s_{n+m+1})\right) \rVert\\
&= \lVert \lambda_t\left(\bm{\theta}^{s_n}(s_{n+m}) - \bar{\bm{\theta}}(s_{n+m}) +\int_{s_{n+m}}^t g(\bm{\theta}^{s_n}(s))ds \right)\\
& \quad +(1-\lambda_t)\left(\bm{\theta}^{s_n}(s_{n+m+1})-\bar{\bm{\theta}}(s_{n+m+1}) - \int_t^{s_{n+m+1}} g(\bm{\theta}^{s_n}(s))ds\right)\rVert\\
&\leq K_n e^{L_g \Delta} +\int_{s_{n+m}}^{s_{n+m+1}} \lVert g(\bm{\theta}^{s_n}(s))\rVert ds, \quad  \text{by (\ref{L4-res}) and } \lambda_t\in [0,1],\\
&\leq K_n e^{L_g \Delta} + B_g a_{n+m},
\end{split}
\end{equation*}
where recall that $B_g:=\sup_{\bm{\theta}\in \overline{U(\bm{0};B_1)}}\|g(\bm{\theta})\|\in [0,+\infty)$. Therefore, for any $t\in[s_{n},s_{n+m_{\Delta}}]=[s_{n},[s_{n}+\Delta]^-]$, we have
\begin{equation*}
\lVert \bm{\theta}^{s_n}(t) - \bar{\bm{\theta}}(t) \rVert \leq K_n e^{L_g \Delta} + B_g \sup_{m\in\mathbb{Z}^+\cup\{0\}}a_{n+m},
\end{equation*}
which further implies
\begin{equation*}
\sup_{t\in [s_n, [s_n+\Delta]^-]}\lVert \bm{\theta}^{s_n}(t) - \bar{\bm{\theta}}(t) \rVert \leq K_n e^{L_g \Delta} + B_g \sup_{m\in\mathbb{Z}^+\cup\{0\}} a_{n+m}.
\end{equation*}
This proves (\ref{preview-eq}).

\textbf{Step 6.} Taking the limit on both sides of the above inequality  gives
\begin{equation}
\lim_{n\rightarrow +\infty} \sup_{t\in [s_n, [s_n+\Delta]^-]}\lVert \bm{\theta}^{s_n}(t) - \bar{\bm{\theta}}(t) \rVert\leq \lim_{n\rightarrow+\infty}[ K_n e^{L_g \Delta} + B_g \sup_{m \in \mathbb{Z}^+\cup\{0\}} a(n+m) ]=0,
\end{equation}
where the last equality follows from  $\lim_{n\rightarrow +\infty} \sup_{m\in\mathbb{Z}^+\cup\{0\}}a(n+m)=0$ implied from Point 3 in Assumption \ref{Assump0} and $\lim_{n\rightarrow +\infty}K_n =0, a.s  $ to which the proof is given below.

Recall (\ref{Kn-def}),
\begin{equation*}
K_n = B_g L_g \sum_{k=0}^{\infty} a^2_{n+k} + \sup_{j\in\mathbb{Z}^+}\lVert \delta_{n,n+j} \rVert.
\end{equation*}
Note that (i) $B_g<+\infty$ is the upper bound of $\lVert g(\cdot) \rVert$ in the region $\{x | x\in \mathbb{R}^d, \lVert x_n \rVert\leq B_1 \}$ and $L_g$ is the Lipschitz constant; (ii)
  $\sum_{k>0} a^2_{n+k}\rightarrow 0$ as $n\rightarrow \infty$ according to Point 3 of Assumption \ref{Assump0};
(iii) $\delta_{n,n+j} = \sum_{i=0}^{j-1} a_{n+i} M_{n+i+1}$.
Hence, $\lim_{n\rightarrow\infty}B_g \lambda_g \sum_{k=0}^{\infty} a^2_{n+k}=0$. To show that $\lim_{n\rightarrow +\infty}K_n =0, a.s, $ it remains to prove that $$\lim_{n\rightarrow \infty} \sup_{j\in\mathbb{Z}^+} \lVert \delta_{n,n+j} \rVert=0, \; a.s.$$ 
Define $\zeta_n:= \sum_{i=0}^{n-1}a_iM_{i+1}, n\geq 0$. Then $\zeta_n$ is a martingale with proof similar to that of Lemma \ref{Mn-mds}. From Point 2 of Assumption \ref{Assump0}, for any non-negative integer $n$ we have
\begin{equation*}
\begin{split}
\mathbb{E}[ \lVert \zeta_{n+1} - \zeta_n \rVert^2 | \mathcal{F}_n  ] &= \mathbb{E}[a^2_n\lVert M_{n+1} \rVert^2 | \mathcal{F}_n]\\
&\leq a^2_nK(1+ \lVert \bm{\theta}_n \rVert^2 ) \\
&\leq a^2_nK(1+ B_1^2 ), \quad \text{according to Assumption }\ref{Assump1}.
\end{split}
\end{equation*}
Therefore, according to Lemma \ref{C3} (see Appendix A), $\{\zeta_n\}$ converges almost surely, which implies, according to the Cauchy criterion, that
\begin{equation*}
\lim_{n\rightarrow +\infty} (\sup_{m\in\mathbb{Z}^+}\lVert \delta_{n,n+m} \rVert) = \lim_{n\rightarrow +\infty}(\sup_{m\in\mathbb{Z}^+}\lVert \zeta_{n+m} - \zeta_{n} \rVert)=0,\;  a.s.
\end{equation*}
This finishes the proof of (\ref{Lemma2.1a}) and hence the proof of Lemma \ref{Lemma2.1}.
\end{proof}

We are now ready to present and prove our main result.

\begin{theorem}
\label{main-theorem}
Under Assumptions \ref{Assump0}-\ref{Assump4}, the parameter-update rule (\ref{theta-update}) produces ${\bm{\theta}_n}$ that converges almost surely to $H^*$, the set of maximum points of $J_T(\bm{\theta})-\beta\|\bm{\theta} \|^2$ in $\overline{U(\bm{0};B_1)}$ as defined in Assumption \ref{Assump2}, with $J_T(\bm{\theta})$ as defined in (\ref{pmeasure}). That is,
\begin{equation}
\label{thete-H}
\lim_{n\rightarrow +\infty} \inf_{\bm{\theta}^*\in H^*}\| \bm{\theta}_n - \bm{\theta}^*\| =0,\quad  a.s.
\end{equation}
\end{theorem}

\emph{Proof.}
Define $F(\bm{\theta}):=-J_T(\bm{\theta})+\beta\|\bm{\theta}\|^2$ and $\mu:=\inf_{\|\bm{\theta}\|\leq B_1} F(\bm{\theta})$. Then, $\nabla F(\bm{\theta})=-g(\bm{\theta})$. From Point 1 of Assumption {\ref{Assump0}}, $F(\bm{\theta})$ is continuously differentiable, which implies that $F(\bm{\theta})$ attains both the maximum and minimum on the closed and bounded set $\overline{U(\bm{0};B_1)}$. Hence, $\mu>-\infty$ and  $ \emptyset\neq \{ \bm{\theta}\in \overline{U(\bm{0};B_1)} \mid   F(\bm{\theta}) = \mu\} = H^*$ (recall Assumption \ref{Assump2} for the definition of $H^*$).

We state an inequality whose proof is given later: for any $\varepsilon>0$ there exists $T_\varepsilon>0$ such that whenever $t>T_{\varepsilon}$ we have
\begin{equation}
\label{pt1}
 F(\bar{\bm{\theta}}(t)) \leq \mu+(B_g+1)\varepsilon.
\end{equation}
Using (\ref{pt1}), we prove that
\begin{align}
\label{x-star-inclusion}
\Theta^*:=\lim_{n\rightarrow +\infty} \bar{\bm{\theta}}(\tau_n)\in H^*
\end{align}
for any sequence $\{\tau_n\}_{n=1}^{+\infty}$ such that $\lim_{n\rightarrow+\infty} \tau_n = +\infty$ and $\{ \bar{\bm{\theta}}(\tau_n) \}_{n=0}^\infty$ converges.
Specifically, for any $\varepsilon>0$, suppose whenever $n>N_{\varepsilon}$ we have $\lVert \bar{\bm{\theta}}(\tau_n) - \Theta^*\rVert < \varepsilon$. Then, whenever $n>N_\varepsilon$ and $\tau_n>T_\varepsilon$ hold at the same time, one has
\begin{align*}
F(\Theta^*) &= F(\Theta^*) - F(\bar{\bm{\theta}}(\tau_n)) +  F(\bar{\bm{\theta}}(\tau_n))  \\
&\leq B_g \lVert \Theta^* - \bar{\bm{\theta}}(\tau_n)  \rVert + F(\bar{\bm{\theta}}(\tau_n)), \quad \text{by MVT}\\
&\leq B_g \lVert \Theta^* - \bar{\bm{\theta}}(\tau_n) \rVert + \mu + (B_g+1)\varepsilon,\quad \text{by }(\ref{pt1})\\
&< \mu + (2B_g+1)\varepsilon.
\end{align*}
Since $\varepsilon>0$ can be arbitrarily small, the above inequality implies that $ F(\Theta^*)  \leq \mu$, which further implies (according to the definition of $\mu$) that $ F(\Theta^*)  = \mu$. Hence, $\Theta^*\in H^*$.

Next, we prove Eq. (\ref{thete-H}) by contradiction. Assume that Eq. (\ref{thete-H}) is false. Then, there exists $\varepsilon_0>0$ and an increasing sequence $\{s_n\}_{n=0}^\infty\subset \mathbb{R}$ such that $\lim_{n\rightarrow +\infty} s_n = +\infty$ and $\inf_{\bm{\theta}\in H^*}\| \bar{\bm{\theta}}(s_n) - \bm{\theta} \|>\varepsilon_0$. This implies that any limit point $y^*$ of the set $\{\bar{\bm{\theta}}(s_n)\}_{n=0}^\infty$, which exists since $\{\bar{\bm{\theta}}(s_n)\}_{n=0}^\infty$ is bounded\footnote{From Assumption \ref{Assump1}, $\{\bar{\bm{\theta}}(t)\}_{t\geq 0}\in \overline{U(\mathbf{0};B_1)}$.}, satisfies $\inf_{\bm{\theta}\in H^*}\| y^* - \bm{\theta} \|>0$. Therefore, $y^*\notin H^*$, which contradicts the previously derived result of (\ref{x-star-inclusion}). Hence, Eq. (\ref{thete-H}) holds and Theorem \ref{main-theorem} is proved. 

Now, it is sufficient to prove  Eq. (\ref{pt1}). For any $\varepsilon>0$, define $$T_{\varepsilon}:=1+C/\Delta_{\varepsilon}+ S_\varepsilon,$$ where (i) $C:= \sup_{\bm{\theta}\in \overline{U(\bm{0};B_1)}}  (F(\bm{\theta}) -\mu) $; (ii) $\Delta_\varepsilon := \inf_{\bm{\theta}\in O_\varepsilon}\lVert g(\bm{\theta}) \rVert^2$, where $O_\varepsilon:=\{\bm{\theta}\in\overline{U(\bm{0};B_1)}\mid F(\bm{\theta})\geq \mu+\varepsilon\}$; (iii)  $S_\varepsilon>0$ denotes the threshold such that whenever $s\geq S_\varepsilon$ the following inequality holds:
$$  \sup_{t\in[s,s+C/\Delta_\varepsilon+1]}\lVert \bar{\bm{\theta}}(t) - \bm{\theta}^s(t) \rVert \leq \varepsilon, \quad a.s. $$
Note that, almost surely, $S_\varepsilon$ exists due to $(\ref{Lemma2.1-eq})$. We also note that $\Delta_\varepsilon>0$ because  $O_\varepsilon$ is bounded and closed\footnote{This set $O_\varepsilon$ is closed because any limit point $\Theta^*$ of this set belongs to the closed set $\overline{U(\bm{0};B_1)}$ and satisfies $F(\Theta^*)\geq \mu+\varepsilon$ as a result of the continuity of $F$, which implies that $\Theta^*\in O_\varepsilon$.},    $\lVert g(\bm{\theta}) \rVert^2$ is continuous since $g(\bm{\theta})$ is Lipschitz, and   $\lVert g(\bm{\theta}) \rVert^2$ is positive on $O_\varepsilon$ since $O_\varepsilon \cap H^*=\emptyset$\footnote{Recall that in Section \ref{Theorem1-proof} it was proved that $H^*=\{ \bm{\theta}\in \overline{U(\bm{0};B_1)} \mid   F(\bm{\theta}) = \mu\}$.} and   $H^*$ contains all the critical points of $F$ in $\overline{U(\bm{0};B_1)}$) according to Assumption \ref{Assump2}.

Whenever $t>T_\varepsilon$, define $s:=t-C/\Delta_\varepsilon-1$.
\begin{align*}
F(\bar{\bm{\theta}}(t)) &=  F(\bar{\bm{\theta}}(t))-F(\bm{\theta}^s(t)) + F(\bm{\theta}^s(t)),\\
&\leq | F(\bar{\bm{\theta}}(t))-F(\bm{\theta}^s(t)) | + F(\bm{\theta}^s(t)) \\
&\leq B_g \lVert \bar{\bm{\theta}}(t)-\bm{\theta}^s(t) \rVert +  F(\bm{\theta}^s(t)),   \quad\text{by MVT }\\
&\leq B_g \lVert \bar{\bm{\theta}}(t)-\bm{\theta}^s(t) \rVert +\mu+ \varepsilon, \quad\text{It will be justified later} \tag{A}\\
&= B_g \lVert \bar{\bm{\theta}}(s+C/\Delta_\varepsilon+1) -\bm{\theta}^s(s+C/\Delta_\varepsilon+1) \rVert +\mu + \varepsilon, \quad\text{by definition of $s$}\\
&\leq B_g \sup_{\tau\in [s,s+C/\Delta_\varepsilon+1]}\lVert \bar{\bm{\theta}}(\tau) -\bm{\theta}^s(\tau) \rVert +\mu + \varepsilon, \\
&\leq B_g\varepsilon + \mu + \varepsilon, \quad\text{since }s=t-C/\Delta_\varepsilon-1>T_\varepsilon-C/\Delta_\varepsilon-1=S_\varepsilon, \\
&= \mu + (B_g+1) \varepsilon.
\end{align*}
This is (\ref{pt1}). The final step is to justify the inequality (A) above, which is done below.

We prove the inequality $F(\bm{\theta}^s(t))\leq \mu+\varepsilon$ in (A) by contradiction. Assume that
$ F(\bm{\theta}^s(t)) > \mu + \varepsilon.$ This implies the following inequality for any $u\in [s,t]$:
$$ F(\bm{\theta}^s(u)) \geq F(\bm{\theta}^s(t)) \geq \mu+\varepsilon, $$
which follows from the fact $\frac{dF(\bm{\theta}^s(u))}{du}=-\lVert g(\bm{\theta}^s(u)) \rVert^2\leq 0$ since by definition $\dot{\bm{\theta}}^s(u) = g(\bm{\theta}^s)(u)$ for any $u\geq s$. The above inequality implies that for any $u\in [s,t]$,
\[ \bm{\theta}^s(u)\in O_\varepsilon \Rightarrow \lVert g(\bm{\theta}^s(u)) \rVert^2 \geq \Delta_\varepsilon.  \]
Hence,
\begin{align*}
F(\bm{\theta}^s(t)) - F(\bm{\theta}^s(s)) &= \int_s^{t} dF(\bm{\theta}^s(u)) \\
&= \int_s^{t} -\dot{\bm{\theta}^s}^{\text{T}}(u)g(\bm{\theta}^s(u))ds,\quad \text{T denotes matrix transpose}\\
&= \int_s^{t} -g^{\text{T}}(\bm{\theta}^s(u)) g(\bm{\theta}^s(u)) du\\
&= - \int_s^{t}\lVert g(\bm{\theta}^s(u)) \rVert^2 ds\\
&\leq -\Delta_\varepsilon (t-s)\\
&< -C,  \quad\text{from the definition of }s.
\end{align*}
have hence
$$ F(\bm{\theta}^s(t)) < F(\bm{\theta}^s(s)) - C = F(\bm{\theta}^s(s))+\inf_{\bm{\theta}\in \overline{U(\bm{\theta};B_1)}}(\mu -F(\bm{\theta}))  \leq \mu,$$
which is a contradiction, since $F(\bm{\theta})\geq \mu$ for all $\bm{\theta}\in \overline{U(\bm{0};B_1)}$ and $\bm{\theta}^s(t)\in \overline{U(\bm{0};B_1)}$ because of Assumption \ref{Assump3}. This completes the justification for (A), the proof of Eq. (\ref{pt1}), and thus the proof of Theorem \ref{main-theorem}.


\section{Reinforcement Learning for Inverse Problems}
\label{RLandRegu}
In this section, we study the theory of applying REINFORCE-OPT to solve the inverse problem (\ref{Tikhonov}). To convert (\ref{Tikhonov}) to the maximization problem (\ref{opt}), there are multiple options for defining the objective $\mathcal{L}(\cdot)$. For example, one can simply set 
\begin{equation}
\label{neg-obj}
\mathcal{L}(\bm{x}):= -\|f(\bm{x})-\bm{y}^\delta \|^2 - \alpha \Omega(\bm{x}).
\end{equation}

Under the above definition of $\mathcal{L}$ and the conditions required by Theorem \ref{interpret}, the  goal (\ref{rlgoal}) of reinforcement learning is equivalent to solving
\begin{equation}
\label{simple-goal}
 \bm{\theta}^*=\arg\min_{\bm{\theta}\in \mathbb{R}^d} \{ \mathbb{E}_{\bm{x}\sim \mu^{\pi_{\bm{\theta}}} }  \left[ \| f(\bm{x}) - \bm{y}^{\delta} \|^2 + \alpha\Omega(\bm{x})  \right] +\beta \| \bm{\theta} \|^2 \},
\end{equation}
where recall that $\mu^{\pi_{\bm{\theta}}}$ denotes the stationary distribution of the Markov chain $\{\bm{x}_t\}$ produced by following\footnote{$\bm{x}_{t+1}=\bm{x}_t+\bm{a}_t$ where $\bm{a}_t\sim \pi_{\bm{\theta}}(\cdot|\bm{x}_t)$, for any non-negative integer $t$.} $\pi_{\bm{\theta}}$. In what follows, through an investigation of $\pi_{\bm{\theta}^*}$, we reveal the connection between the RL-based method and the classical regularization methods to solve ill-posed inverse problems.

\subsection{Reinforcement-learning Goal (\ref{simple-goal}) Yields Tikhonov Regularization}
\label{ip-exp1}

We draw a connection between REINFORCE-OPT and Tikhonov regularization. For simplicity, we focus on linear inverse problems in the finite-dimensional case, i.e. $A\bm{x} + \delta\xi = \bm{y}^\delta$, where $\bm{x}\in\mathbb{R}^p$, $\bm{y}^{\delta}\in \mathbb{R}^q$, and $A: \mathbb{R}^p \to \mathbb{R}^q$ is a linear operator for some $p, q\in \mathbb{Z}^+$. Specifically, the Tikhonov regularization aims to solve
\begin{equation}
\label{inverse-lp}
\min_{\bm{x}\in \mathbb{R}^p} \|A\bm{x} - \bm{y}^\delta\|^2 + \alpha \|\bm{x}\|^2,
\end{equation}
where $\alpha\geq0$ is a regularization coefficient. For a linear operator $A$, the solution obtained using the Tikhonov regularization method has a closed-form expression: $x^*=(A^{\text{T}} A + \alpha I)^{-1} A^{\text{T}} \bm{y}^\delta$. In this subsection, we demonstrate that if $\pi_{\bm{\theta}}$ is set to a specific form, the mean of the limit distribution of $\{\bm{x}_t\}_{t=1}^\infty$ equals $\bm{x}^*$, where the sequence $\{\bm{x}_t\}_{t=1}^\infty$ is generated following $\pi_{\bm{\theta}^*}$.  

\vskip 0.1in 

\begin{theorem}
\label{ConnectToTikhonov}
Let $\bm{\theta}^*$ be the solution to (\ref{simple-goal}) under the following settings: $f(\bm{x}):= A \bm{x}$, $\beta:=0$, $\Omega(\bm{x}):=\alpha\|\bm{x}\|^2$, and $\pi_{\bm{\theta}}$ is modeled as
\begin{equation}
\label{pi-exampleI}
\pi_{\bm{\theta}}(\cdot|\bm{x}):= \mathcal{N}(\bm{\theta}-\bm{x},\Sigma),\quad\text{for any }\bm{x}\in \mathcal{S}:=\mathbb{R}^q,
\end{equation}
where $\mathcal{N}$ denotes the Gaussian distribution and $\Sigma \in \mathbb{R}^{q\times q}$ is any symmetric positive definite matrix. Then the mean of $\mu^{\pi_{\bm{\theta}^*}}$ equals the solution obtained by applying the Tikhonov regularization method to (\ref{Tikhonov}). 
\end{theorem}

\begin{proof}
With the form of $\pi_{\bm{\theta}}$ in Eq. (\ref{pi-exampleI}), the invariant distribution $\mu^{\pi_{\bm{\theta}}}$ is $\mathcal{N}(\bm{\theta},\Sigma)$ (the proof is postponed to the end of this part). With this information and the conditions assumed in the theorem, we rewrite (\ref{simple-goal}) as follows:
\begin{align*}
\bm{\theta}^*&= \arg \min_{\bm{\theta}} \mathbb{E}_{\bm{x}\sim \mu^{\pi_{\bm{\theta}}}}[ \| A \bm{x}-\bm{y}^\delta \|^2 + \alpha \|\bm{x}\|^2 ]  \\
& = \arg\min_{\bm{\theta}} \| A \mathbb{E} \bm{x}-\bm{y}^\delta \|^2 + \mathbb{E} \| A \bm{x} - A\mathbb{E} \bm{x}\|^2  + \alpha \mathbb{E} \| \bm{x}-\mathbb{E} \bm{x}\|^2 + \alpha \| \mathbb{E} \bm{x} \|^2  \\
&  = \arg\min_{\bm{\theta}} \| A \bm{\theta} -\bm{y}^\delta \|^2 + Trace (A \Sigma A^{\text{T}})  +  \alpha Trace (\Sigma)+ \alpha\|\bm{\theta}\|^2,
\end{align*}
where\footnote{We explain the last equality. Let $\bm{z}:=A \bm{x} - A\mathbb{E} \bm{x}\in\mathbb{R}^q$. Then,
$\mathbb{E} \| A \bm{x} - A\mathbb{E} \bm{x}\|^2=\mathbb{E}\left[ \bm{z}^{\text{T}}\bm{z} \right] = \mathbb{E}\left[ Trace(\bm{z}\bm{z}^{\text{T}}) \right]=Trace\left( \mathbb{E}[\bm{z}\bm{z}^{\text{T}}]\right) = Trace\left( A \text{Var}(\bm{x}) A^T \right) = Trace\left( A \Sigma A^T \right). $
} the expectation $\mathbb{E}$ is taken with respect to $\bm{x}\sim \mu^{\pi_{\bm{\theta}}}$. Then, the first-order condition implies
\begin{equation}
\label{IPsolution1}
\bm{\theta}^* = (A^{\text{T}} A + \alpha I)^{-1} A^{\text{T}} \bm{y}^\delta.
\end{equation}
Since the invariant distribution $\mu^{\pi_{\bm{\theta}^*}}$ of the Markov chain $\{\bm{x}_t\}$ generated by $\pi_{\bm{\theta}^*}$ is $\mathcal{N}(\bm{\theta}^*,\Sigma)$, the mean of $\bm{x}_t$'s limiting distribution as $t\to+\infty$ is $\bm{\theta}^* = (A^{\text{T}} A + \alpha I)^{-1} A^{\text{T}} \bm{y}^\delta$, which equals the Tikhonov regularization solution to the above linear inverse problem.

Now, it remains to prove that with $\pi_{\bm{\theta}}$ defined in (\ref{pi-exampleI}), the invariant distribution $\mu^{\pi_{\bm{\theta}}}$ of $\{\bm{x}_t\}$ generated by $\pi_{\bm{\theta}}$ is the Gaussian distribution $\mathcal{N}(\bm{\theta},\Sigma)$. According to the knowledge on Markov chains (e.g., \cite[Section 1.4]{Douc2018}), the invariant probability measure $\mu^{\pi_{\bm{\theta}}}$ satisfies the following two equations:
\begin{equation}
\label{stationary-dist}
\left\{
\begin{array}{l}
\int_{\bm{x}\in \mathbb{R}^p} \mu^{\pi_{\bm{\theta}}}(d\bm{x}) =1, \\
\mu^{\pi_{\bm{\theta}}}(\mathfrak{B}) = \int_{\bm{x}'\in \mathbb{R}^p} p^{\pi_{\bm{\theta}}}(\mathfrak{B} | \bm{x})\mu^{\pi_{\bm{\theta}}}(d\bm{x}),\quad\text{for any Borel set }\mathfrak{B}\subset\mathbb{R}^p,
\end{array}
\right.
\end{equation}
where $p^{\pi_{\bm{\theta}}}$ denotes the transition kernel of the Markov chain $\{\bm{x}_t\}$, and $p^{\pi_{\bm{\theta}}}(\mathfrak{B} | \bm{x})$ is the probability for the next state $\bm{x}'=\bm{x}+\bm{a}$ to be in $\mathfrak{B}$ conditional on the current state $\bm{x}$. By (\ref{pi-exampleI}), $\bm{a} \sim \mathcal{N}(\bm{\theta}-\bm{x},\Sigma)$, which implies $\bm{x}'\sim \mathcal{N}(\bm{\theta},\Sigma)$. Hence,
$p^{\pi_{\bm{\theta}}}(\mathfrak{B} | \bm{x}) = \mu^{\mathcal{N}(\bm{\theta},\Sigma)}(\mathfrak{B})$ for any $\bm{x}\in \mathbb{R}^p$, where $\mu^{\mathcal{N}(\bm{\theta},\Sigma)}$ denotes the Gaussian probability measure. Then, for any Borel $\mathfrak{B}\subset\mathbb{R}^p$, by Eq. (\ref{stationary-dist}),
\begin{align*}
\mu^{\pi_{\bm{\theta}}}(\mathfrak{B}) =\int_{\bm{x}\in \mathbb{R}^p} p^{\pi_{\bm{\theta}}}(\mathfrak{B} | \bm{x})\mu^{\pi_{\bm{\theta}}}(d\bm{x})
= \mu^{\mathcal{N}(\bm{\theta},\Sigma)}(\mathfrak{B}) \int_{\bm{x}\in \mathbb{R}^p} \mu^{\pi_{\bm{\theta}}}(d\bm{x}) =\mu^{\mathcal{N}(\bm{\theta},\Sigma)}(\mathfrak{B}).
\end{align*}
This proves that $\mu^{\pi_{\bm{\theta}}}$ is the probability measure of the Gaussian distribution $\mathcal{N}(\bm{\theta},\Sigma)$.
\end{proof}

\subsection{Reinforcement-learning Formulation (\ref{simple-goal}) Yields Iterative Regularization}

Consider again the same finite-dimensional linear inverse problem, $A\bm{x} + \delta\xi = \bm{y}^\delta$, as in the previous subsection. Our goal is to demonstrate that when $\pi_{\bm{\theta}}$ also follows a Gaussian distribution with a carefully designed mean, the updates of $\{\bm{x}_t\}$ under $\pi_{\bm{\theta}^*}$ become equivalent to certain well-known (stochastic) iterative schemes in the field of inverse problems. 

\vskip 0.1in

\begin{theorem}
\label{landweber}
Let $\bm{\theta}^*$ denote the solution to (\ref{simple-goal}) under the following settings: $f(\bm{x}):=A\bm{x}$, $\beta:=0$, and $\pi_{\bm{\theta}}$ is modeled as
\begin{equation}
\label{pi-example2}
\pi_{\bm{\theta}}(\cdot|\bm{x}):= \mathcal{N}(\bm{\theta}  - B \bm{x}, \sigma^2 I),\quad\text{for any }\bm{x}\in \mathcal{S}:=\mathbb{R}^q,
\end{equation}
where 
$B := \omega (A^\text{T} A  + \epsilon I)$,  $\omega \in (0, \frac{1}{3(\|A\|^{2}+\epsilon)})$, $I$ represents the identity matrix in $\mathbb{R}^{q\times q}$, and $\|A\|$ denotes the spectral norm of $A$.  Then the iteration rule by $\pi_{\bm{\theta}^*}$ is the same as
\begin{itemize}
\item \emph{the conventional Landweber method}: $\bm{x}_{t+1} = \bm{x}_{t} + \omega A^{\text{T}} (\bm{y}^\delta - A \bm{x}_{t})$, if $\epsilon=\alpha=\sigma = 0$ and $A^{\text{T}}A$ is invertible;
\item \emph{the stochastic Landweber method} \cite[Formulas (5) and (80)]{ZhangChen2023} and \cite[Formula (6)]{ZhangChen2024}: $\bm{x}_{t+1} = \bm{x}_{t} + \omega A^{\text{T}} (\bm{y}^\delta - A \bm{x}_{t}) + \sigma \bm{z}_t$, if $\epsilon=\alpha=0$ and $\sigma \neq 0$ and $A^{\text{T}}A$ is invertible;
\item \emph{the (stochastic) Krasnoselskii-Mann's acceleration method} \cite[Formula (6)]{ZhangHofmann2021}: $\bm{x}_{t+1} = (1-\omega \epsilon) \bm{x}_{t} + \omega A^{\text{T}}( \bm{y}^\delta - A \bm{x}_{t}) + \sigma \bm{z}_t$, if $\epsilon = \alpha \neq 0$.
\end{itemize}
\end{theorem}

\begin{proof}
In the following proof, starting from (\ref{pi-example2}), we derive the expression for $\mu^{\pi_{\bm{\theta}}}$ and substitute it into (\ref{simple-goal}) to obtain the solution $\bm{\theta}^*$ in (\ref{IPsolution2Bais}). Next, in (\ref{xt}), we write the transition rule using $\pi_{\bm{\theta}^*}$ and demonstrate that it is equivalent to the (stochastic) Landweber iteration under the conditions specified in the theorem.

From (\ref{iteration}) and (\ref{pi-example2}), the transition rule in the trajectory generated by $\pi_{\bm{\theta}}$ is (recall that $\bm{a}_t\sim \pi_{\bm{\theta}}(\cdot|\bm{x}_t)$) 
\begin{equation}
\label{transition-exampleII}
\bm{x}_{t+1} = \bm{x}_t + \bm{a}_t = \bm{\theta} + (I - B) \bm{x}_t + \sigma \bm{z}_t,
\end{equation}
where $\bm{z}_t\sim \mathcal{N}(0,I)$ is independent of $\bm{x}_t$. This implies
\begin{equation}
\bm{x}_t = \sum_{i=0}^{t-1} (I - B)^i \bm{\theta} + \sum_{i=0}^{t-1} (I - B)^i\sigma \bm{z}_{t-i-1} +  (I - B)^t \bm{x}_0.
\end{equation}
This equality and the fact  $\{\bm{z}_t\}$ being independent $\mathcal{N}(0,I)$ imply that $\bm{x}_t$ follows $\mathcal{N}(\mu_t,\Sigma_t)$ given that $\bm{x}_0$ is a fixed vector, where
$$\mu_t:=\sum_{i=0}^{t-1} (I - B)^i \bm{\theta} +  (I - B)^t \bm{x}_0,$$
and
$$\Sigma_t:= \sum_{i=0}^{t-1} (I - B)^i \sigma I \left( (I - B)^i \sigma I \right) ^{\text{T}} =  \sigma^2 \sum_{i=0}^{t-1} (I - B)^{2i}. $$
Since $\omega \in (0, \frac{1}{3(\|A\|^{2}+\epsilon)})$, we have\footnote{$\|B\|\leq \omega (\|A\|^2+\epsilon)<1/3$. Also, by definition $B$ is symmetric and positive definite. Hence, $\|I-B\|<1$.} $\|I - B\|<1$. Hence,
$$\lim_{t\rightarrow \infty} \mu_t = B^{-1} \bm{\theta} \text{ and } \lim_{t\rightarrow \infty} \Sigma_t = \sigma^2 (2B - B^2)^{-1}.$$
This implies, according to Levy's theorem for characteristic functions (e.g., \cite[Theorem 26.3]{Billingsley1995}), that the limiting distribution $\mu^{\pi_{\bm{\theta}}}$ of $\bm{x}_t$ as $t\rightarrow \infty$ is Gaussian, with its mean equal to $B^{-1} \bm{\theta}$ and its variance equal to $\sigma^2 (2B - B^2)^{-1}$, namely
\begin{equation}
\label{xNormal}
\mu^{\pi_{\bm{\theta}}} = \mathcal{N}(B^{-1}\bm{\theta}, \sigma^2(2B - B^2)^{-1}).
\end{equation}
Now we derive an expression for the solution $\bm{\theta}^*$ to (\ref{simple-goal}) when $\Omega(\bm{x})=\alpha\|\bm{x}\|^2$ and $\beta=0$. We rewrite (\ref{simple-goal}) as
\begin{align*}
& \min_{\bm{\theta}} \mathbb{E}_{\bm{x}\sim d^{\pi_{\bm{\theta}}}}[ \| A \bm{x}-\bm{y}^\delta \|^2 + \alpha \|\bm{x}\|^2 ]  \\
& \qquad = \min_{\bm{\theta}} \| A \mathbb{E} \bm{x}-\bm{y}^\delta \|^2 + \mathbb{E} \| A \bm{x} - A\mathbb{E} \bm{x}\|^2  + \alpha \mathbb{E} \| \bm{x}-\mathbb{E} \bm{x}\|^2 + \alpha \| \mathbb{E} \bm{x} \|^2  \\
& \qquad = \min_{\bm{\theta}} \| A \mathbb{E} \bm{x}-\bm{y}^\delta \|^2 + \text{Trace}\left(A \cdot \text{Var} \cdot (\bm{x})A^T \right)  + \alpha \text{Trace}\left( \text{Var} \cdot (\bm{x}) \right) + \alpha \| \mathbb{E} \bm{x} \|^2  \\
& \qquad = \min_{\bm{\theta}} \| AB^{-1}\bm{\theta} -\bm{y}^\delta \|^2 + \sigma^2 \text{Trace}\left( A(2B-B^2)^{-1} A^T \right)  \\
&\qquad \qquad \qquad + \alpha \sigma^2 \text{Trace}\left( (2B-B^2)^{-1} \right) + \alpha \| B^{-1}\bm{\theta} \|^2, \qquad\text{by } (\ref{xNormal}).
\end{align*}
From the first-order necessary condition for optimality, the solution $\bm{\theta}^*$ is
\begin{equation}
\label{IPsolution2Bais}
\bm{\theta}^* = \left( B^{-1}A^\text{T}AB^{-1}+\alpha B^{-2} \right)^{-1} B^{-1} A^\text{T}\bm{y}^{\delta}.
\end{equation}
Plugging this result to (\ref{transition-exampleII}) gives the iteration rule in the trajectory generated by $\pi_{\bm{\theta}^*}$: 
\begin{equation}
\begin{split}
\label{xt}
\bm{x}_{t+1} &= \bm{x}_t + \bm{a}_t = \bm{x}_t+ \bm{\theta}^* - B\bm{x}_t + \sigma \bm{z}_t \\
&=\bm{x}_t + \left( B^{-1}A^\text{T}AB^{-1}+\alpha B^{-2} \right)^{-1} B^{-1} A^\text{T}\bm{y}^{\delta}  - B \bm{x}_t + \sigma \bm{z}_t \\
&=\bm{x}_t + \left( B^{-1}(A^\text{T}A+\alpha I)B^{-1} \right)^{-1} B^{-1} A^\text{T}\bm{y}^{\delta}  - B \bm{x}_t + \sigma \bm{z}_t \\
&=\bm{x}_t +  B(A^\text{T}A+\alpha I)^{-1} A^\text{T}\bm{y}^{\delta}  - B \bm{x}_t + \sigma \bm{z}_t \\
&= \bm{x}_t + \omega (A^\text{T} A  + \epsilon I) \left[ (A^\text{T} A  + \alpha I)^{-1} A^\text{T}\bm{y}^{\delta}  - \bm{x}_t \right]  + \sigma \bm{z}_t, 
\end{split}
\end{equation}
which leads to the proposed iterative regularization schemes for different choices of parameters $\epsilon, \alpha$ and $\sigma$. 
\end{proof}


Before we close this section, let us now highlight the essential contributions of this work:
\begin{itemize}
\item \emph{Theoretical contributions to RL-based methods}: This work advances the theoretical understanding of RL-based methods for solving potentially nonconvex optimization problems with continuous solution spaces. To our knowledge, we are the first to establish the equivalence between the complex objective of RL-based methods and the stochastic version (\ref{rlgoal-opt}) of the optimization goal, and the first to provide a theoretical convergence analysis for RL-based optimization methods. In Theorem \ref{main-theorem}, we rigorously prove that, under standard assumptions, REINFORCE-OPT generates a sequence $\{\bm{\theta}_t\}_{t=0}^{\infty}$ that almost surely converges to a locally optimal value. Our proof employs the ordinary-differential-equation method (e.g., \cite{BorkarMeyn2000} and \cite[Section 8.1]{Borkar2022}).
    
\item \emph{Introduction of RL to inverse problems}: This work rigorously introduces an RL algorithm to the field of inverse problems and, for the first time, connects RL with classical regularization methods (both variational regularization and iterative regularization) for solving general inverse problems (\ref{inversep}), as discussed in Section \ref{RLandRegu}. Existing research applying RL to inverse problems is limited and primarily focuses on empirical applications. For example, the authors in \cite{Sridharan2022} use deep RL\footnote{Deep RL leverages deep neural networks as function approximators in RL algorithms. See \cite{Yuxi-Li2018} for more details.} to solve a physical chemistry problem involving the inversion of nuclear magnetic resonance spectra for molecular structures. The authors in \cite{DAversana2022} design an inversion algorithm based on Q-learning for geoscience applications, while the authors in \cite{SantosaAnderson2022} show how to transform Bayesian sequential optimal experimental design, a method for solving inverse problems, into a problem solvable by RL. \cite{Dong2024} applies RL to jointly learn good samplers and reconstructors for MRI inverse problems.

\item \emph{Practical applicability}: Our numerical results given in Section \ref{Simulations} indicate that REINFORCE-OPT is capable of solving inverse problems of the form (\ref{inversep}) involving complex functions (i.e. non-linear mappings without explicit formula) $f$, showcasing its practicality and success as a method for inverse problems.
\end{itemize}

\section{Numerical experiments}
\label{Simulations}

\subsection{Performance of REINFORCE-OPT for Continuous Optimization}
\label{experiment-opt}

In this section, we illustrate the advantages of REINFORCE-OPT over several mainstream optimization methods through  experiments. For ease of visualization, most of the experiments are conducted in one- or two-dimensional $\bm{x}$ spaces, without loss of generality.

For the purpose of illustrating these advantages, we report quantities obtained in the training process of the algorithm, rather than the output stated in Steps 3 and 4 in Algorithm \ref{alg:Framwork1}. For example, to show that the algorithm may be able to escape from local optimums, we report the path $\{\bm{x}_t\}_{t=0}^{T-1}$ produced by the trained $\pi_{\theta}$. To show that the algorithm is more robust than GA and PSO against the choice of initial $\bm{x}_0$ (GA and PSO choose an initial population, i.e., a group of $\bm{x}_0$-values), for a proper comparison, we report the best solution candidate found during the training of REINFORCE-OPT in Algorithm \ref{alg:Framwork}.

\subsubsection{Escape from Local Optimum}
\label{escape-from-local}
In this experiment, the problem to solve is $\max_{x\in\mathbb{R}}  \mathcal{L}(x)$ with $\mathcal{L}(x)=-(x^2-1)^2 - 0.3(x-1)^2+5$, where the starting value is fixed at $x_0=-1.5$. We follow (\ref{REINFORCE}) to solve this problem. For a comparison with the gradient ascent algorithm in a one-dimension case, we set the action space as $\{-1,0,1\}$, and construct the action distribution $\pi_\theta(\cdot|\bm{x})$ as a neural net whose output consists of the probability of each action. The hyper-parameters of this neural net are selected by the commonly used validation-set approach (i.e., from a pre-selected candidate set, the set of hyper-parameter values that lead to the best performance are selected). For example, the selected hidden layer structure is (10,5). The number of $\bm{\theta}$-updates is set as 3,500.

Also, since gradient ascent involves the quantity of a step size $\gamma$, for a better comparison, we add this quantity to REINFORCE-OPT by modifying the MDP iteration rule (\ref{iteration}) as $ x_{t+1} = x_t + \gamma a_t$, where we set $\gamma = 0.1$.
With the trained parameter value $\hat{\bm{\theta}}^*$, we generate a path $\{x_t\}_{t=1}^{35}$ using $\pi_{\hat{\bm{\theta}}^*}$. That is, $x_{t+1} =x_{t} + \gamma a_t,$ where $a_t$ equals the mode of the distribution $\pi_{\hat{\bm{\theta}}^*}(\cdot|x_t)$. The results are plotted in Figure \ref{escape-local}, which illustrates the ability of REINFORCE-OPT to escape from the local maximum to the global maximum of $x=1.0$. The performance of the gradient ascent (performed 35 steps) is also plotted. As the graph shows, its search is trapped in the local maximum.

\begin{figure}[H]
\begin{tabular}{cc}
	\centering
        \subfloat[REINFORCE Trajectory]{\includegraphics[scale=0.35]{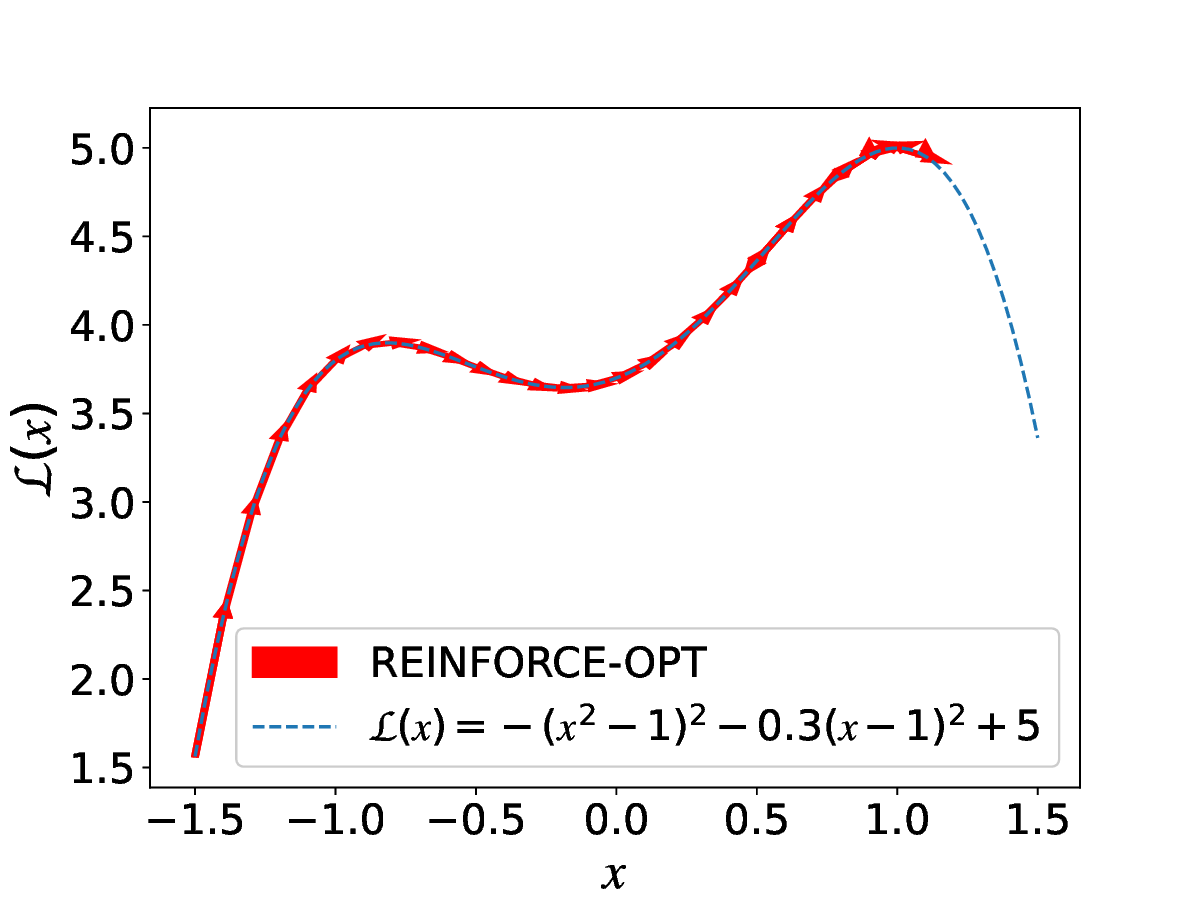} }
        &
        \subfloat[Gradient-Ascent Trajectory]{\includegraphics[scale=0.35]{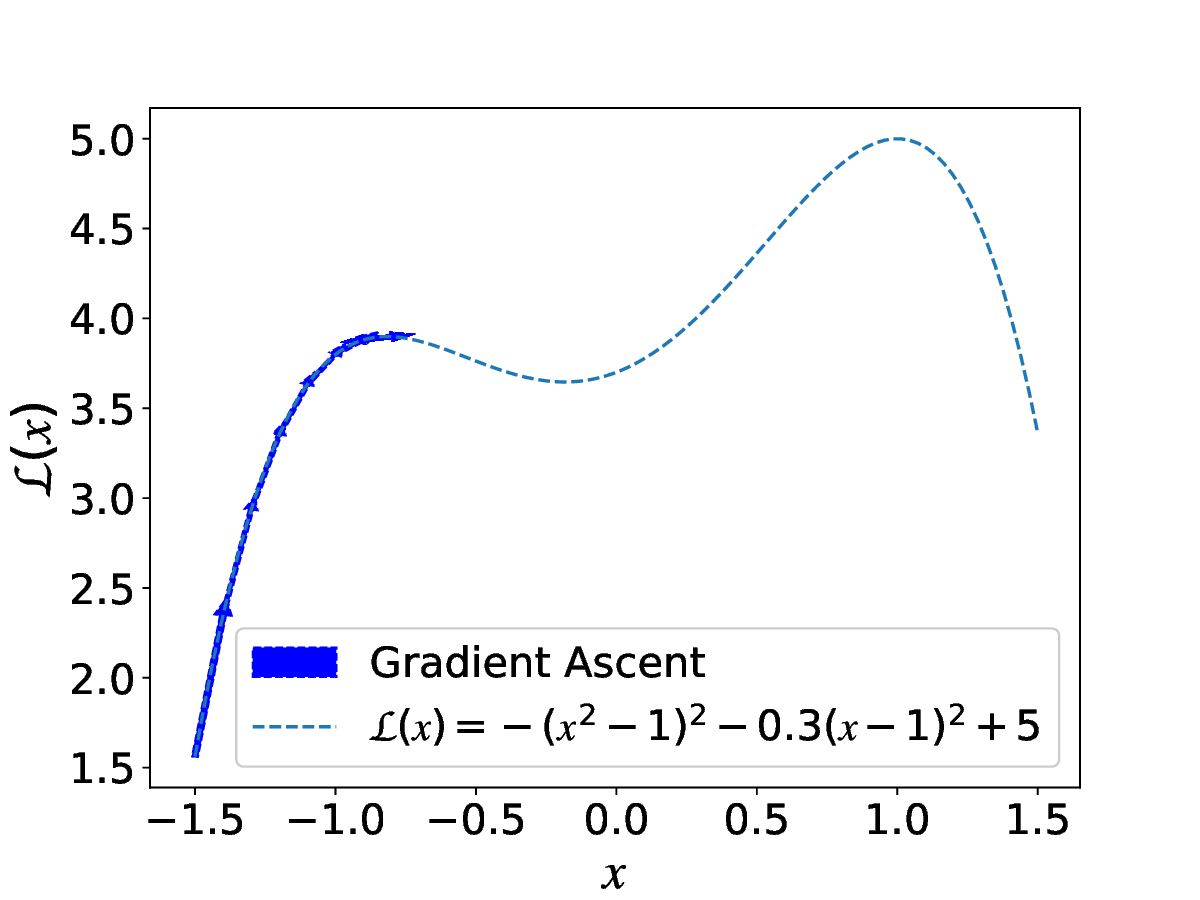} }
\end{tabular}
       	\caption{Illustration: REINFORCE-OPT escapes from the local minimum. It seems that the gradient ascent agent moves fewer steps than REINFORCE-OPT. But in fact it keeps moving back and forth around the local minimum.}
\label{escape-local}
\end{figure}

\begin{figure}[H]
\begin{tabular}{cc}
	\centering
        \subfloat[The 1D Case]{\includegraphics[scale=0.35]{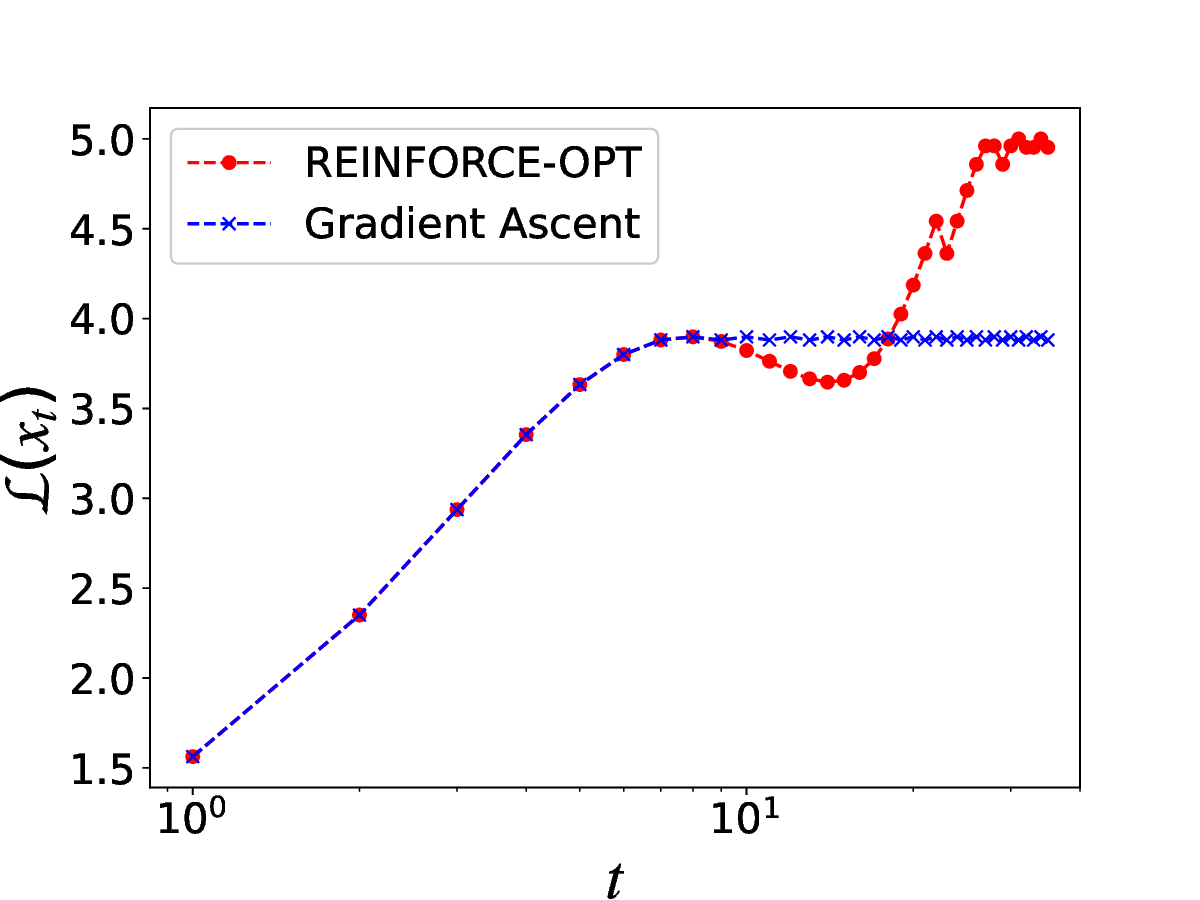} }
        &
        \subfloat[The 2D Case]{\includegraphics[scale=0.35]{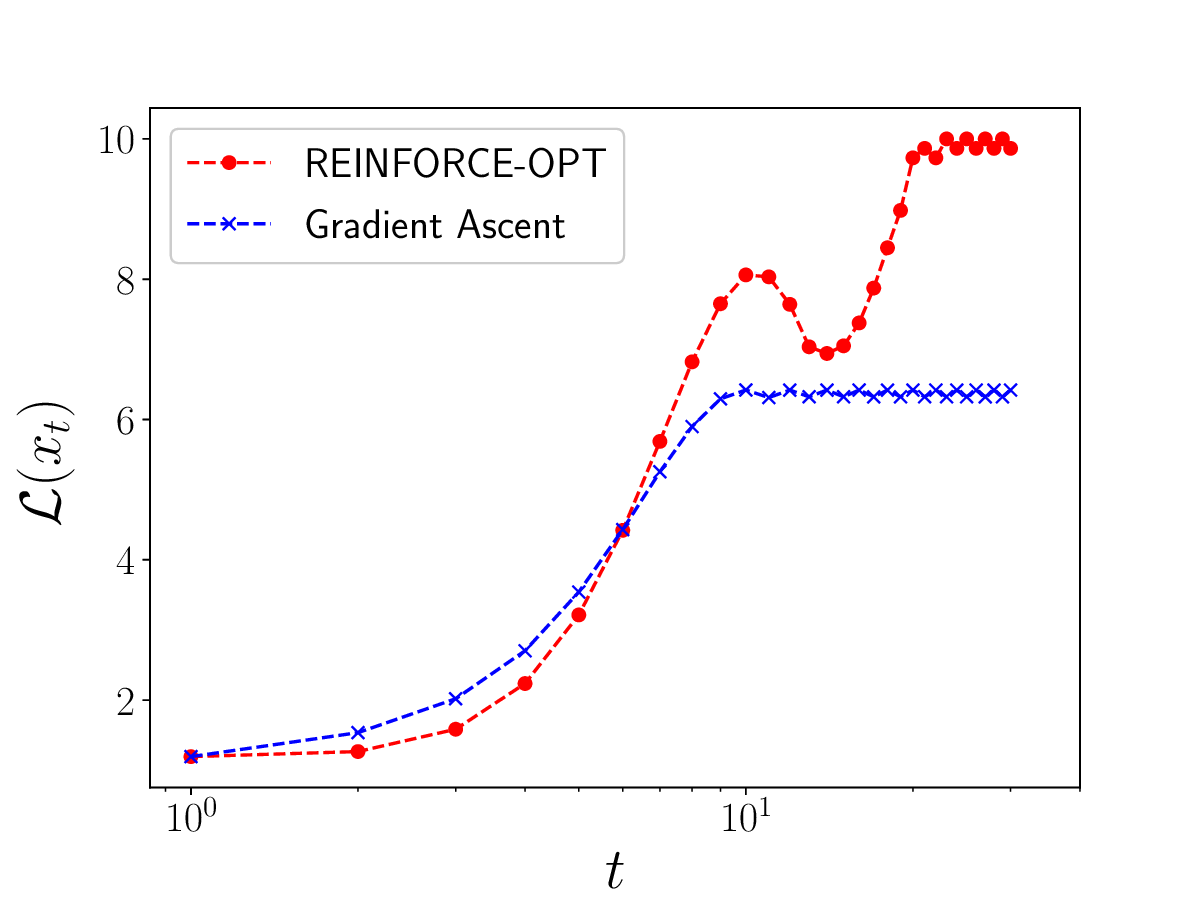} }
\end{tabular}
       \caption{The fitness trajectory of the two methods in Figure \ref{escape-local}, where the fitness at step $t$ is $\mathcal{L}(x_t)$.}
\label{loss-traj}
\end{figure}

Next, we repeat the experiment with a more complicated objective that resembles Eq. (78) in \cite{Bjorn2023}, where we set the action space as $\{\frac{(a,b)}{\sqrt{a^2+b^2+10^{-6}}}| a,b\in\{-1,0,1\} \}$ and $\gamma=0.1$. Hence, each $\bm{x}$-step is normalized to have a size of $0.1$. The result is shown in Figure \ref{escape-local2D}, which again verifies the ability of REINFORCE-OPT to escape from the local optimum.

\begin{figure}[H]
\begin{tabular}{cc}
	\centering
        \subfloat[Objective: $\mathcal{L}(x_1,x_2)=8+\cos(10x_1)+\cos(10x_2)-5(x_1^2+x_2^2)$.]{\includegraphics[scale=0.4]{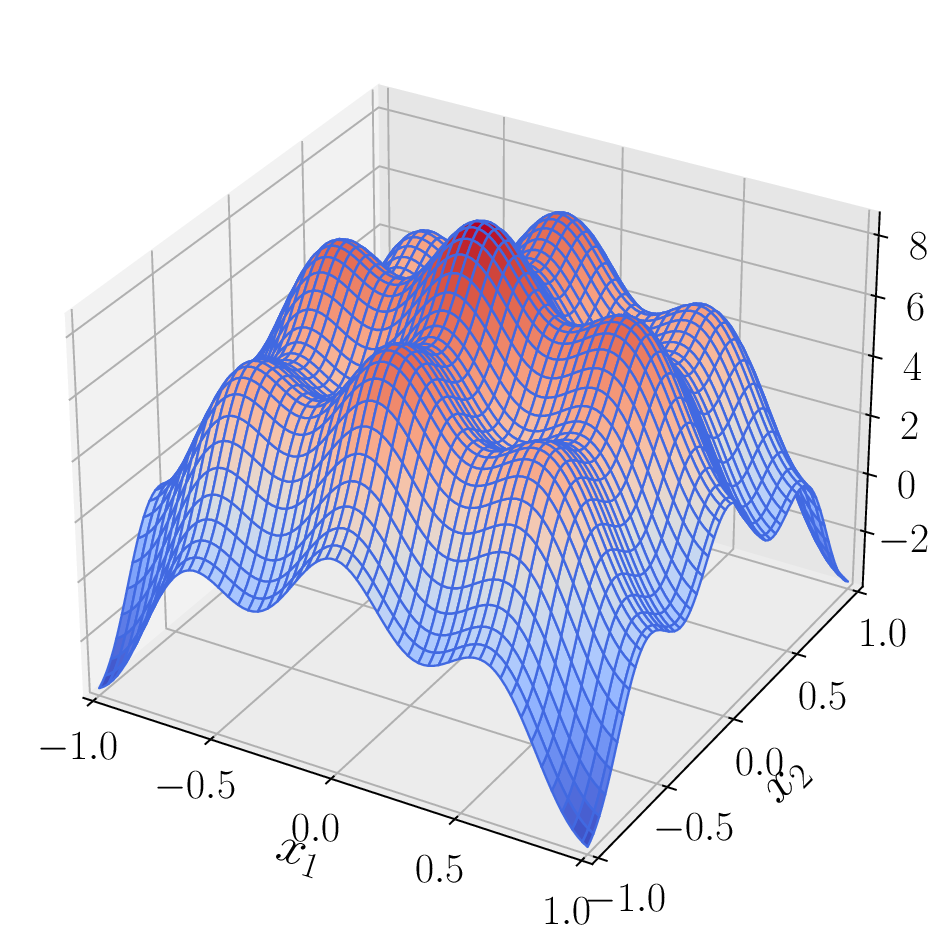} }
        &
        \subfloat[Searc h Trajectory in the contour plot of $\mathcal{L}(\bm{x})$. Note that $\bm{x}$-points on the same curve have equal $\mathcal{L}$ values. ]{\includegraphics[scale=0.4]{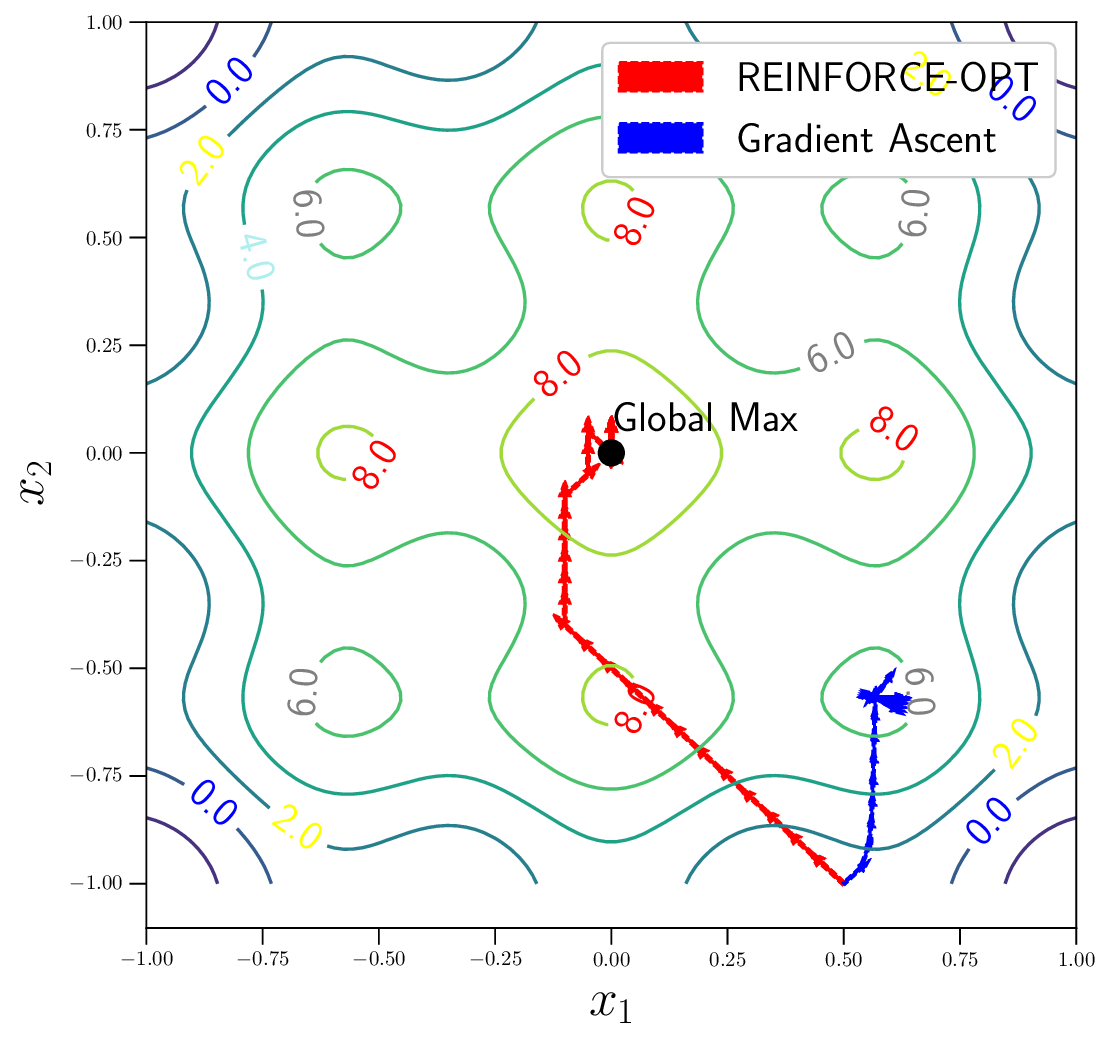} }
\end{tabular}
       	\caption{Illustration: REINFORCE-OPT escapes from the local minimum.}
\label{escape-local2D}
\end{figure}

\subsubsection{Robustness Against Initial Values} \label{Robustness} In this subsection, we compare REINFORCE-OPT ($\ref{REINFORCE}$) with other three popular global optimization methods, the genetic algorithm \cite{gad2023pygad},  the particle swarm optimization \cite{pyswarmsJOSS2018}, and the cross-entropy method for optimization \cite[Algorithm 4.1]{BOTEV201335}. All four algorithms start from an initial value $\bm{x}_0$ (or an initial population), and iteratively decide on the next group of $\bm{x}$-values to be examined according to a search heuristic. Each such group is called a generation. For REINFORCE-OPT, more specifically, we refer a generation to $\{\bm{x}_t^l: t\in \{0,1,2,...,T-1\}, l\in\{0,1,2,...,L-1\} \}.$ This generation is used to compute $\hat{\nabla}J_T(\bm{\theta})$ in (\ref{gradient-est}), the term used for one update of $\bm{\theta}$. Also, the search heuristic $\pi_{\bm{\theta}}(\cdot|\bm{x})$ for REINFORCE-OPT is modeled as a multivariate normal distribution with its mean and standard deviation equal to the output of a neural net $\mathcal{N}_{\theta}(\bm{x})$.

The selections of the initial $\bm{x}$ values are difficult to make. Therefore, the robustness of the method's performance against the choice of initial values is important. We compare the robustness of three methods  through solving a simple but representative problem: $\max_{\bm{x}\in\mathbb{R}^k} \mathcal{L}(\bm{x})$ with
$$ \mathcal{L}(\bm{x}):=-\ln((\bm{x}-\bm{m}_1)^2+0.00001)-\ln((\bm{x}-\bm{m}_2)^2+0.01),$$ 
where all the entries of $\bm{m}_1\in \mathbb{R}^k$ are set as $-0.5$, and all the entries of $\bm{m}_2\in \mathbb{R}^k$ are set as $0.5$. The graph of $\mathcal{L}$ when $k=2$ is plotted in Figure \ref{2D-f}, which shows that $\mathcal{L}$ has a global maximum at $\bm{m}_1$ and a local maximum at $\bm{m}_2$. For each $k\in\{2,6\}$ and each method, we perform two experiments with varied $\bm{x}_0$ values (or varied initial populations). Table \ref{compare-parameters} reports the hyper-parameters' values (selected by the validation-set approach) for one\footnote{In Table \ref{compare-parameters}, the second column specifies which one of the two experiments the reported hyper-parameters' values correpond to.} of the two experiments. The hyper-parameter values for other experiments can be found in our code, i.e. \url{http://github.com/chen-research/REINFORCE-OPT}.

\begin{figure}[H]
	\centering
	\includegraphics[scale=0.5]{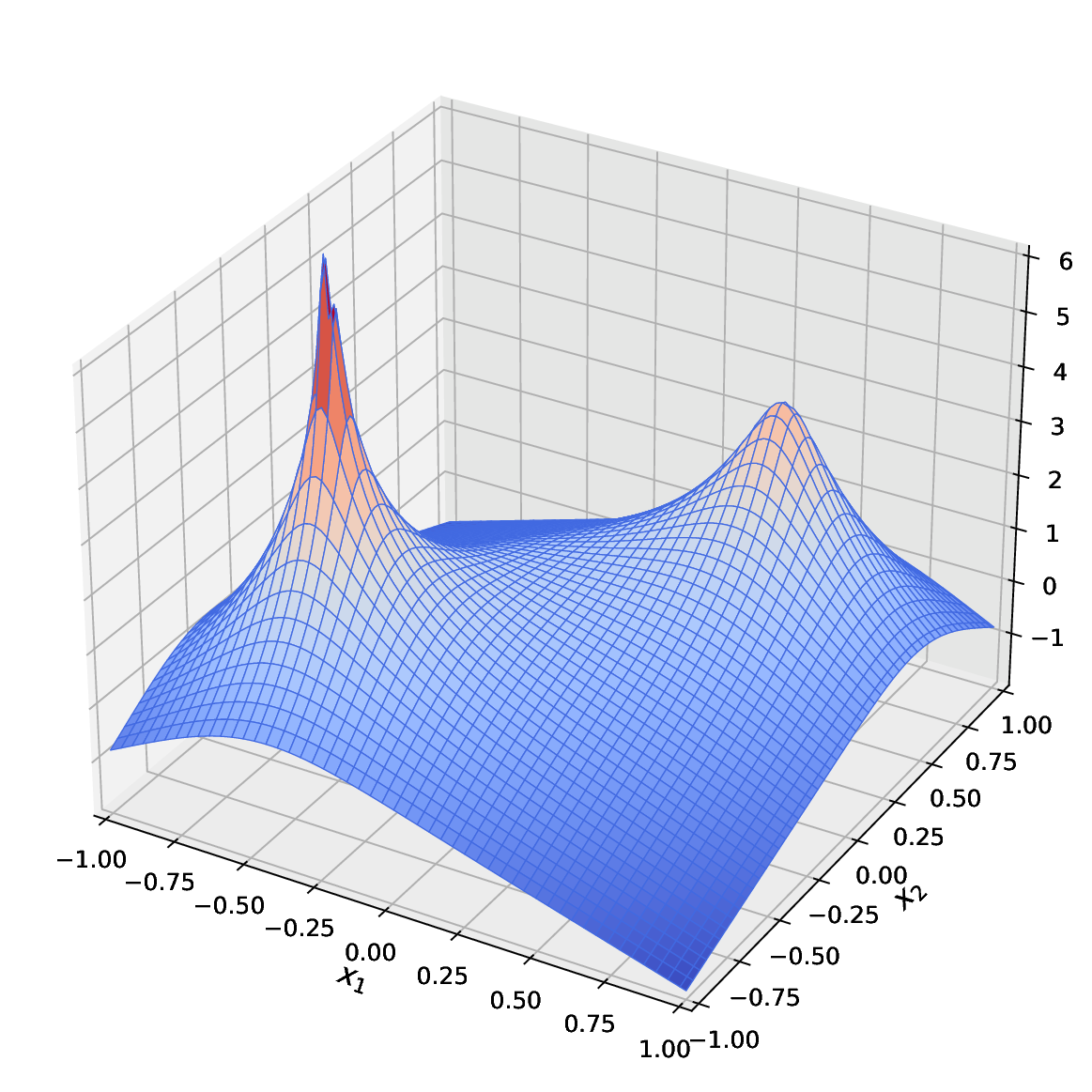}
       	\caption{The graph of $\mathcal{L}(\bm{x}):=-\ln((\bm{x}-\bm{m}_1)^2+0.00001)-\ln((\bm{x}-\bm{m}_2)^2+0.01)$ for the 2D case.}
\label{2D-f}
\end{figure}

\begin{table}[!htb]
\caption{Hyper-parameters for one of the two experiments performed for each method when $k=6$.}
\label{compare-parameters}
\centering%
\begin{tabular}{ p{2cm}  p{3cm} p{7.5cm}}
\midrule
Method & The initial population or $\bm{x}_0$. & The hyper-parameters' values.\\
\toprule
REINFORCE-OPT ($\ref{REINFORCE}$) & Each entry of $\bm{x}_0$ is uniformly sampled from the interval of $[0.2,0.3]$. & Hidden layers: (18,18,18). Activation: tanh. $L=60$. $T=25$. $a_n=\frac{5}{50,000+n}$. \\
Genetic Algorithm \cite{gad2023pygad}
 & Each entry of any candidate in the initial population is uniformly sampled from [0,1]. & Number of mating-parent pairs = 20. Gene mutation percent$=10\%$. Crossover type: single point.\\
Particle Swarm Optimization \cite{pyswarmsJOSS2018} & Each entry of any candidate in the initial population is uniformly sampled from [0,1].  &  Cognitive parameter $c_1=0.2$. Social parameter $c_2=0.2$. Inertia parameter $w=0.5$. \\
Cross-entropy Method \cite[Algorithm 4.1]{BOTEV201335} & Each entry of any candidate in the initial population is uniformly sampled from [0,1].  & $N^e=30$, where $N^e$ denotes the number of top solution candidates in a generation used to update the Gaussian mean $\mu$ and standard deviation $\sigma$. $\alpha=0.9$, which is a weight used in the update $(\mu',\sigma')=(\mu,\sigma)+\alpha((\mu,\sigma))$.\\
\bottomrule
\end{tabular}
\footnotetext{All hyper-parameters are selected by the validation-set approach, except that for a fair comparison, the generation size for the genetic algorithm, particle swarm optimization and the cross-entropy method, is set equal to REINFORCE-OPT's generation size $LT$.}
\end{table}
For each pre-selected initial value (or initial population), we perform one of the four investigated methods, and report the $\mathcal{L}$-value of the best solution candidate found among all the generations. Specifically, let $\{\mathbf{x}_n\}$ denote the best solution candidate found up to the $n^{th}$ generation. We report $\textbf{x}^*=\arg\max_{n}\mathcal{L}(\textbf{x}_n)$ in Tables \ref{compare-results} and \ref{robustk6}. The results indicate that if the range of the initial population does not cover the global maximum, GA will be trapped in the local maximum. PSO seems unaffected by the selection of the initial population if $k=2$. However, this robustness disappears when the dimension increases a little to $k=6$. On the contrary, REINFORCE-OPT is able to find solutions close to the global maximum in both of the two cases, regardless of the location of the initial value $\bm{x}_0$, even if $\bm{x}_0$ is much closer to the local maximum $\bm{m}_2$ that to the global maximum $\bm{m}_1$. This indicates that the robustness of REINFORCE-OPT is stronger than that of GA and PSO.

\begin{table}[!htb]
\caption{Experiment Results When $k=2$}
\label{compare-results}
\centering%
\begin{tabular}{ p{2cm}  p{5cm}  p{2.5cm} p{1cm} }
\midrule
 & Initial Value or Population & Best Solution Candidate $\textbf{x}^*$ & $f(\textbf{x}^*)$ \\
\toprule
REINF. OPT. & $\bm{x}_0=[0,0]$.   &  $(-0.500,-0.500)$       & 10.752\\
                               & $\bm{x}_0\sim$ uniform$[0.2,0.3]$.  &$(-0.500,-0.501)$   &10.761\\

Genetic Algo.          & For any $\bm{x}\in\mathcal{P}_0$, $\bm{x}\sim$ uniform$[-1,1]$.                    & $(-0.499,-0.502)$  & 10.266\\
                               
                               & For any $\bm{x}\in\mathcal{P}_0$, $\bm{x}\sim$ uniform $[0,1].$ & (0.494, 0.494)   & 3.917 \\
Part. Swarm & For any $\bm{x}\in\mathcal{P}_0$, $\bm{x}\sim$ uniform$[-1,1]$.                &  $ (-0.500,-0.500) $  & 10.815 \\
                               & For any $\bm{x}\in\mathcal{P}_0$, $\bm{x}\sim$ uniform$[0,1].$ &   $ (-0.500,-0.500) $  & 10.815 \\
Cross-Entropy. & For any $\bm{x}\in\mathcal{P}_0$, $\bm{x}\sim$ uniform$[-1,1]$.                &  $ (-0.500,-0.500) $  & 10.815 \\
                               & For any $\bm{x}\in\mathcal{P}_0$, $\bm{x}\sim $uniform$[0,1].$ &   $ (0.495,0.495) $  & 3.917 \\

\bottomrule
\end{tabular}
\footnotetext{Total number of generations: 1000. $\bm{m}_1=[-.5,-.5]$. $\bm{m}_2=[.5,.5]$.}
\footnotetext{\textbf{Notation.} $\mathcal{P}_0$ denotes the initial population. For any real random vector $\bm{z}$ and any two real scalar $a<b$, $\bm{z} \sim$ uniform$[a, b]$ denotes that each entry of $\bm{z}$ is sampled uniformly from the interval $[a, b]$.}
\end{table}

\begin{table}
\caption{Experiment Results When $k=6$}
\label{robustk6}
\centering
\begin{tabular}{ p{1.9cm}  p{4.5cm}  p{1.7cm}  p{1.7cm} p{0.7cm} }
\midrule
 & Initial Value or Population & $\frac{\|\textbf{x}^*-\bm{m}_1\|^2}{6}$ & $\frac{\|\textbf{x}^*-\bm{m}_2\|^2}{6}$  & $f(\textbf{x}^*)$ \\
\toprule
REINF.-OPT & $\bm{x}_0=[0,0,...,0]\in \mathbb{R}^{6}$.        & $6.776\times 10^{-4}$  &  0.990 & 3.720 \\
                               &  $\bm{x}_0\sim$uniform$[0.2,0.3]$. & $8.533\times 10^{-4}$ &0.978   &3.501 \\
Genetic Algo.          & $\bm{x}\sim $uniform$[-1,1]$ for any $\bm{x}\in\mathcal{P}_0$.      & $3.963\times 10^{-6}$  &  1.000 & 8.503 \\
                               & $\bm{x}\sim$uniform$[0,1]$ for any $\bm{x}\in\mathcal{P}_0$.  &  0.998    &$3.747\times 10^{-5}$  &0.123 \\
Part. Swarm & $\bm{x}\sim $uniform$[-1,1]$ for any $\bm{x}\in\mathcal{P}_0$.                 & $1.638\times 10^{-3}$ & 0.926   & 2.905 \\
                               & $\bm{x}\sim$uniform$[0,1]$ for any $\bm{x}\in\mathcal{P}_0$. &  0.997    &$2.796\times 10^{-6}$  &2.815\\
Cross-Entropy & $\bm{x}\sim $uniform$[-1,1]$ for any $\bm{x}\in\mathcal{P}_0$.                 &$2.769\times 10^{-6}$  &  $1.000$  & 9.720 \\
                               & $\bm{x}\sim$uniform$[0,1]$ for any $\bm{x}\in\mathcal{P}_0$. &  0.997    &$2.796\times 10^{-6}$  &2.815\\
\bottomrule
\end{tabular}
\footnotetext{Total Number of Generations: 30000. $\bm{m}_1 = [-.5,-.5,\cdots,-.5]$. $\bm{m}_2 = - \bm{m}_1$.}
\footnotetext{\textbf{Notation.} $\mathcal{P}_0$ denotes the initial population. For any real random vector $\bm{z}$ and any two real scalar $a<b$, $\bm{z} \sim$ uniform$[a, b]$ denotes that each entry of $\bm{z}$ is sampled uniformly from the interval $[a, b]$.}
\end{table}

\subsection{Experiments on Inverse Problems}
\label{simulation}

In this section, we apply REINFORCE-OPT (i.e., Algorithm~\ref{alg:Framwork1}) to solve two typical non-linear inverse problems. In these two examples, we assume that a signal $\bm{y}^\delta$ in (\ref{inversep}) is observed. We set the objective function as
\begin{equation}
\label{reward-def}
\mathcal{L}(\bm{x}):= \frac{1}{\|f(\bm{x})-\bm{y}^{\delta} \|^2+\alpha \Omega(\bm{x})+0.001},
\end{equation}
where 0.001 is added to prevent the denominator from being a zero. We use this objective rather than (\ref{neg-obj}) because it works better in our experiments. As for the functional form of the policy, we set $\pi_{\bm{\theta}}(\cdot|\bm{x})$ as a multivariate Gaussian distribution with its mean and covariance equal to the output of a neural network $\mathcal{N}_{\bm{\theta}}$ with input $\bm{x}$ and weights $\bm{\theta}$. 

\subsubsection{Auto-convolution Equation}
\label{Ex1}

In the first group of numerical simulations, we consider the following 1D auto-convolution equation,
\begin{equation}
\label{AutoConv}
\int_{0}^{t} x(t-s) x(s) \,ds = y(t).
\end{equation}
which has many applications in spectroscopy (e.g., the modern methods of ultrashort laser pulse characterization) \cite{Baumeister1991, GerthHofmann2014}, the structure of solid surfaces \cite{Dai2013}, and nano-structures \cite{Fukuda2010}.
To the best of our knowledge, the results regarding the existence of $x(t)$ given $y(t)$ for the non-linear integral equation (\ref{AutoConv}) are quite limited. Hence, to avoid the possibility that there does not exist any $x(t)$ corresponding to a randomly chosen $y(t)$, for our experiment we pre-define
\begin{equation}
\label{AutoConv-x}
x(t) = 10 t (1-t)^2,
\end{equation}
and let $y(t)$ be the corresponding right-hand side in (\ref{AutoConv}).


We test the ability of REINFORCE-IP to numerically solve a discretized version of $(\ref{AutoConv})$ given noised $y(t)$. Let $f$ be the map from $\bm{x}=[x(0),x(\frac{1}{D-1}),x(\frac{2}{D-1}),...,x(1)]\in\mathbf{R}^D$, the discretization of $\{x(t)\}_{t\in[0,1]}$, to $\bm{y}=[y_0,y_1,...,y_{D-1}]\in \mathbb{R}^D$, a discrete-time approximation to $\{ y(t)\}_{t\in[0,1]}$, where $(x(t),y(t))$ satisfy Eq. (\ref{AutoConv}). Here, $D\in\mathbb{Z}^+$ denotes the number of discrete points, and we set it as $D=64$ in the experiment. The approximation $\bm{y}=f(\bm{x})$ is computed in the following way\footnote{From Eq. (\ref{AutoConv}), for any $j\in\{1,2,...,D-1\}$
$$y(t_j) = \sum_{i=1}^{j}\int_{t_{i-1}}^{t_i} x(t_j-s)x(s)ds\approx \frac{1}{D-1} \sum_{i=1}^{j}\frac{1}{2} \left(x(t_j-t_i)x(t_i)+x(t_j-t_{i-1})x(t_{i-1})\right)=y_j.$$
}: $y_0=0$, and for any $j\in\{1,2,...,D-1\}$,
$$ y_j = \frac{1}{D-1}\sum_{i=1}^{j} \frac{1}{2}\left(x\left(t_j-t_i\right)x\left(t_i\right)+x\left(t_j-t_{i-1}\right)x\left(t_{i-1}\right)\right),$$
where $t_j:=j/(D-1). $
The inverse problem we will use REINFORCE-IP to solve is
\begin{align}
\label{inversep-exp1}
f(\bm{x}) + \bm{\epsilon} = \bm{y}^{\delta},
\end{align}
where $\bm{y}^{\delta}=f(\mathbf{x}_e)+\bm{\epsilon}$, $\bm{\epsilon}$ is mean-zero Gaussian noise, and
\begin{equation}
\label{xe}
\mathbf{x}_e=[x(0),x(\frac{1}{N-1}),x(\frac{2}{N-1}),...,x(1)]\in\mathbb{R}^D
\end{equation}
with $x(t)$ defined in Eq. (\ref{AutoConv-x}) with $D=64$. This inverse problem has a unique solution $\mathbf{x}_e$ in $U^+:=\{\bm{x}\in \mathbb{R}^D: \bm{x} \text{ has non-negative entries} \}$, as implied by the following theorem on (\ref{AutoConv}), which further implies there are in total two solutions to (\ref{inversep-exp1}) in $\mathbb{R}^D$, $\mathbf{x}_e$ and $-\mathbf{x}_e$.

\vskip 0.1in 

\begin{theorem}
\label{ThmUnique}

  \emph{ (i) (Weak uniqueness \cite{GerthHofmann2014})} For $y\in L^2[0,2]$, the integral equation has at most two different solutions $x_1(t),x_2(t) \in L^1[0,1]$. Moreover,  $x_1(t)+x_2(t)=0, t \in [0,1]$ holds.

 \emph{(ii) (Uniqueness \cite{ WOS:A1994NF91100011})} Define
        \begin{align*}
             S^{+}:&=\{x \in L^1[0,1]:x(t) \ge 0  \}, \\
             S_{\epsilon}^{+}:&=\{ x \in L^1[0,1]: \epsilon=\sup\{s: x(t)=0, \text{~a.e. ~in~} [0,s]\} \}, \\
             Y_{\epsilon}^{+}:&=\{ y \in L^2[0,2]: \epsilon=\sup\{s: y(t)=0, \text{~a.e. ~in~} [0,s]\} \}.
        \end{align*}
        Then, for $y\in Y_0^{+}$, the equation (\ref{AutoConv}) has at most one solution in $S^{+}$. Moreover, the solution belongs to $S_0^{+}$.

\end{theorem}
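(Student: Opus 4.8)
Both assertions are quoted from \cite{GerthHofmann2014} and \cite{WOS:A1994NF91100011}; the cleanest route to (i) that I would take is via the Laplace transform. First I would extend any solution $x\in L^1[0,1]$ by zero to all of $\mathbb{R}$, so that (\ref{AutoConv}) becomes the genuine convolution identity $x*x=y$ with $y$ supported in $[0,2]$. Since $x$ has compact support, its Laplace transform $\widehat{x}(p):=\int_0^1 e^{-pt}x(t)\,dt$ is entire in $p\in\mathbb{C}$, and Fubini gives $\widehat{x}^{\,2}=\widehat{y}$. Then, given two solutions $x_1,x_2$ corresponding to the same $y$, I would write $(\widehat{x_1}-\widehat{x_2})(\widehat{x_1}+\widehat{x_2})\equiv 0$ on $\mathbb{C}$ and use that the ring of entire functions is an integral domain to conclude that one of the two factors vanishes identically; injectivity of the Laplace transform on $L^1[0,1]$ then forces $x_1=x_2$ a.e.\ or $x_1=-x_2$ a.e.\ on $[0,1]$, which is exactly statement (i).

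For the uniqueness half of (ii) I would combine (i) with positivity. If $x_1,x_2\in S^+$ both solve (\ref{AutoConv}) for some $y\in Y_0^+$, then by (i) either $x_1=x_2$ a.e., or $x_2=-x_1$ a.e.; in the latter case $x_1\ge 0$ and $-x_1\ge 0$ force $x_1=0$ a.e., hence $y\equiv 0$. But $y\in Y_0^+$ means $\sup\{s: y=0\text{ a.e.\ on }[0,s]\}=0$, so $y$ vanishes on no interval $[0,s]$ with $s>0$, in particular $y\not\equiv 0$ --- a contradiction. Hence $x_1=x_2$ a.e., so there is at most one solution in $S^+$.

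For the remaining claim --- that this solution lies in $S_0^+$ --- the plan is a support estimate for the auto-convolution. If $x=0$ a.e.\ on $[0,\epsilon]$, then for every $t\le 2\epsilon$ and $s\in[0,t]$ one has $s+(t-s)=t\le 2\epsilon$, so at least one of $s$ and $t-s$ lies in $[0,\epsilon]$ and the integrand $x(t-s)x(s)$ vanishes; consequently $y=0$ a.e.\ on $[0,2\epsilon]$. Writing $\epsilon_x:=\sup\{s: x=0\text{ a.e.\ on }[0,s]\}$ and letting $\epsilon\uparrow\epsilon_x$, this yields $\sup\{s: y=0\text{ a.e.\ on }[0,s]\}\ge 2\epsilon_x$; since $y\in Y_0^+$ forces the left-hand side to be $0$, we conclude $\epsilon_x=0$, i.e.\ $x\in S_0^+$.

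I expect the only genuinely non-routine point to be the analytic/algebraic step inside (i): that if a product of two entire functions is identically zero then one of the factors is identically zero (the identity theorem on the connected domain $\mathbb{C}$), together with injectivity of the Laplace transform on $L^1[0,1]$. Everything else is elementary support bookkeeping. As a cross-check for the case of an analytic solution, one could instead match Taylor coefficients of $x$ and of $y$ recursively, which determines the coefficients of $x$ up to the sign of its first non-vanishing coefficient and so reproduces the same two-fold ambiguity.
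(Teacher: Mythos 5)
The paper does not prove Theorem \ref{ThmUnique} at all: both items are imported verbatim from the cited references (\cite{GerthHofmann2014} for the weak uniqueness and \cite{WOS:A1994NF91100011} for the uniqueness in $S^+$), so there is no in-paper argument to compare yours against. On its own merits, your sketch is correct and essentially complete. For (i), extending $x$ by zero, noting that the Laplace transform of a compactly supported $L^1$ function is entire, using $\widehat{x*x}=\widehat{x}^{\,2}$ (Fubini is justified on the compact support), factoring $\widehat{x_1}^{\,2}-\widehat{x_2}^{\,2}$, invoking that the entire functions form an integral domain (identity theorem on $\mathbb{C}$), and finishing with injectivity of the Laplace/Fourier transform on $L^1$ is the standard and rigorous route; it delivers exactly the dichotomy $x_1=x_2$ or $x_1=-x_2$ a.e. For (ii), your observation that $y\in Y_0^+$ excludes $y\equiv 0$ (since $y\equiv 0$ would put $y$ in $Y_2^+$) correctly rules out the sign-flipped case when both solutions are nonnegative, and the elementary support estimate $x=0$ a.e.\ on $[0,\epsilon]\Rightarrow y=0$ a.e.\ on $[0,2\epsilon]$ (because $\min(s,t-s)\le t/2\le\epsilon$ for every $s\in[0,t]$) gives $\epsilon_y\ge 2\epsilon_x$ and hence $x\in S_0^+$; no appeal to the Titchmarsh convolution theorem is needed once (i) is in hand, which makes your derivation of (ii) arguably more elementary than the original source. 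The only cosmetic caveat is that your argument establishes (ii) for solutions in $S^+\cap L^1$ via (i), whereas the cited theorem is stated independently of (i); since (i) covers all $L^1[0,1]$ solutions, this is harmless.
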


Next, we state some common assumptions and settings for the REINFORCE-OPT algorithm applied in all our later experiments to solve (\ref{inversep-exp1}). Assume that instead of the exact signal $f(\mathbf{x}_e)$, a set $\{ \bm{y}_k^{\delta} \}_{k=1}^{100}$ of noised observations of the right-hand side in (\ref{inversep-exp1}) are available, and these observations are generated according to $\bm{y}^{\delta}_k:= f(\mathbf{x}_e) + \bm{\epsilon}_k$, where $\bm{\epsilon}_k\sim \mathcal{N}(\bm{0},\Sigma_k)$ is the Gaussian noise whose covariance $\Sigma_k$ is a diagonal matrix. The square roots of the diagonals in $\Sigma_k$ are set as $1\%\mathbf{y}_e$. 

We construct the policy $\pi_{\theta}(\cdot|\bm{x})$, a distribution over the action space determined by the given state $\bm{x}$, as a multivariate Gaussian distribution whose mean and standard deviation are transformed output\footnote{Let $[\mathbf{z}_1, \mathbf{z}_2]$ be an output of the neural network $\mathcal{N}_{\bm{\theta}}(\bm{x})$. Then, we let the moving average of $\mathbf{z}_1$ with a window length of 3 to serve as the mean of $\pi_{\bm{\theta}}(\cdot|\bm{x})$, and $\log(1+e^{\mathbf{z}_2})$ serve as the standard deviation. The moving average is used to ensure the smoothness of the predictions of $\mathbf{x}_e$. The function $\log(1+e^{\mathbf{z}_2})$ is called softplus and is a typical method to force outputs from a neural net to be positive.} from a feed-forward neural network $\mathcal{N}_{\bm{\theta}}$ with input $\bm{x}$.

As a common practice in the literature of solving (\ref{inversep-exp1}), assume that we are given the information that the first and last entry in the solution $\mathbf{x}_e$ are all zero. To use this information, we set the regularizer in (\ref{reward-def}) as
\begin{equation*}
\Omega(\bm{x}) = |x_0|+|x_{D-1}|.
\end{equation*}

\subsubsection{Simulation I}
\label{Simu1}

Suppose we are given the additional information that there is a solution to (\ref{inversep-exp1}) in $U^+$. To use it, we let the initial state in any trajectory generated by $\pi_{\bm{\theta}}$ be a fixed point in $U^+$, which is randomly chosen as $\bm{x}_0=[0.01,0.01,...,0.01]\in\mathbb{R}^{64}$. This setting increases the likelihood of the RL algorithm finding the solution $\mathbf{x}_e$ in $U^+$, since $\bm{x}_0$ is closer to the solution in $U^+$ (and hence easier to be found by the agent $\pi_{\bm{\theta}}$) than any solution not in $U^+$. The experiment result shows that this setting is effective, since all the produced solution estimates are approximations of $\mathbf{x}_e\in U^+$ (see Figure \ref{example1-1obs}).

We follow the REINFORCE-IP algorithm, i.e., Algorithm \ref{alg:Framwork}, to repeatedly update $\bm{\theta}$, until the sum of rewards\footnote{Specifically, after every 100 updates of $\bm{\theta}$, we use $\pi_{\bm{\theta}}$ to generate $L=1,000$ trajectories $\{ \bm{x}_{l,t} \}_{t= 0}^{T-1}$ for $l\in\{0,2,...,L-1\}$, take the mean of each step to get $\{ \bar{\bm{x}}_t \}_{t=0}^{T-1}$, and compute the sum $\sum_{t=0}^{T-1}R(\bar{\bm{x}}_t,\bm{0})$.
This sum is considered the performance of $\pi_{\bm{\theta}}$.} in a trajectory generated by the policy exceeds a threshold $\text{H}_0$ or the total number of $\theta$-updates exceeds 20,000, whichever comes first. Intuitively, this process keeps decreasing the denominator in Eq. (\ref{reward-def}). 

The value of the stop-training-threshold $\text{H}_0$,  $T$ (the number of total time steps taken in a trajectory), $\alpha$, $\beta$ (recall (\ref{rlgoal})), and hyper-parameters of the neural net $\mathcal{N}_{\theta}$ are all selected through the validation-set approach. The selected values are reported below Figure \ref{example1-poslog}, which plots the performance of $\pi_{\bm{\theta}}$ during the training process. 

\begin{figure}[htb]
	\centering
	\includegraphics[scale=0.4]{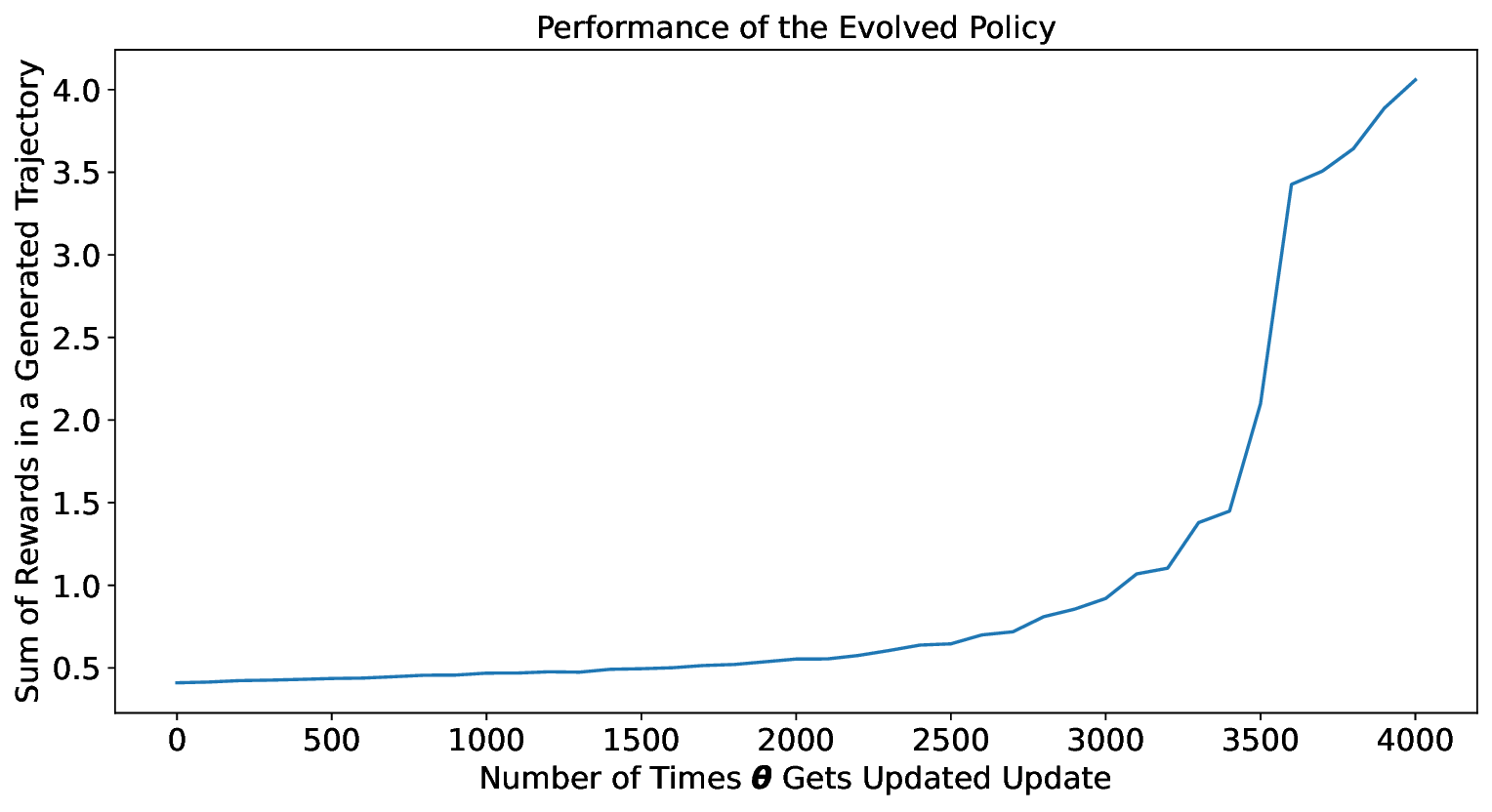}
       	\caption{Performance of the Evolving Policy for Simulation One. After every 100 updates of $\bm{\theta}$-update, we calculate the performance of $\pi_{\bm{\theta}}$. For model hyper-parameters, we set $T=10$, $\alpha=0.2$, $\beta=0.125$, and $\text{H}_0=4.0$ (the performance threshold for stopping training). The hidden layers of $\mathcal{N}_{\bm{\theta}}$ are $(64,64,64)$ with the activation function ReLU. The learning rate is set as $a_n:=\frac{0.001}{50,000+n}$.}
\label{example1-poslog}
\end{figure}

\begin{figure}[htb]
	\centering
	\includegraphics[scale=0.35]{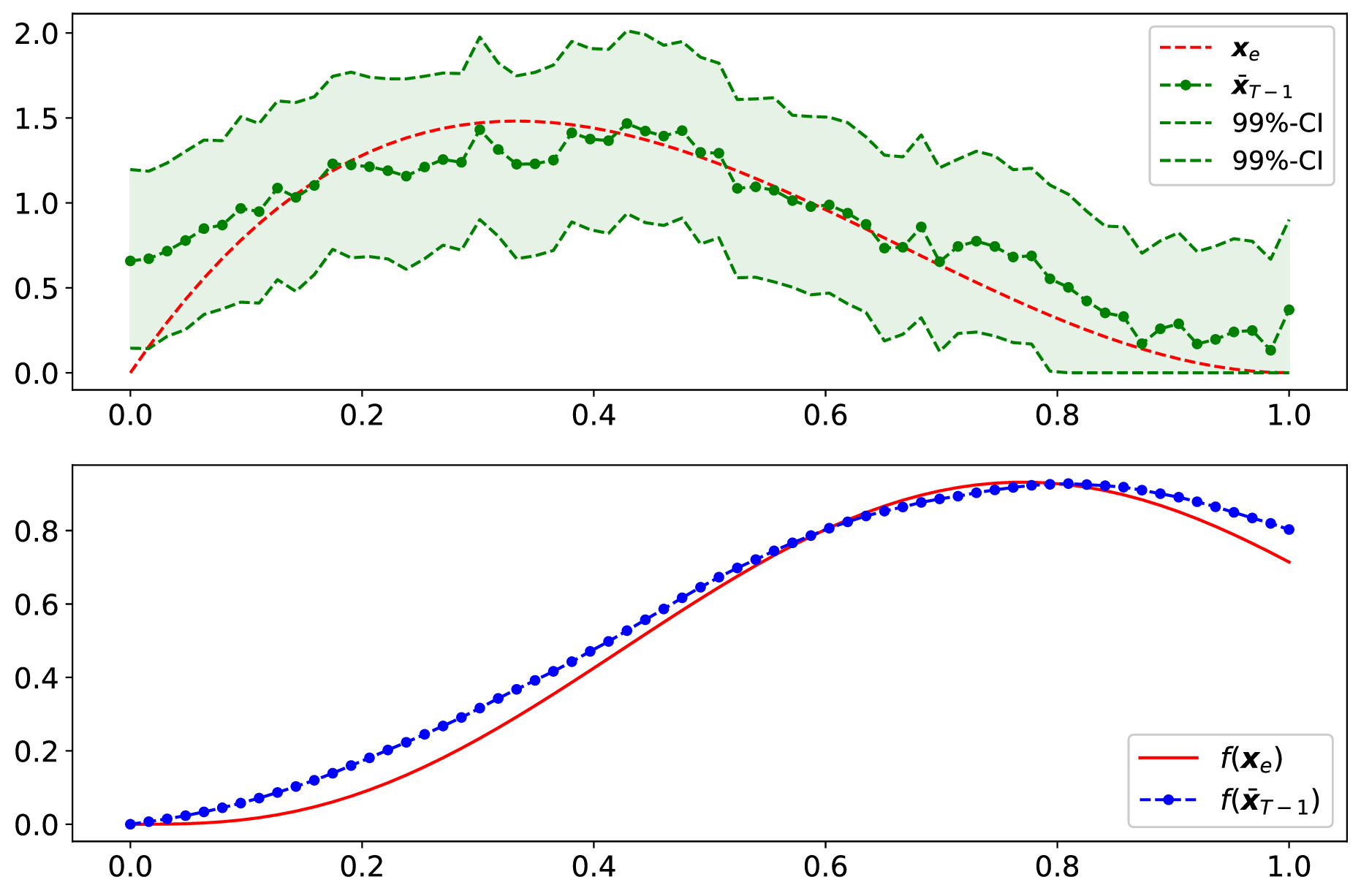}
       	\caption{The Estimations and True Values for Simulation I. $\bar{\mathbf{x}}_{T-1}$ is the mean of 10,000 solution estimates, and the region bounded by the two dashed curves of $99\%$-CI is the confidence interval of the solution estimate obtained by bootstrapping. Since we are looking for a solution in $U^+$, negative entries in the lower bound of the $99\%$-CI are replaced with zeros.}
\label{example1-1obs}
\end{figure}
We now investigate the ability of the trained policy $\pi_{\bm{\theta}}$ to find the solution to (\ref{inversep-exp1}). With $\pi_{\bm{\theta}}$, we generate $L=10,000$ trajectories of length $T=10$ and record the last state $\bm{x}_{T-1}$ in each trajectory (so that we have $\{\bm{x}_{l,T-1}\}_{l=1}^{N}$ and each $\bm{x}_{l,T-1}$ can be viewed as an estimated solution). The $R^2$ between $\mathbf{x}_e$ and the average $\bar{\mathbf{x}}_{T-1}$ of the $N$ estimate is 85.3\%, indicating that $\pi_{\bm{\theta}}$ is a successful policy for searching for a solution to the inverse problem. Their graphs, as well as the corresponding forward signals $f(\mathbf{x}_e)$ and $f(\hat{\mathbf{x}}_{T-1})$, are plotted in Figure \ref{example1-1obs}. The shaded region bounded by the two dashed green curves is the estimated $99\%$-confidence interval obtained by the method of bootstrapping\footnote{Specifically, we bootstrap from the estimates $\{\bm{x}_{l,T-1}\}_{l=1}^{1,000}$ for $B=10,000$ times, compute the mean of each bootstrap copy, and obtain the 0.5\%-percentile and 99.5\%-percentile of these 10,000 means as the lower and upper bound for the estimated 99\%-confidence interval.} (e.g., \cite{Puth2015}). That is, we estimate that for each entry $i\in\{1,2,...,D\}$ this shaded region contains the $i^{th}$ entry of the true solution $\mathbf{x}_e$ with a probability of $99\%$.

\subsubsection{Simulation II}
\label{Simu2}

In our second group of simulations, we aim to find only one solution (any one of two solutions) to the inverse problem (\ref{inversep-exp1}) without the a priori information of solutions. Hence, as a common practice in iterative algorithms, we fix the initial state $\bm{x}_0$ at the origin $\bm{0}\in\mathbb{R}^D$.

\begin{figure}[htb]
	\centering
	\includegraphics[scale=0.4]{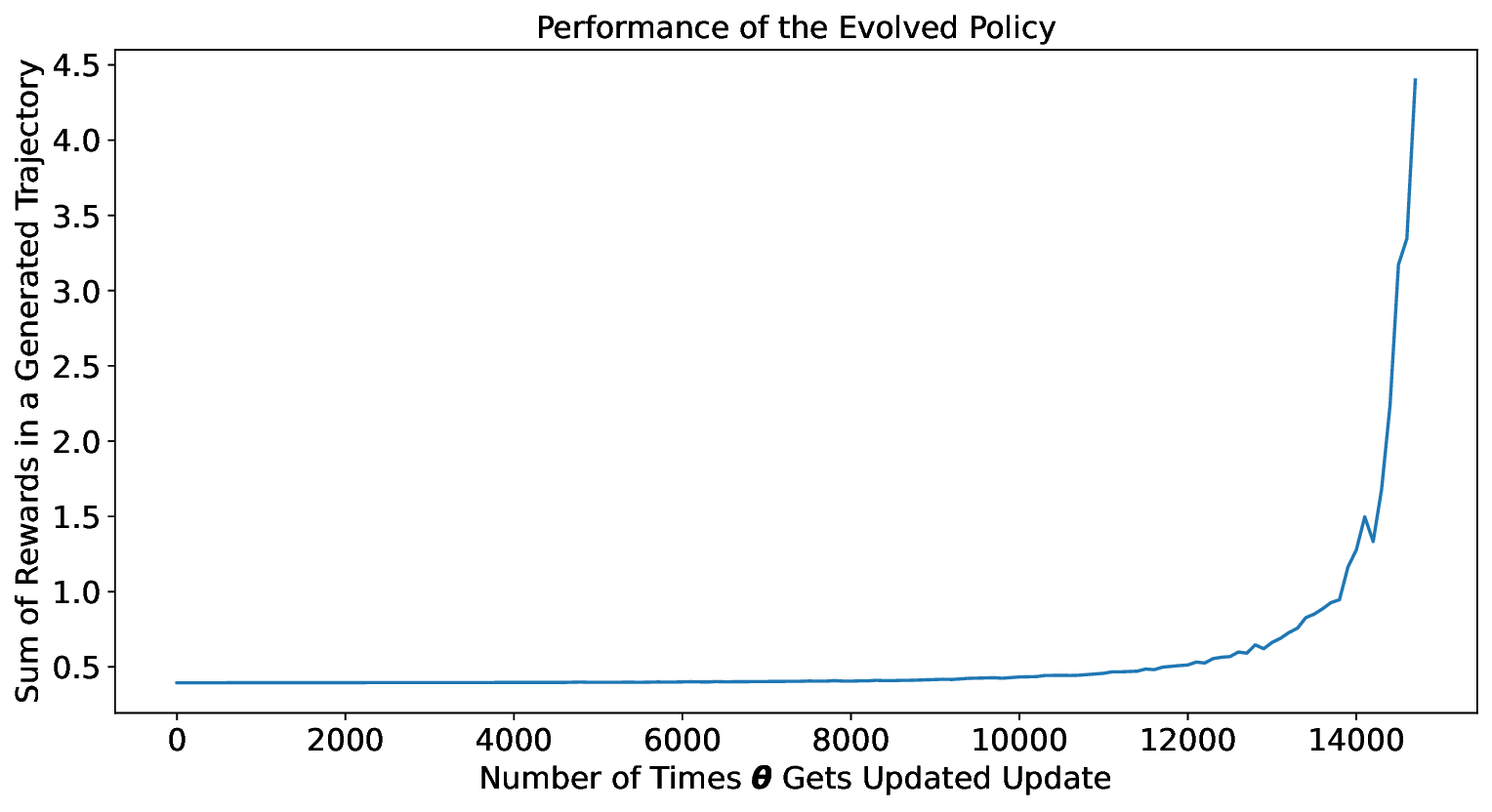}
       	\caption{Performance of the Evolving Policy for Simulation II. The procedures and hyper-parameter values are identical to those in Figure \ref{example1-poslog}, except that $\alpha=0.1$ and the initial state $\bm{x}_0$ for each generated trajectory is fixed at $\bm{0}\in\mathbb{R}^D$.}
\label{example1-neglog}
\end{figure}

The procedures for training and
testing $\pi_{\bm{\theta}}$ are identical to those in Simulation One. The history of $\pi_{\bm{\theta}}$'s performance in the $\bm{\theta}$-updating process is reported in Figure \ref{example1-neglog}, and the solution estimates by the trained model $\pi_{\bm{\theta}}$ are plotted in Figure \ref{example1-negobs}. The $R^2$ statistics between $-\mathbf{x}_e$ and the estimates average $\bar{\bm{x}}_{T-1}$ is $86.7\%$, indicating the success of RL algorithm in finding a solution to (\ref{inversep-exp1}).

Note that in Figure \ref{example1-negobs} the agent $\pi_{\bm{\theta}}$ finds an approximation to $-\mathbf{x}_e$. However, as we repeat this experiment more times (each experiment trains $\pi_{\bm{\theta}}$ independently), we end up with an approximation to $\mathbf{x}_e$, at a chance of approximately $50\%$.

\begin{figure}[H]
	\centering
	\includegraphics[scale=0.38]{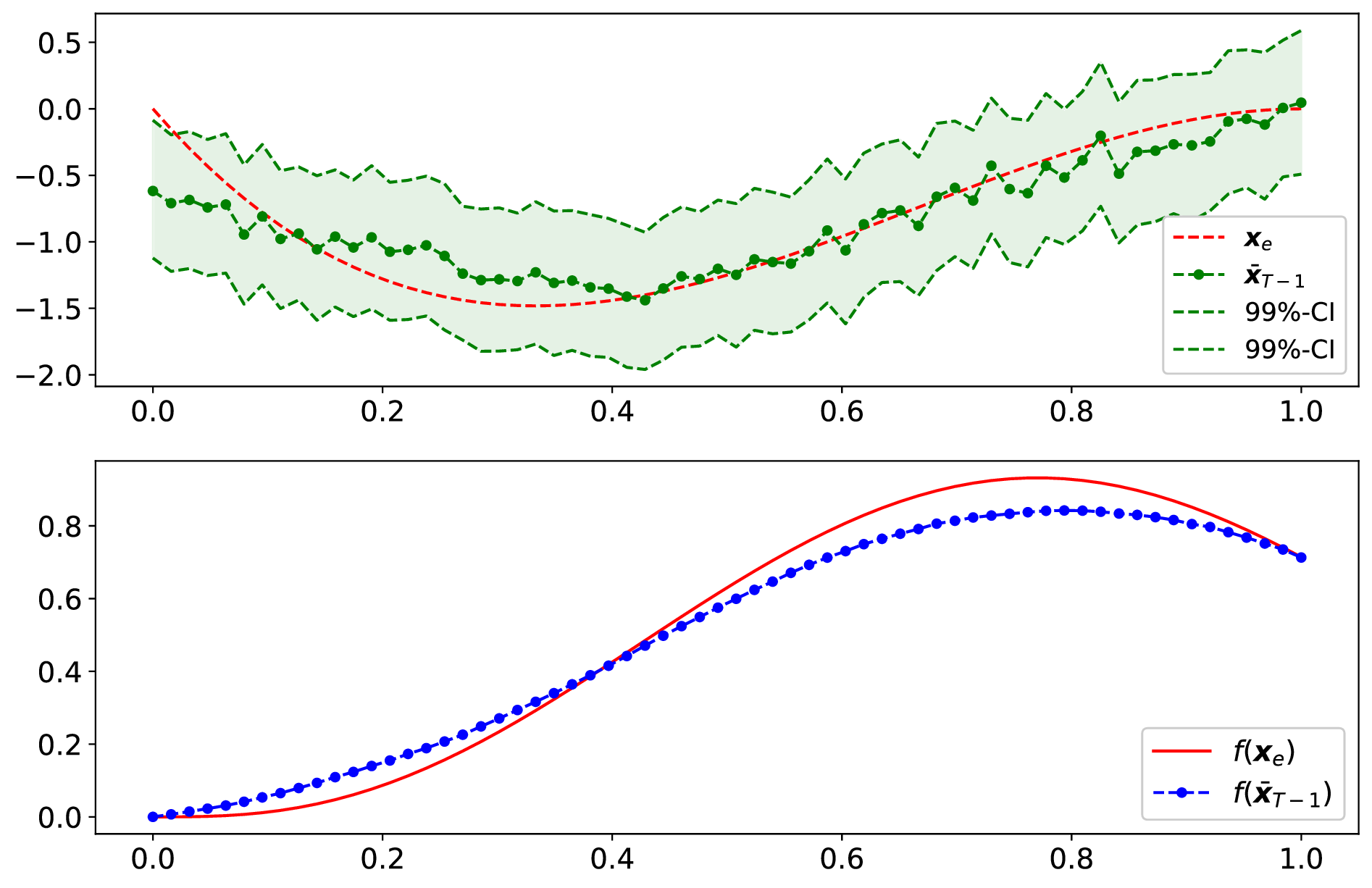}
       	\caption{The Estimations and True Values for Simulation II. The procedures for producing the plots and the symbols are the same as those in Figure \ref{example1-poslog}.}
\label{example1-negobs}
\end{figure}


\subsubsection{Simulation III}
\label{Simu3}

In the last group of simulations, we perform another experiment to illustrate that with a proper setting of the initial state's distribution, REINFORCE-IP is able to find good approximations of both of the two solutions without the need to retrain the model. Suppose we are given the a priori information that one solution is near $3\mathbf{x}_e/4$ and the other is near $-3\mathbf{x}_e/4$ (but we do not know that the true solutions are equal to $4/3$ of the given vectors). With this information, in the $\bm{\theta}$-updating process we set initial state $\bm{x}_0$ of each trajectory as either $3\mathbf{x}_e/4$ or $-3\mathbf{x}_e/4$, each with a probability of $50\%$.

Figure \ref{example1-2log} shows the performance of the policy $\pi_\theta$ that is updated repeatedly according to Eq. (\ref{REINFORCE}) and the performance is computed by the following procedure: (i) Use $\pi_{\bm{\theta}}$ to generated $L=1000$ trajectories of length $T=10$. (ii) The last-step state $\bm{x}_{T-1}$ in each trajectory is taken as an estimate of the solution, and hence we have 1000 estimates, $\{\bm{x}_{l,T-1}\}_{l=1}^{1000}$. (iii) The K-mean algorithm is then applied to divide $\{\bm{x}_{l,T-1}\}_{l=1}^{1000}$ to two groups. Compute the mean of each group, denoted by $\bar{\bm{x}}_1$ and $\bar{\bm{x}}_2$. (4) The performance of $\pi_{\bm{\theta}}$ is computed as $(R(\bar{\bm{x}}_1,\bm{0})+R(\bar{\bm{x}}_2,\bm{0}))/2$.

\begin{figure}[H]
	\centering
	\includegraphics[scale=0.4]{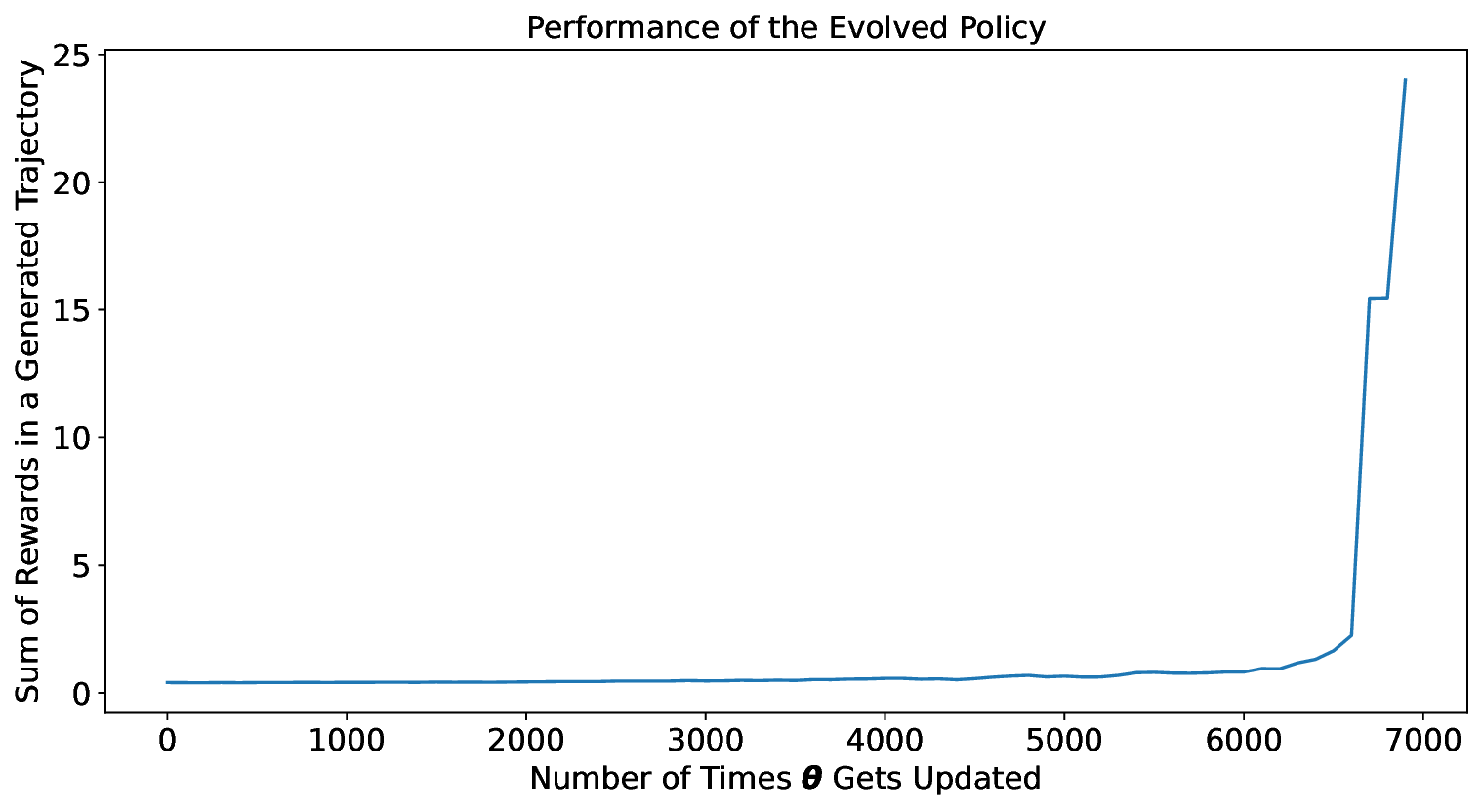}
       	\caption{Performance of the Evolved Policy for Simulation III. Compared with Simulations One and Two, the main difference lies in the distribution of the initial state $\bm{x}_0$ and the method of calculating the performances of $\pi_{\bm{\theta}}$. The threshold for stop training is set as $\text{H}_0=20$.}
\label{example1-2log}
\end{figure}

\begin{figure}[H]
	\centering
	\includegraphics[scale=0.36]{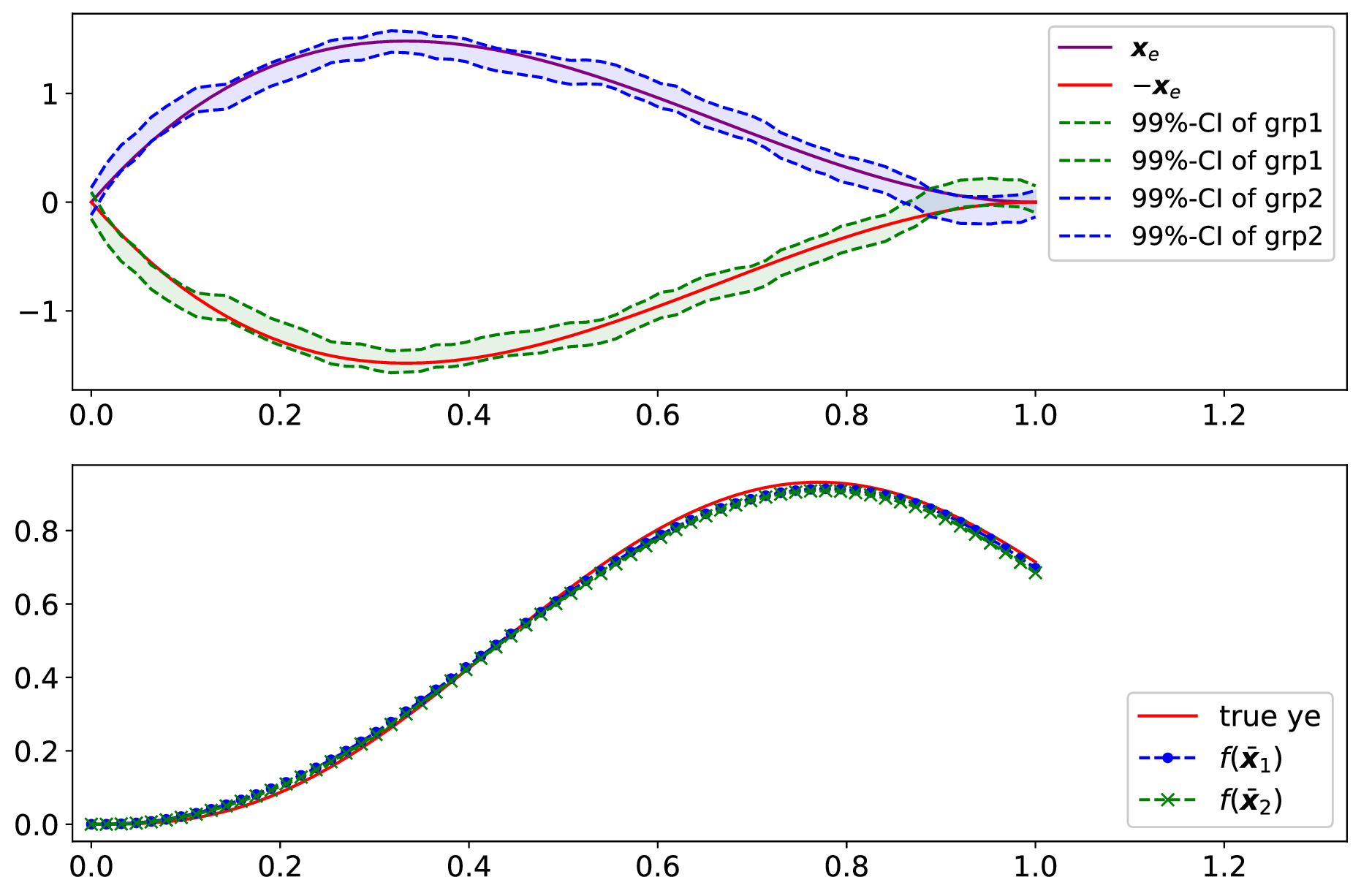}
       	\caption{Upper: Uncertainty quantification of approximate solutions by REINFORCE-OPT; Lower: comparison between simulated right-hand side and the exact right-hand side.}
\label{example1-2obs}
\end{figure}

To test the capacity of the trained policy to find solutions to (\ref{inversep-exp1}), as in the previous experiment, we generate $L=10,000$ trajectories of length $T=10$ with the trained agent $\pi_{\bm{\theta}}$. However, in this experiment the initial state $\mathbf{x}_0$ takes the value of $3\mathbf{x}_e/4$ or $-3\mathbf{x}_e/4$, each with a probability of $50\%$. The last-step state $\bm{x}_{T-1}$ in each trajectory is taken as an estimate of the solution, and hence we have 10,000 estimates, $\{\bm{x}_{l,T-1}\}_{l=1}^{10000}$. The K-mean algorithm is then applied to divide $\{\bm{x}_{l,T-1}\}_{l=1}^{10000}$ into two groups. Then, for each group we use the bootstrapping method (the same one used in the previous experiment) to estimate the 99\% confidence interval of each entry in our $\mathbf{x}_e$-estimate. The results are plotted in Figure \ref{example1-2obs}, where the region bounded by the two dashed blue curves is the estimated $99\%$-confidence interval based on estimates in group 2 (whose initial state is fixed at $0.75\mathbf{x}_e$). The region bounded by the two dashed green curves is for the sample means in group 1 (whose initial state is fixed at $-0.75\mathbf{x}_e$). Hence, in accordance with the definition of a confidence interval, each entry of the true solution lies in the shaded region with a probability of $99\%$.

The lower half of Figure \ref{example1-2obs} plots $\mathbf{y}_e$, $f(\bar{\mathbf{x}}_1)$, and $f(\bar{\mathbf{x}}_2)$, where $\bar{\mathbf{x}}_i$ denotes the mean of group-$i$ estimates, for $i=1$ or 2. The $R^2$-statistic between $\mathbf{x}_e$ and $\bar{\mathbf{x}}_1$ is $97.7\%$ and the $R^2$-statistic between $-\mathbf{x}_e$ and $\bar{\mathbf{x}}_2$ is $98.4\%$, both of which are high. This indicates the capability of the RL algorithm to produce all the solutions to the inverse problem (\ref{inversep-exp1}).


\subsection{Parameter Identification in Nonlinear PDE of Liquid Chromatography}
\label{Ex2}
\subsubsection{Experiment Background}

In the final group of numerical experiments, we consider an inverse problem in mathematical physics where the forward mapping has no closed form. Specifically, we examine the following mass balance equation for a two-component system in a fixed-bed chromatography column with the Danckwerts boundary condition, a common model in column chromatography \cite{zhang2016regularization,LinZhang2018}:
\begin{equation}
  \left\{\begin{array}{ll}
    \frac{\partial C_{i}}{\partial t}+F \frac{\partial q_{i}}{\partial t}+u \frac{\partial C_{i}}{\partial x}=D_{a} \frac{\partial^{2} C_{i}}{\partial x^{2}}, & x \in \mathcal{X} \equiv[0, L_1], t \in (0,T_1] \\
    C_{i}(x, 0)=g_{i}(x), & x \in \mathcal{X}, t=0 \\
    u C_{i}(0, t)-D_{a} \frac{\partial C_{i}(0, t)}{\partial x}=u h_{i}(t), & x=0, t \in (0,T_1] \\
    D_{a} \frac{\partial C_{i}(L_1, t)}{\partial x}=0, & x=L_1, t \in (0,T_1]
    \end{array}\right.,
    \label{eq:solver}
\end{equation}
where $x$ is distance, $t$ is time, and $i = 1,\,2$ refers to the two components. $C$ and $q$ are the concentration in the mobile and stationary phase, respectively, $u=0.1061$ is the mobile phase velocity, $F=0.6667$ is the stationary-to-mobile phase ratio, and $D_{a}=Lu/2N_x$ ($N_x=2000$ denotes the number of theoretical plates) is the diffusion parameter. $L_1=10$ is the length of the chromatographic column, and $T_1$ is an appropriate time point slightly larger than the dead time of chromatographic time $T_0 = L_1/u$. In this work, we set $T_1 = 1.5 T_0$. In addition, $g(x)= [0, 0]^T$ is the initial condition and $h(t)= [5; 5] \cdot H(10-t)$ ($H(\cdot)$ is the Heaviside step function) is the boundary condition, which describes the injection profile in the experiment.

For the numerical experiments, we focus on the case when the adsorption energy distribution is bimodal, namely when the bi-Langmuir adsorption isotherm is adopted:
\begin{eqnarray}\label{eq:q}
  q_{1}\left(C_{1}, C_{2}\right)=\frac{a_{I, 1} C_{1}}{1+b_{I, 1} C_{1}+b_{I, 2} C_{2}}+\frac{a_{I I, 1} C_{1}}{1+b_{I I, 1} C_{1}+b_{I I, 2} C_{2}}, \\
  q_{2}\left(C_{1}, C_{2}\right)=\frac{a_{I, 2} C_{2}}{1+b_{I, 1} C_{1}+b_{I, 2} C_{2}}+\frac{a_{I I, 2} C_{2}}{1+b_{I I, 1} C_{1}+b_{I I, 2} C_{2}}, \notag
\end{eqnarray}
where subscripts I and II refer to two adsorption sites with different adsorption energy.

For convenience of notation, the collection of adsorption isotherm parameters is denoted by
\begin{equation}\label{eq:parameters}
\mathbf{x} = (a_{I,1},a_{II,1},b_{I,1},b_{II,1}, a_{I,2},a_{II,2}, b_{I,2},b_{II,2})^T.
\end{equation}

Next, we consider the structure of the measurement data. In most laboratory and industrial settings, the total response $R(\mathbf{x},t)$ is observed at the column outlet $x=L_1$, with
\begin{equation}\label{eq:data}
R(\mathbf{x},t) =\sum_{i=1}^2 C_i(L_1,t),
\end{equation}
where $C(x,t)$ is the solution of problem (\ref{eq:solver}) with the bi-Langmuir adsorption-isotherm model (\ref{eq:q}), and $C_i(L_1,t)$ represents the concentration of the $i$-th component at the outlet $x=L_1$.  The exact collected data at time grid $\{t_j\}^{T_1}_{j=1} (T_1=800)$ at the outlet is denoted by
\begin{equation}\label{eq:data2}
\mathbf{y}_e = \{R(\mathbf{x},t_j)\}^{T_1}_{j=1}.
\end{equation}

The parameter-to-measurement map $f$: $\mathbb{R}^{8} \to \mathbb{R}^{T_1}_+$ can be expressed as
\begin{equation}\label{eq:operator}
f (\mathbf{x}) = \mathbf{y}_e,
\end{equation}
where the model function $f$ is defined through (\ref{eq:data})--(\ref{eq:data2}). To be more precise, for a given parameter $\mathbf{x}$, a bi-Langmuir adsorption-isotherm model can be constructed according to formula (\ref{eq:q}). The concentration in the mobile phase, denoted as $C$, can then be obtained by solving the PDE in (\ref{eq:solver}). Finally, the experimental data can be collected using the designed sensor in accordance with the physical laws described in (\ref{eq:data}) and (\ref{eq:data2}). The goal of this subsection is to estimate the adsorption isotherm parameters $\mathbf{x}$  from the time-series data $\mathbf{y}_e$ using the integrated mathematical model (\ref{eq:operator}) and RL.

\subsubsection{Inverse Problem to be Solved by REINFORCE-OPT}

Since there is no analytical expression for the forward mapping $f$ in (\ref{eq:operator}), we approximate it using a neural network $\hat{f}$. It is trained using
\footnote{This feed-forward neural network $\hat{f}$ is trained in the same way as $F_{\text{fwn}}$ in \cite[Section 3.1]{XuZhang2022}.}
$160,000$ samples of (the eight parameters $\bm{x}$, the injection profile $\bm{h}$ with two entries\footnote{$\bm{h}$ is the finite analog of injection function $h(t)$ in PDE (\ref{eq:solver}).}, and the corresponding total response $\bm{y}$), where the samples of $(\bm{x},\bm{h})$ are uniformly and randomly drawn from $(0,100)^8\times(0,30)^2$, and the samples of $\bm{y}$ are computed using ($\bm{x},\bm{h}$)-samples by the forward system described in the previous subsection. The $R^2$ score of $\hat{f}$ on 40,000 testing samples is above $95\%$, indicating that it is a good approximation of $f$.

The inverse problem we will use REINFORCE-OPT to solve is
\begin{align}
\label{inversep-exp2}
\hat{f}(\bm{x},\textbf{h}) + \bm{\epsilon} = \bm{y}^{\delta},
\end{align}
where $\bm{y}^{\delta}=\hat{f}(\mathbf{x}_e,\mathbf{h})+\bm{\epsilon}$, $\mathbf{x}_e$ is randomly generated and its value is shown in Table \ref{chromatography-table}, $\textbf{h}$ is the randomly generated projection profile that is considered to be a known parameter (the randomly drawn value is $[10,21]$), and $\bm{\epsilon}$ is a mean-zero noise. Later, we write $\hat{f}(\bm{x})$ instead of $\hat{f}(\bm{x},\mathbf{h})$ since $\mathbf{h}$ is known and fixed.

As before, we assume that a set $\{ \bm{y}^{\delta}_k \}_{k=1}^{100}$ of noised signals is available, where for each $k$, $\bm{y}_k^{\delta}$ is obtained by first randomly shifting\footnote{Let $\mathbf{y}_e:=\hat{f}(\mathbf{x}_e)=[y_0, y_1, ..., y_{799}]$. Then, with a probability of $50\%$, $\mathbf{y}_{k}^{\text{shift}}=[y_1, y_2, ..., y_{799}, 0]$ (shifted to the left for one unit). With a probability of $50\%$, $\mathbf{y}_{k}^{\text{shift}}=[0, y_0, y_1, ..., y_{798}]$ (shifted to the right for one unit).} the entries in the vector $\mathbf{y}_e=\hat{f}(\mathbf{x}_e,\mathbf{h})$ by one unit to obtain $\bm{y}_{k}^{\text{shift}}$, after which we add an artificial noise to it:
\[ \bm{y}_k^{\delta} = \bm{y}_{k}^{\text{shift}}+ \bm{\epsilon}_k ,\]
where $\bm{\epsilon}_k$ is a vector of independent mean-zero Gaussian random variables with standard deviations equal to $1\%\bm{y}_k^{\text{shift}}$. According to \cite{XuZhang2022}, the shift error is one of the major errors in real-world experiments, and hence $\bm{y}_{k}^{\text{shift}}$ needs to be taken into account. Moreover, in our simulations, all $\bm{y}_k^\delta$ are re-arranged at the same time grids by using the spline technique because, in general, the time grids of the numerical solution of PDE (\ref{eq:operator}) do not fit the time grids of the observation data.

\subsubsection{Eexperiment and Results}

Since there is no prior information to use, the regularizer $\Omega(\bm{x})$ is set as 0. Conforming to (\ref{reward-def}), we define the reward function as
\begin{equation}
\label{denominator-R}
\mathcal{L}(\bm{x}) := \frac{1}{(f(\bm{x})-\bm{y}^{\delta})^2/ 800 + 0.001},
\end{equation}
where 800 is the dimension of $\bm{y}^\delta$. As stated in Section \ref{Ex1}, we construct the policy $\pi_{\theta}(\cdot|\bm{x})$ as a multivariate Gaussian distribution whose mean and standard deviation are outputs from a neural network $\mathcal{N}_{\bm{\theta}}$. 
Values of the hyper-parameters are selected through the validation-set approach and are reported in Table \ref{fnn-structure-example2}.

\begin{table}[!htb]
\caption{Structure of $\mathcal{N}_{\bm{\theta}}$ and Hyper-parameter Values}
\label{fnn-structure-example2}
\centering%
\begin{tabular}{ p{0.3cm} p{0.2cm} p{3cm}  p{1.5cm}  p{0.7cm} p{2.8cm} p{0.3cm} }
\midrule
$T$ & $\alpha$ & Hidden Layers-Nodes  &Activation  &  $\beta$ & Learning Rate &$\text{H}_0$ \\
\toprule
$10$ & $0.1$ & $(128, 64, 16)$  	 &relu           &0.01    &$a_n:=\frac{0.001}{100,000+n}$ & 0.45 \\
\bottomrule
\end{tabular}
\end{table}

We repeatedly train the neural net until the sum of rewards in a trajectory generated by the policy stops improving\footnote{Specifically, we log the sum of rewards in a trajectory every 100 updates of $\theta$. If the logged value does not improve for 20 logs, we stop training.} or the total number of $\theta$-updates exceeds 7,500, whichever comes first.  Figure \ref{example2-log} shows the performance of the policy $\pi_\theta$ that is updated repeatedly in the training process.
\begin{figure}[H]
	\centering
	\includegraphics[scale=0.4]{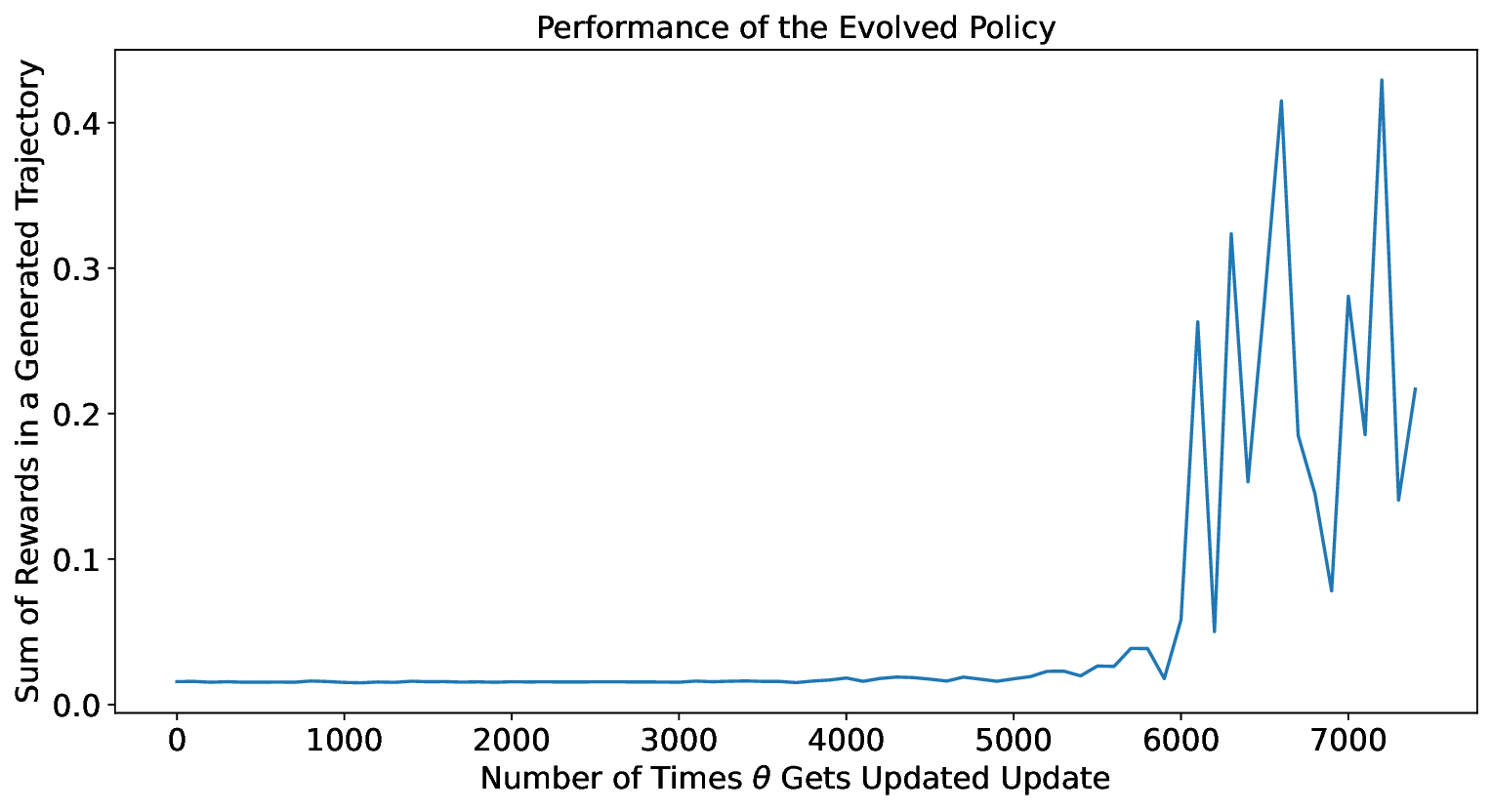}
       	\caption{Performance of the Evolved Policy $\pi_{\theta}$.}
\label{example2-log}
\end{figure}

As with previous experiments, with the trained $\pi_{\bm{\theta}}$, we generate $L=1000$ trajectories of length $T=10$, each starting from the fixed initial state $[50,50,...,50]$, and take the last state $\bm{x}_{l,T-1}$ in each trajectory as an estimate of $\mathbf{x}_e$. The mean $\bar{\mathbf{x}}_{T-1}$ of these $L$ estimates and the estimated $99\%$ confidence interval obtained by bootstrapping from these estimates are reported in Table \ref{chromatography-table}. From the table, it can be observed that the entries of $\bar{\mathbf{x}}_{T-1}$ are close to $\mathbf{x}_e$, except entries 2, 4, and 5. These three entries in $\mathbf{x}_e$ are beyond the estimated 99\% confidence interval. This occurs because this chromatography problem has multiple solutions, and in this experiment REINFORCE-OPT finds one different from $\mathbf{x}_e$.

The forward signals, $f(\mathbf{x}_e)$ and $f(\bar{\mathbf{x}}_{T-1})$ (plotted in Figure \ref{example2}), are close to each other with an $R^2$ between them as high as $91.8\%$, indicating that REINFORCE-IP is efficient for this experiment.

\begin{table}[!htb]
\caption{Estimations by REINFORCE-IP. $\bar{\mathbf{x}}_{T-1}$ represents the mean of $\mathbf{x}_e$-estimates. The row of CI-LB stores the lower bounds of the 99\% confidence interval, and the row of CI-UP stores the upper bounds of the 99\% confidence interval. Column $i$ corresponds to the $i^{th}$ entry, for $i\in \{1,2,...,8 \}$.}
\label{chromatography-table}
\centering%
\begin{tabular}{ p{1.8cm} p{0.8cm} p{0.8cm}  p{0.8cm} p{0.8cm} p{0.8cm} p{0.8cm} p{0.8cm} p{0.8cm} }
\toprule
& 1 & 2 & 3& 4& 5& 6& 7 & 8 \\
\toprule
$\mathbf{x}_e$ & 50.0 & 43.0 & 48.0 & 40.0 & 59.0 & 50.0 &51.0 &49.0 \\
\toprule
$\bar{\mathbf{x}}_{T-1}$ & 48.1 & 49.7 & 48.6 & 50.0 & 55.3 & 48.9 &54.2 &50.3 \\
\toprule
99\%CI-LB & 42.5 & 43.8 & 42.9 & 43.9 & 49.8 & 42.9 &49.0 &44.4 \\
\toprule
99\%CI-UB & 53.6 & 55.8 & 54.7 & 56.6 & 61.1 & 54.7 &59.6 &56.0 \\
\bottomrule
\end{tabular}
\end{table}

\begin{figure}[htb]
	\centering
	\includegraphics[scale=.6]{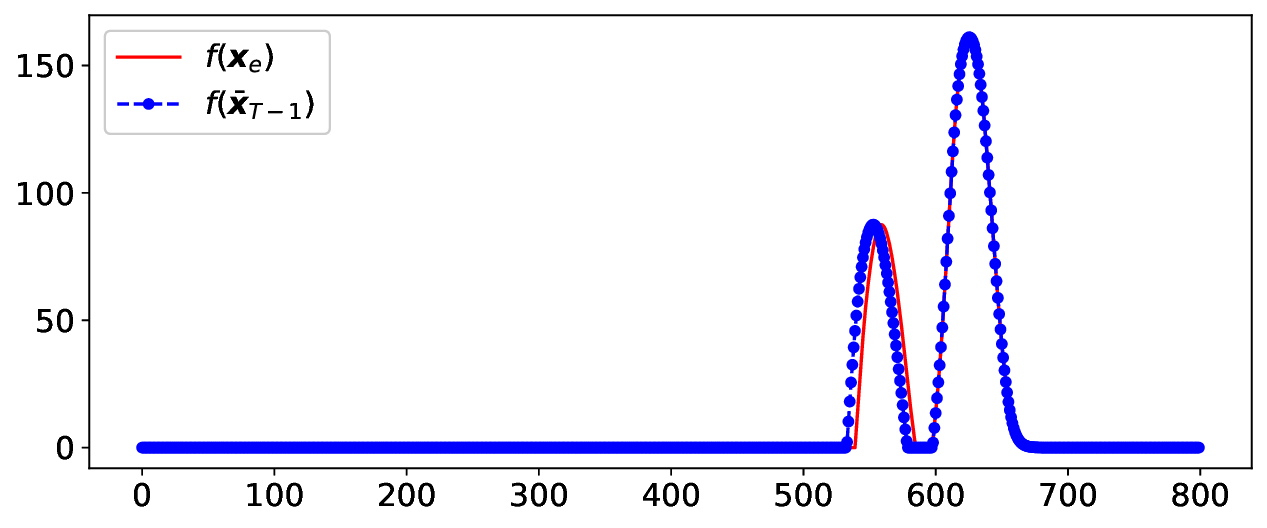}
       	\caption{The Estimations and True Values}
\label{example2}
\end{figure}

\section{Conclusion and Outlook}
\label{Conclusion}

In this work, we develop a new method called REINFORCE-OPT to solve general optimization problems, and we formally introduce this RL-based approach to the field of inverse problems. Our theoretical findings provide a deeper understanding of RL-based optimization methods and their connection to classical regularization techniques for solving general inverse problems. Numerical experiments demonstrate that REINFORCE-OPT is capable of escaping local optima in the solution space (see Figures \ref{escape-local}, \ref{loss-traj}, and \ref{escape-local2D}), is robust to the choice of initial solution candidates (see Tables \ref{compare-results} and \ref{robustk6}), and can quantify uncertainty in error estimates while identifying multiple solutions in ill-posed inverse problems (see Figure \ref{example1-2obs}).

The developed REINFORCE-OPT can also be applied readily to discrete optimization problems by selecting appropriate state and action spaces. Given its success with continuous optimization, we believe that it is well-suited to handle discrete optimization problems involving large state spaces, a challenge for traditional methods.

It should be noted that REINFORCE is one of the early RL algorithms to use function approximators such as $\pi_{\bm{\theta}}$, making it powerful when dealing with infinite state spaces. However, its reliance on a Monte Carlo mechanism leads to high variance in the estimate $\hat{\nabla} J_{\bm{\theta}}$, resulting in a slow learning process, even with the use of a baseline \cite[Section 13.4 and 13.5]{SuttonBarto2018}. This limitation is addressed by the more advanced actor-critic methods \cite{BorkarKonda1997,KondaBorkar1999,KondaTsitsiklis1999,KondaTsitsiklis2003,XuYang2021,HongWai2023,SuttonBarto2018}, where the ``actor'' refers to $\pi_{\bm{\theta}}$ and the ``critic'' refers to a function approximator for $J_T(\bm{\theta})$. To our knowledge, the performance of these actor-critic methods for general inverse problems of the type in (\ref{inversep}) remains unexplored and is a topic for our future investigation. 

\section*{Acknowledgements}
This work was supported by the National Key Research and Development Program (2022YF- C3310300),  the National Natural Science Foundation of China (12171036, 12471295 and 12426306),  the Beijing Foundation for Natural Sciences (Z210001).

\section*{Appendices}

\subsection*{Appendix A Auxiliary Lemmas}	
\begin{lemma}
\label{Gronwal}
\emph{(Gronwall inequality, \cite[Lemma B.1]{Borkar2022})} For continuous $u(\cdot), v(\cdot) \geq 0$ and scalars $C, K, T \geq 0$, the condition
\begin{equation*}
u(t) \leq C+K \int_0^t u(s) v(s) d s \quad \text{for all } t \in[0, T]
\end{equation*}
implies
$$
u(t) \leq C e^{K \int_0^T v(s) d s}, ~ t \in[0, T].
$$
\end{lemma}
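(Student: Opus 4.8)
The plan is to convert the stated integral inequality into a differential inequality for a suitable majorant and then integrate. First I would fix an arbitrary $\varepsilon>0$ and set $w_\varepsilon(t):=C+\varepsilon+K\int_0^t u(s)v(s)\,ds$. Since $u$ and $v$ are continuous on $[0,T]$, the product $uv$ is continuous, so $w_\varepsilon$ is continuously differentiable with $w_\varepsilon'(t)=Ku(t)v(t)$; moreover $w_\varepsilon(t)\geq\varepsilon>0$ for every $t\in[0,T]$. The hypothesis $u(t)\leq C+K\int_0^t u(s)v(s)\,ds$ together with $C\leq C+\varepsilon$ immediately gives $u(t)\leq w_\varepsilon(t)$ on $[0,T]$.

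Next I would exploit the strict positivity of $w_\varepsilon$ and the sign conditions $v\geq0$, $K\geq0$. For $t\in[0,T]$ one has $\frac{d}{dt}\ln w_\varepsilon(t)=\frac{w_\varepsilon'(t)}{w_\varepsilon(t)}=\frac{Ku(t)v(t)}{w_\varepsilon(t)}\leq\frac{Kw_\varepsilon(t)v(t)}{w_\varepsilon(t)}=Kv(t)$, where the inequality uses $u(t)\leq w_\varepsilon(t)$. Integrating from $0$ to $t$ and using $w_\varepsilon(0)=C+\varepsilon$ yields $\ln w_\varepsilon(t)-\ln(C+\varepsilon)\leq K\int_0^t v(s)\,ds$, hence $w_\varepsilon(t)\leq(C+\varepsilon)\exp\!\big(K\int_0^t v(s)\,ds\big)$. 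Since $v\geq0$ we have $\int_0^t v(s)\,ds\leq\int_0^T v(s)\,ds$, so $u(t)\leq w_\varepsilon(t)\leq(C+\varepsilon)\exp\!\big(K\int_0^T v(s)\,ds\big)$ for all $t\in[0,T]$. Letting $\varepsilon\downarrow0$ gives the asserted bound $u(t)\leq C\exp\!\big(K\int_0^T v(s)\,ds\big)$.

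The only genuinely delicate point is the degenerate case $C=0$ (and, relatedly, the fact that without the $\varepsilon$-shift the majorant $C+K\int_0^t uv$ may vanish on an initial interval, making $\ln$ of it ill-defined); the $\varepsilon$-regularization handles this uniformly and vanishes in the limit, so I expect this to be the main obstacle even though it is a minor one. An alternative route that sidesteps logarithms is Picard-style iteration: substituting the hypothesis into itself $n$ times and invoking the standard ordered-simplex identity $\int_0^t\!\int_0^{s_1}\!\cdots\!\int_0^{s_{k-1}}v(s_1)\cdots v(s_k)\,ds_k\cdots ds_1=\tfrac1{k!}\big(\int_0^t v(s)\,ds\big)^k$ gives $u(t)\leq C\sum_{k=0}^{n-1}\tfrac1{k!}\big(K\int_0^t v\big)^k+R_n(t)$ with a remainder satisfying $|R_n(t)|\leq\big(\sup_{[0,T]}u\big)\tfrac1{n!}\big(K\int_0^T v\big)^n\to0$ (the supremum being finite by continuity of $u$ on the compact interval); passing to the limit recovers the exponential series. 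This second approach requires more bookkeeping of the iterated integrals, so I would present the first, which is essentially immediate once the $\varepsilon$-shift is in place.
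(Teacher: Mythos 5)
Your proof is correct. Note that the paper itself gives no proof of this lemma: it is stated verbatim as a citation of \cite[Lemma B.1]{Borkar2022}, so there is no in-paper argument to compare against. Your argument is the classical one — differentiate the (strictly positive, $\varepsilon$-shifted) majorant $w_\varepsilon(t)=C+\varepsilon+K\int_0^t uv$, bound $\frac{d}{dt}\ln w_\varepsilon\le Kv$ using $u\le w_\varepsilon$ and $Kv\ge 0$, integrate, and let $\varepsilon\downarrow 0$ — and every step checks out: the hypotheses $u,v\ge 0$, $K\ge 0$ are used exactly where needed (positivity of $w_\varepsilon$, monotonicity $\int_0^t v\le\int_0^T v$, and preservation of the inequality when multiplying by $Kv$), and the $\varepsilon$-regularization correctly disposes of the degenerate case $C=0$. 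As a side benefit, your proof also makes transparent the paper's subsequent remark that the conclusion survives when $C$ depends on $T$, since $C$ enters only as a multiplicative constant for the fixed interval $[0,T]$. The Picard-iteration alternative you sketch is likewise sound but, as you say, unnecessary here.
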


From the proof given in \cite[Lemma B.1]{Borkar2022}), Lemma \ref{Gronwal} still holds when $C$ is dependent on $T$.

\begin{lemma}
\label{DiscreteGronwal}
\emph{(Discrete Gronwall Inequality, \cite[Lemma B.2]{Borkar2022})}  Let $\left\{x_n, n \geq 0\right\}$ (resp. $\left\{a_n, n \geq 0\right\}$) be a non-negative (resp. positive) sequence and $C, L \geq 0$ be scalars such that, for all $n$,	
		\begin{equation*}
			x_{n+1} \leq C+L\left(\sum_{m=0}^n a_m x_m\right).
		\end{equation*}
Then, $ x_{n+1} \leq C e^{L s_n}$, where $s_n=\sum_{m=0}^n a_m$ for each non-negative integer $n$.
\end{lemma}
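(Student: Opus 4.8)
The plan is a straightforward induction on $n$ that converts the cumulative-sum inequality into a product bound and then into the exponential. Introduce the partial products $P_0:=1$ and $P_n:=\prod_{m=0}^{n-1}(1+La_m)$ for $n\geq 1$. Two elementary facts drive everything: the telescoping identity $P_{m+1}-P_m=La_mP_m$ (immediate from $P_{m+1}=P_m(1+La_m)$), and the scalar inequality $1+t\leq e^t$ valid for $t\geq 0$.

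First I would dispatch the base case: the degenerate (empty-sum) instance of the hypothesis gives $x_0\leq C=CP_0$. In the paper's use of this lemma (Eq.~(\ref{diff-bound}), with $C=K_n$, $L=L_g$) the corresponding ``$x_0$'' is $\lVert\bar{\bm{\theta}}(s_n)-\bm{\theta}^{s_n}(s_n)\rVert=0\leq K_n$, so the base case is trivially available there. For the inductive step, assume $x_k\leq CP_k$ for all $k\leq n$. Substituting this into the hypothesis and using $a_m>0$, $L\geq 0$,
\begin{align*}
x_{n+1} &\leq C+L\sum_{m=0}^{n}a_m x_m \leq C+C\sum_{m=0}^{n}La_mP_m \\
&= C+C\sum_{m=0}^{n}\bigl(P_{m+1}-P_m\bigr) = C+C\bigl(P_{n+1}-P_0\bigr) = CP_{n+1},
\end{align*}
where the inner sum telescopes. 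This closes the induction, so $x_{n+1}\leq CP_{n+1}$ for every $n\geq 0$.

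Finally I would pass from the product to the exponential: since $1+La_m\leq e^{La_m}$,
\begin{equation*}
P_{n+1}=\prod_{m=0}^{n}(1+La_m)\leq\prod_{m=0}^{n}e^{La_m}=\exp\!\Bigl(L\sum_{m=0}^{n}a_m\Bigr)=e^{Ls_n},
\end{equation*}
so that $x_{n+1}\leq CP_{n+1}\leq Ce^{Ls_n}$, which is the asserted bound.

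There is no genuine obstacle here; the statement is classical and every step is elementary. The only point that deserves a line of care is the base case, namely making explicit that $x_0\leq C$ is available (it is the empty-sum instance of the hypothesis, and in the application it holds because the two trajectories agree at the initial time). If one wished to avoid the induction altogether, an alternative would be to interpolate $\{x_n\}$ and $\{s_n\}$ by a right-continuous step function and invoke the continuous Gronwall inequality (Lemma~\ref{Gronwal}), but the discrete induction above is shorter and self-contained.
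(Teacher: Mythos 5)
Your induction is correct, and it supplies a self-contained argument where the paper gives none: Lemma~\ref{DiscreteGronwal} is simply quoted from \cite[Lemma B.2]{Borkar2022} and never proved in the text, so there is no in-paper derivation to compare against. The telescoping identity $\sum_{m=0}^{n}(P_{m+1}-P_m)=P_{n+1}-P_0$ and the bound $1+La_m\leq e^{La_m}$ are both used correctly, and together they give $x_{n+1}\leq CP_{n+1}\leq Ce^{Ls_n}$ as claimed. The one genuinely delicate point is exactly the one you flag: as literally written the hypothesis ranges over $n\geq 0$, so its weakest instance is $x_1\leq C+La_0x_0$ and it imposes no constraint on $x_0$; without $x_0\leq C$ the conclusion can fail (take $C=0$, $L=a_0=1$, $x_0=x_1=1$: the hypothesis holds but the conclusion would force $x_1\leq 0$). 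So $x_0\leq C$ must be read into the statement as the empty-sum instance, which is how you treat it, and you correctly observe that in the paper's only invocation of the lemma, in~(\ref{diff-bound}), the initial term is $\lVert\bar{\bm{\theta}}(s_n)-\bm{\theta}^{s_n}(s_n)\rVert=0$, so the base case is harmless there. Your closing remark is also apt: one could instead interpolate and appeal to the continuous Gronwall inequality (Lemma~\ref{Gronwal}), but the direct induction is shorter and avoids importing the continuous-time statement.
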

		
		
\begin{lemma}
\label{C3}
\emph{\cite[Theorem C.3]{Borkar2022}}
Suppose $\left\{X_n\right\}_{n=0}^{\infty}$ is a martingale with respect to $\left\{\mathcal{F}_n\right\}$, defined on the probability space $(\Omega, \mathcal{F}, P)$. Also assume that $\mathbb{E}\left[X_n^2\right]<\infty$. Then,
		\begin{itemize}
			\item $\sum_{n=0}^{\infty} \mathbb{E}\left[\left(X_{n+1}-X_n\right)^2 \mid \mathcal{F}_n\right]<+\infty$ implies that, almost surely, $\left\{X_n\right\}_{n=0}^{\infty}$ converges.
			\item  $\sum_{n=0}^{\infty} \mathbb{E}\left[\left(X_{n+1}-X_n\right)^2 \mid \mathcal{F}_n\right]=+\infty$ implies that
			$$
			X_n=o\left(\sum_{m=1}^{n-1} \mathbb{E}\left[\left(X_{m+1}-X_m\right)^2 \mid \mathcal{F}_m\right]\right) \text {, a.s. }
			$$
		\end{itemize}
\end{lemma}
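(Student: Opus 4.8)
The plan is to work with the predictable quadratic variation (the conditional variance process) and reduce both claims to the classical $L^2$-bounded martingale convergence theorem. Write $d_m := X_{m+1} - X_m$ for the martingale differences and set $A_n := \sum_{m=0}^{n-1} \mathbb{E}[d_m^2 \mid \mathcal{F}_m]$, so that $A_n$ is non-decreasing and $\mathcal{F}_{n-1}$-measurable (predictable). By orthogonality of martingale differences one has $\mathbb{E}[X_n^2] = \mathbb{E}[X_0^2] + \sum_{m=0}^{n-1}\mathbb{E}[d_m^2] = \mathbb{E}[X_0^2] + \mathbb{E}[A_n]$, the identity that ties $L^2$-boundedness to the growth of $A_n$ and which uses the standing hypothesis $\mathbb{E}[X_n^2]<\infty$.

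For the first bullet, I would first treat the special case where $A_\infty := \lim_n A_n$ is bounded by a deterministic constant $K$. Then $\sup_n \mathbb{E}[X_n^2] \le \mathbb{E}[X_0^2] + K < \infty$, so $\{X_n\}$ is $L^2$-bounded and converges almost surely by Doob's martingale convergence theorem. To pass to the general hypothesis $A_\infty < \infty$ a.s., I would localize: define the stopping times $\tau_K := \inf\{n \ge 0 : A_{n+1} > K\}$, which are genuine stopping times because $A$ is predictable and non-decreasing, so $\{\tau_K \le n\} = \{A_{n+1} > K\} \in \mathcal{F}_n$. The stopped process $X_{n \wedge \tau_K}$ is a martingale whose conditional variance process is $A_{n \wedge \tau_K} \le K$, hence it converges a.s. by the bounded case. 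Since $\{\tau_K = \infty\} = \{A_\infty \le K\}$ and therefore $\bigcup_K \{\tau_K = \infty\} = \{A_\infty < \infty\}$, and since on $\{\tau_K = \infty\}$ one has $X_n = X_{n \wedge \tau_K}$ for every $n$, almost sure convergence of $X_n$ follows on $\{A_\infty < \infty\}$, which is all of $\Omega$ up to null sets under the hypothesis.

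For the second bullet, I would prove the martingale analogue of the strong law via a martingale transform combined with Kronecker's lemma. Because $A_{m+1}$ is $\mathcal{F}_m$-measurable, the normalized partial sums $M_n := \sum_{m=0}^{n-1} d_m/(1 + A_{m+1})$ form a martingale; its conditional variance process is $\sum_{m=0}^{n-1} (A_{m+1} - A_m)/(1+A_{m+1})^2$, and the elementary bound $(A_{m+1}-A_m)/(1+A_{m+1})^2 \le \int_{A_m}^{A_{m+1}} (1+a)^{-2}\,da$ telescopes to show this process is bounded by $\int_0^\infty (1+a)^{-2}\,da = 1 < \infty$ almost surely. The first bullet then gives that $M_n$ converges a.s. Applying Kronecker's lemma with the non-decreasing weights $c_m = 1 + A_{m+1} \to \infty$ (divergent precisely because $A_\infty = \infty$ a.s.) yields $(1+A_n)^{-1}(X_n - X_0) \to 0$; since $A_n \to \infty$ absorbs the $X_0$ term, this is exactly $X_n = o(A_n)$, which is the stated conclusion.

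I expect the main obstacle to be the second bullet: one has to manufacture the right martingale transform — dividing each increment by $1 + A_{m+1}$ — so that its predictable quadratic variation is summable, and then invoke Kronecker's lemma correctly with random but predictable weights. A secondary subtlety, present already in the first bullet, is that almost-sure finiteness of $A_\infty$ does not imply $\mathbb{E}[A_\infty] < \infty$, so the stopping-time localization is genuinely needed rather than a direct $L^2$ estimate.
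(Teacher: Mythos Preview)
Your argument is correct and is in fact the standard route to this result: localize via the predictable increasing process $A_n$ to reduce the first bullet to Doob's $L^2$-bounded martingale convergence theorem, and for the second bullet form the martingale transform with weights $(1+A_{m+1})^{-1}$, bound its bracket by a telescoping integral, and finish with Kronecker's lemma. The only point worth noting is that the paper does not supply its own proof of this lemma at all---it is quoted verbatim as \cite[Theorem~C.3]{Borkar2022} in Appendix~1 and used as a black box in Step~6 of the proof of Lemma~\ref{Lemma2.1}---so there is no ``paper's proof'' to compare against; what you have written is essentially the proof one finds in Borkar's appendix or in standard treatments such as Neveu or Hall--Heyde.
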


\subsection*{Appendix B Proof of the Implication in Step 1 of the Proof of Lemma \ref{Lemma2.1}}

The following lemma will be used to prove the implication stated in Step 1 of Lemma \ref{Lemma2.1}'s proof.

\textbf{Lemma A.1} \emph{ Consider the ODE $\dot{x}(t) = f(x(t))$, where $f:\mathbb{R}^d\rightarrow \mathbb{R}^d$ is Lipschitz with the Lipschitz constant $L_f>0$. Assume that $x_1(\cdot)$ and $x_2(\cdot)$ are two solutions to the ODE. Then, for any $\Delta>0$ and any $t\in [0,\Delta]$, we have
$$ \lVert x_1 (t)-x_2 (t) \rVert \leq \lVert x_1 (0)-x_2 (0)\rVert e^{L_f\Delta}. $$}

\begin{proof}
Note that
\begin{align*}
\lVert x_1 (t)-x_2 (t) \rVert &= \lVert \int_0^t[f(x_1(s)) - f(x_2(s)) ]ds + (x_1(0)-x_2(0)) \rVert\\
&\leq   \int_0^t \lVert f(x_1(s)) - f(x_2(s)) \rVert ds +  \lVert x_1(0)-x_2(0) \rVert\\
&\leq   L_f\int_0^t \lVert x_1(s) - x_2(s) \rVert ds +  \lVert x_1(0)-x_2(0) \rVert.
\end{align*}
Hence, using the Gronwall inequality (see Lemma \ref{Gronwal} in Appendix A), we have for any $t\in [0,\Delta]$ that
$ \lVert x_1 (t)-x_2 (t) \rVert \leq \lVert x_1(0) - x_2(0)  \rVert e^{L_f\Delta}. $  
\end{proof}

\vskip 0.1in

\textbf{Proof of the implication in Step 1}. Suppose that Eq. (\ref{Lemma2.1a}) holds. Our goal is to prove Eq. (\ref{Lemma2.1-eq}), namely that, for any fixed $\Delta>0$,
\begin{equation*}
\lim_{s\rightarrow+\infty} \left( \sup_{t\in[s,s+\Delta]}\lVert \bar{\bm{\theta}}(t) - \bm{\theta}^{s}(t) \rVert  \right) = 0, \quad a.s.
\end{equation*}
We follow the steps (L1) to (L3) below to accomplish this proof.

\textbf{Step (L1)} We fisrt show that
$$\lim_{n\rightarrow+\infty} \left( \sup_{t\in[s_n,s_n+\Delta]}\lVert \bar{\bm{\theta}}(t) - \bm{\theta}^{s_n}(t) \rVert  \right) = 0.$$
 Indeed, from Eq. (\ref{Lemma2.1a}), we have
\begin{equation*}
\lim_{n\rightarrow+\infty} \left( \sup_{t\in[s_n,[s_n+\Delta+1]^-]}\lVert \bar{\bm{\theta}}(t) - \bm{\theta}^{s_n}(t) \rVert  \right) = 0, \quad \text{for any fixed } \Delta>0.
\end{equation*} 
To finish Step (L1), we need to show that, when $n$ is large enough,
\begin{equation}
\label{L4-2}
s_n+\Delta\leq[s_n+\Delta+1]^-,
\end{equation}
which further implies
\begin{equation*}
\label{L4-3}
\lim_{n\rightarrow+\infty} \left( \sup_{t\in[s_n,s_n+\Delta]}\lVert \bar{\bm{\theta}}(t) - \bm{\theta}^{s_n}(t) \rVert  \right) 
\leq \lim_{n\rightarrow+\infty} \left( \sup_{t\in[s_n,[s_n+\Delta+1]^-]}\lVert \bar{\bm{\theta}}(t) - \bm{\theta}^{s_n}(t) \rVert  \right) = 0.
\end{equation*}
From Point 3 in Assumption \ref{Assump0}, $\lim_{n\rightarrow +\infty} a_n =0$. Therefore, there exists $N>0$ such that, whenever $n>N$, $a_n<1$. Now, suppose $n>N$ and let $k$ denote the positive integer such that $s_k:= [s_n+\Delta+1]^-$. Then,
$$
s_n+\Delta+1-[s_n+\Delta+1]^- \leq [s_n+\Delta+1]^+-[s_n+\Delta+1]^- = s_{k+1}-s_k = a_k.
$$
Subtracting the left and right end of the above expression by 1 gives $s_n+\Delta-[s_n+\Delta+1]^- \leq a_k-1 \leq 0$, where the last inequality is because $k\geq n>N$ (according to the definition of $s_k$), and hence $-1+a_k<0$. This proves (\ref{L4-2}) and finishes the proof for the equality in Step (L1).

\textbf{Step (L2).} We prove that, when $s$ is large enough,
\begin{equation*}
\begin{split}
\sup_{t\in[s,s+\Delta]}\lVert \bar{\bm{\theta}}(t) - \bm{\theta}^{s}(t) \rVert \leq \sup_{t\in[[s]^-,[s]^-+\Delta+1]}\lVert \bar{\bm{\theta}}(t) - \bm{\theta}^{[s]^-}(t) \rVert(1+e^{L_g(\Delta+1)}).
\end{split}
\end{equation*}
Recall that $\lim_{n\rightarrow +\infty} a_n =0$. Let $N>0$ denote the threshold such that, whenever $n\geq N$, $a_n<1$. For any $s\geq s_N$, let $k$ denote the non-negative integer such that $[s]^- = s_k$. Then, $k\geq N$ and
\begin{equation*}
s-[s]^-\leq [s]^+-[s]^-=s_{k+1}-s_{k}=a_k<1,
\end{equation*}
which implies
\begin{equation}
\label{s-size}
s \leq [s]^-+ 1
\Rightarrow s+\Delta \leq [s]^-+ \Delta+1.
\end{equation}
This result is used to derive the second inequality of the following:
\begin{equation}
\begin{split}
\sup_{t\in[s,s+\Delta]}\lVert \bar{\bm{\theta}}(t) - \bm{\theta}^{s}(t) \|
&\leq \sup_{t\in[s,s+\Delta]}\lVert \bar{\bm{\theta}}(t) -\bm{\theta}^{[s]^-}(t) \rVert + \sup_{t\in[s,s+\Delta]}\lVert \bm{\theta}^{[s]^-}(t) -\bm{\theta}^{s}(t) \rVert \\
\label{L1-2}
&\leq \sup_{t\in[[s]^-,[s]^-+\Delta+1]}\lVert \bar{\bm{\theta}}(t) -\bm{\theta}^{[s]^-}(t) \rVert + \sup_{t\in[s,s+\Delta]}\lVert \bm{\theta}^{[s]^-}(t) -\bm{\theta}^{s}(t) \rVert.
\end{split}
\end{equation}
Now, we derive an upper bound for the second sup-term, $\sup_{t\in[s,s+\Delta]}\lVert \bm{\theta}^{[s]^-}(t) -\bm{\theta}^{s}(t) \rVert$. Let $y_{[s]^-}(t):=\bm{\theta}^{[s]^-}(t+s)$ and $y_s (t) := \bm{\theta}^s(t+s)$. Then, $y_{[s]^-}(t)$ and $y_s (t)$ are solutions to ODE (\ref{ODE-f}) since
\begin{align*}
&\dot{y}_{[s]^-}(t) = \dot{\bm{\theta}}^{[s]^-}(t+s) = g(\bm{\theta}^{[s]^-}(t+s)) = g(y_{[s]^-}(t));\\
&\dot{y}_{s}(t) = \dot{\bm{\theta}}^{s}(t+s) = g(\bm{\theta}^{s}(t+s)) = g(y_{s}(t)).
\end{align*}
The definitions of $y_{[s]^-}(t)$ and $y_s (t)$ imply the second line in the following:
\begin{align*}
\sup_{t\in [s,s+\Delta]} \lVert \bm{\theta}^{[s]^-}(t) -\bm{\theta}^{s}(t) \rVert  &=   \sup_{t\in [0,\Delta]} \lVert \bm{\theta}^{[s]^-}(t+s) -\bm{\theta}^{s}(t+s) \rVert\\
&=\sup_{t\in [0,\Delta]} \lVert y_{[s]^-}(t) -y_{s}(t) \rVert\\
&\leq  \lVert y_{[s]^-}(0) -y_{s}(0) \rVert e^{L_g \Delta}, \; \text{by Lemma A.1}\\
&=  \lVert \bm{\theta}^{[s]^-}(s) -\bm{\theta}^{s}(s) \rVert e^{L_g \Delta}\\
&=  \lVert \bm{\theta}^{[s]^-}(s) -\bar{\bm{\theta}}(s) \rVert e^{L_g \Delta},\;\text{By $\bm{\theta}^s$-definition}\\
&\leq \sup_{t\in[[s]^-,[s]^-+\Delta+1]}\lVert \bar{\bm{\theta}}(t) - \bm{\theta}^{[s]^-}(t) \rVert e^{L_g\Delta},
\end{align*}
where the last inequality follows from the fact when $s$ is sufficiently large (i.e., $s\geq s_N$), $s\in [[s]^-,[s]^-+\Delta+1]$ according to (\ref{s-size}). Plugging this result into (\ref{L1-2}) proves Step (L2).

\textbf{Step (L3).} Take the limit of both sides of the inequality in Step (L2):
\begin{equation*}
\lim_{s\rightarrow +\infty} \sup_{t\in[s,s+\Delta]}\lVert \bm{\theta}^{[s]^-}(t) -\bm{\theta}^{s}(t) \rVert \leq \lim_{s\rightarrow +\infty}
\sup_{t\in[[s]^-,[s]^-+\Delta+1]}\lVert \bar{\bm{\theta}}(t) - \bm{\theta}^{[s]^-}(t) \rVert(1+e^{L_g(\Delta+1)})=0,
\end{equation*}
where the equality is in accordance with Step (L1).
Thus the whole proof of the implication in Step 1 of Lemma \ref{Lemma2.1}'s proof.

\subsection*{Appendix C Existence of A Stationary Distribution of Markov Chain}
Let $\{X_k\}_{k=0}^\infty$ be a Markov chain defined on a measurable space $(\textbf{X},\mathcal{X})$, and let $P$ denote its Markov kernel. The invariant distribution of $\{X_k\}$ (also called a stationary or a normalized invariant measure), if it exists, refers to a probability measure $\mu$ on $(\textbf{X},\mathcal{X})$ satisfying
$$\mu P=\mu\text{ and } \mu(\textbf{X})=1.$$
Sufficient but possibly unnecessary conditions for the existence of a unique invariant distribution can be found in \cite[Section 2.1.2, H2.1.8]{Douc2018} and are copied below, which requires $\textbf{X}$ to be a complete and separable metric space: 

 (i) \textit{There exists a measurable function} $K:\textbf{Z}\rightarrow \mathbb{R}_+$ \textit{such that, for all} $(x,y,z)\in \textbf{X}\times\textbf{X}\times \textbf{Z}$,
 $d(f(x,z),f(y,z))\leq K(z)d(x,y), $  
$ \mathbb{E}[\max\{\log K(Z_1),0\}]<\infty, $ and $  \mathbb{E}[\log K(Z_1)]<0, $
 where  $d$ \textit{denotes the metric on} \textbf{X},
  $(\textbf{Z},\mathcal{Z})$ \textit{is a measurable space},
  $\{Z_k\}_{k=0}^\infty$ \textit{is a sequence of i.i.d. random variables taking values in} $\textbf{Z}$, 
and  $f:\textbf{X}\times\textbf{Z}\rightarrow \textbf{X}$ \textit{is a $\mathcal{X}\otimes\mathcal{Z}$-$\mathcal{X}$ measurable map such that $X_{k}=f(X_{k-1},Z_k)$. (Note that its existence is guaranteed by the fact that $\mathcal{X}$ is finitely generated since} \textbf{X} \textit{is separable, and by Theorem 1.3.6 in \cite{Douc2018}}).

 (ii) \textit{ There exists }$x_0\in \textbf{X}$ \textit{such that}
$ \mathbb{E}[\log^+d(x_0,f(x_0,Z_1))]<\infty.$

\bibliography{Literature}
\bibliographystyle{abbrv}
\end{document}